\let\today\relax
\def\ps@pprintTitle{%
    \let\@oddhead\@empty
    \let\@evenhead\@empty
    \def\@oddfoot{\footnotesize\itshape
         {Preprint submitted to arXiv} \hfill\today}%
    \let\@evenfoot\@oddfoot
    }
\definecolor{cornell-red}{RGB}{179,27,27}
\newtheorem{theorem}{Theorem}
\newtheorem{lemma}{Lemma}
\newtheorem{proposition}{Proposition}
\newtheorem{corollary}{Corollary}
\theoremstyle{definition}
\newtheorem{definition}{Definition}[section]
\newtheorem{example}{Example}
\newtheorem{remark}{\textbf{Remark}}
\newtheorem{axiom}{Axiom}[section]
\theoremstyle{remark}
\newcommand{\setaxiomtag}[1]{
  \let\oldtheaxiom\theaxiom
  \renewcommand{\theaxiom}{#1}
  \g@addto@macro\endaxiom{
    \addtocounter{axiom}{-1}
    \global\let\theaxiom\oldtheaxiom}
  }
\DeclareMathOperator*{\argmax}{arg\,max}
\DeclareMathOperator*{\argmin}{arg\,min}
\DeclareMathOperator{\dom}{dom}
\DeclareMathOperator{\conv}{conv} 
\DeclareMathOperator{\sgn}{sgn}
\mathchardef\mhyphen="2D 
\newcommand{\N}{\mathbb{N}}
\newcommand{\R}{\mathbb{R}}
\newcommand{\Z}{\mathbb{Z}}
\newcommand{\norm}[1]{\left\lVert#1\right\rVert}
\newcommand{\norms}[1]{\lVert#1\rVert}
\newcommand{\calE}{\mathcal{E}}
\newcommand{\calJ}{\mathcal{J}}
\newcommand{\calL}{\mathcal{L}}
\newcommand{\calP}{\mathcal{P}}
\newcommand{\calS}{\mathcal{S}}
\newcommand{\calU}{\mathcal{U}}
\newcommand{\calV}{\mathcal{V}}
\newcommand{\calW}{\mathcal{W}}
\newcommand{\calX}{\mathcal{X}}
\newcommand{\one}{\bm{1}}
\newcommand{\zero}{\bm{0}}
\newcommand{\tp}{\top}
\newcommand{\ubar}{\overline{u}}
\newcommand{\wt}{\widetilde{w}}
\newcommand{\wbt}{\widetilde{\bm{w}}}
\newcommand{\calWt}{\widetilde{\mathcal{W}}}
\newcommand{\Umax}{U_\textup{max}}
\newcommand{\Umin}{U_\textup{min}}
\newcommand{\rb}{\bm{r}}
\newcommand{\ub}{\bm{u}}
\newcommand{\wb}{\bm{w}}
\newcommand{\xb}{\bm{x}}
\newcommand{\yb}{\bm{y}}
\newcommand{\zb}{\bm{z}}
\newcommand{\bb}{\bm{b}}
\newcommand{\lambdab}{\bm{\lambda}}
\newcommand{\thetab}{\bm{\theta}}
\newcommand{\ubt}{\widetilde{\bm{u}}}
\newcommand{\wbar}{\overline{w}}
\newcommand{\lambdabar}{\overline{\lambda}}
\newcommand{\Rup}{\R_{\uparrow}}
\newcommand{\eb}{\bm{e}}
\newcommand{\ubbar}{\overline{\bm{u}}}
\newcommand{\gt}{\tilde{g}}
\newcommand{\ut}{\tilde{u}}
\begin{document}

\begin{frontmatter}

\title{A Unified Framework for Analyzing and Optimizing a Class of Convex Fairness Measures}

\author[mymainaddress1]{Man Yiu Tsang}
\cortext[cor1]{Corresponding author. }
\ead{mat420@lehigh.edu}
\author[mymainaddress1]{Karmel S.~Shehadeh\corref{cor1}}
\ead{kas720@lehigh.edu}

\address[mymainaddress1]{Department of Industrial and Systems Engineering, Lehigh University, Bethlehem, PA,  USA}

\begin{abstract}

\noindent We propose a new framework that unifies different fairness measures into a general, parameterized class of convex fairness measures suitable for optimization contexts. First, we propose a new class of order-based fairness measures, discuss their properties, and derive an axiomatic characterization for such measures. Then, we introduce the class of convex fairness measures, discuss their properties, and derive an equivalent dual representation of these measures as a robustified order-based fairness measure over their dual sets. Importantly, this dual representation renders a unified mathematical expression and an alternative geometric characterization for convex fairness measures through their dual sets. Moreover, it allows us to develop a unified framework for optimization problems with a convex fairness measure objective or constraint, including unified reformulations and solution methods. In addition, we provide stability results that quantify the impact of employing different convex fairness measures on the optimal value and solution of the resulting fairness-promoting optimization problem. Finally, we present numerical results demonstrating the computational efficiency of our unified framework over traditional ones and illustrating our stability results.

\begin{keyword} 
Fairness in optimization, convexity, decomposition, stability analysis
\end{keyword}

\end{abstract}
\end{frontmatter}

\section{Introduction}\label{sec:intro}
Fairness concerns arise naturally in various decision-making contexts and application domains. Examples of real-life settings where fairness is a concern include healthcare management, transportation, facility location, humanitarian operations, and resource allocation \citep{Bertsimas_et_al:2013, Breugem_et_al:2022, Filippi_et_al:2021, Gutjahr_et_al:2018, Li_et_al:2022, Rahmattalabi_et_al:2022, Sun_et_al:2022, Sun_et_al:2023}. Thus, it is necessary to incorporate fairness measures in optimization models to address these concerns and make fair decisions (i.e., decisions that have an equal impact on the individuals or groups affected by them). 

Like other factors in an optimization context, one can account for fairness by putting related measures in the objective function or constraints. To illustrate, let us consider the following general setting. Suppose we have $N$ subjects of interest, individuals, or groups that are grouped based on some criteria (e.g., location, income, or race). Let $u_i$ represent the impact (or outcome) of a given decision $\xb\in\R^n$ on subject $i \in [N]:=\{1,\dots,N\}$. For example, in a facility location problem, $\xb$ may represent facility location decisions, and $\ub$ may represent customers' unmet demand or travel time to the nearest open facility. Using this notation, we define the following general optimization problem:
 \begin{equation}\label{prob:eff_min}
    \min_{\xb,\,\ub} \Big\{ f(\ub) \,\Big|\, \ub = U(\xb),\, \xb\in\calX\Big\},
\end{equation}
where $\calX\subseteq\R^n$ is the set of constraints on $\xb$ and $U:\R^n\rightarrow\R^N$ is a function for computing the impact of a given decision on the considered subjects. Formulation \eqref{prob:eff_min} seeks to find optimal decisions $\xb \in \calX$ that optimize some classical fairness-neutral objective function $f$ (often called efficiency or inefficiency measure). For example, in the facility location problem, $f$ may represent the sum of the unmet demand or the total travel time to open facilities. Formulation \eqref{prob:eff_min} may produce unfair decisions because it has no measure to ensure that the optimal decisions have fair outcomes. Thus, to promote fairness, the literature typically includes a fairness measure in the objective or constraint of the optimization model. For example, let $\phi:\R^N\rightarrow\R$ represent a fairness measure that gauges the degree of unfairness or inequality.\footnote{\cite{Chen_Hooker:2023}, \cite{Karsu_Morton:2015}, and \cite{Shehadeh_Snyder:2021} have pointed out in their comprehensive surveys that terms like ``fairness and equity,'' ``unfairness and inequity,'' ``fairness and equality,'' and ``unfairness and inequality'' have been used interchangeably in the context of fairness-related optimization. Likewise, in this paper, we adopt the same approach and use these terms interchangeably but note that there can be differences between them when they are formally defined. Furthermore, for simplicity, we define ``unfairness'' as the disparity or inequality in impacts across the considered subjects or groups.} Then, one could incorporate  $\phi$ in the objective function of \eqref{prob:eff_min} as follows:
\begin{equation}\label{prob:FM_eff_min_obj}
    \min_{\xb,\,\ub} \Big\{ f(\ub) + \gamma \phi(\ub) \,\Big|\, \ub = U(\xb),\, \xb\in\calX\Big\},
\end{equation}
for some $\gamma>0$, which controls the trade-off between efficiency and fairness. Alternatively, one could incorporate $\phi$ in the constraint by imposing an upper bound $\eta>0$ on $\phi$ as follows:
\begin{equation}\label{prob:FM_eff_min_constraint}
    \min_{\xb,\,\ub} \Big\{ f(\ub) \,\Big|\, \phi(\ub)\leq \eta,\, \ub = U(\xb),\, \xb\in\calX\Big\}.
\end{equation}

Unfortunately, quantifying fairness is challenging because there is no \textit{best} definition or measure of fairness that is universally accepted. Instead, there is a wide variety of notions and measures to gauge the level of fairness in the economics, decision theory, and operations research literature \citep{Karsu_Morton:2015, Shehadeh_Snyder:2021}. Indeed, different fairness measures may produce remarkably different conclusions (i.e., could yield different solutions with varying impacts on fairness). In addition, different measures have different mathematical expressions and characteristics. Hence, the choice of fairness measure has important consequences for the decisions and tractability of the resulting fairness-promoting optimization models such as \eqref{prob:FM_eff_min_obj} and \eqref{prob:FM_eff_min_constraint}. As such, customized optimization frameworks have been proposed to analyze and incorporate fairness measures in different contexts. To see this, in Table \ref{table:existing_FM}, we present a set of eight widely employed deviation-based fairness measures in the literature.  

\begin{table}[t]  
\footnotesize
\center
\renewcommand{\arraystretch}{1.0}
\caption{Existing deviation-based fairness measures} 
\begin{tabular}{lll}
\Xhline{1.0pt}
Index & Measure & Name \\ \Xhline{0.5pt}
i.    & $\max_{i\in[N]}u_i - \min_{i\in[N]} u_i$ &  Range \\
ii.   & $\sum_{i=1}^N \sum_{j=1}^N |u_i-u_j|$ & Gini deviation  \\
iii.  & $\max_{i\in[N]} \max_{j\in[N]} |u_i - u_j|$ & Maximum pairwise deviation \\
iv.   & $\sum_{i=1}^N |u_i-\ubar|$ & Absolute deviation from mean \\
v.    & $\big[\sum_{i=1}^N (u_i-\ubar)^2\big]^{1/2}$ & Standard deviation\\
vi.   & $\max_{i\in[N]} |u_i-\ubar|$ & Maximum absolute deviation from mean  \\
vii.  & $\max_{i\in[N]} \sum_{j=1}^N |u_i-u_j|$ & Maximum sum of pairwise deviation\\
viii. & $\sum_{i=1}^N \max_{j\in[N]} |u_i-u_j|$ & Sum of maximum pairwise deviation\\
\Xhline{1.0pt}
\end{tabular}\label{table:existing_FM}
\end{table}

We make the following observations about the measures in Table \ref{table:existing_FM}. First, in \ref{appdx:eg_inequity_measures}, we prove that when $N>3$, only measures (i) and (iii) are equivalent, i.e., $\phi^\text{(i)}(\ub)=\phi^\text{(iii)}(\ub)$, and measures (vi) and (vii) are equivalent, i.e., $\phi^\text{(vii)}(\ub)=N\phi^\text{(vi)}(\ub)$. Thus, using either (i) or (iii) in models \eqref{prob:FM_eff_min_obj} and \eqref{prob:FM_eff_min_constraint}  as the fairness measure will yield optimal solutions with the same impact on fairness.  Similarly, replacing $\phi^\text{(vi)}$ by $\phi^\text{(vii)}=N\phi^\text{(vi)}$ in \eqref{prob:FM_eff_min_obj} (resp. in \eqref{prob:FM_eff_min_constraint}) scales $\gamma$ (resp. $\eta$) by a factor $N$. Furthermore, since the remaining pairs of measures are not equivalent, incorporating any of them in \eqref{prob:FM_eff_min_obj} and \eqref{prob:FM_eff_min_constraint} may produce different solutions than other measures with varying impacts on fairness. 
Second, these measures have different mathematical expressions. Hence,  different reformulations and solution techniques may be required to solve the resulting fairness-promoting model that incorporates each. For example, to incorporate measure (i), we can introduce auxiliary variables to linearize the maximum and minimum operators. In contrast, we can introduce conic constraints to reformulate measure (v) or solve the resulting optimization model via non-linear optimization techniques. 

Indeed, decision-makers could benefit greatly from a unified framework that enables them to incorporate fairness measures and assess the trade-offs between different measures within their decision-making contexts. However, despite the subject's importance, few studies have proposed unifying and computationally efficient frameworks for incorporating fairness measures in optimization problems. Moreover, there is a lack of rigorous approaches for analyzing the impact of using different fairness measures on optimal solutions and fairness.

The above challenges inspire this paper’s main contribution: introducing a new framework that unifies different fairness measures into a general, parameterized class of convex fairness measures suitable for optimization contexts. Our theoretical and methodological contributions are summarized as follows.

\begin{itemize}[leftmargin=8mm]
    \item \textit{New order-based fairness measures}. We propose a new class of order-based fairness measures, discuss their properties, and derive an axiomatic characterization for such measures. This class of order-based fairness measures is the building block of our proposed unified framework for analyzing and optimizing convex fairness measures.

    \item \textit{A unified representation of the class of convex fairness measures.} We propose a unified representation of the class of convex fairness measures suitable for optimization contexts. This class includes any convex fairness measure satisfying well-known axioms that fairness measures should satisfy (see Section~\ref{sec:fairness_measures} and Definition~\ref{def:absolute_convex_fairness_measures}). We derive an equivalent dual representation of these measures as a robustified order-based fairness measure over their dual sets. Importantly, this dual representation renders a unified mathematical expression and an alternative geometric characterization for convex fairness measures through their dual sets.  Consequently, it provides a mechanism for investigating the equivalence of convex fairness measures from a geometric perspective. In \ref{appdx:relative_convex_fairness_measures}, we introduce the relative counterpart of the convex fairness measures and discuss its properties.
    
    \item \textit{A unified optimization framework.} Using the dual representation of convex fairness measures, we derive equivalent reformulations of optimization models with a convex fairness measure objective or constraint and develop decomposition methods to solve the reformulations. This unified optimization framework can be employed to solve optimization models with any convex fairness measure. In \ref{appdx:sol_approach_CFM_constraint_rel}, we derive equivalent reformulations of optimization models with relative convex fairness measures.

    \item \textit{Stability analysis.} We conduct stability analyses on the choice of convex fairness measures in optimization models via their dual representations. Specifically, we provide mechanisms to quantify the differences in optimal value and solutions of fairness-promoting optimization problems under two different convex fairness measures. This allows for studying the trade-off between considering different fairness measures in such models.
\end{itemize}

\subsection{Structure of the paper}
The remainder of the paper is organized as follows. In Section~\ref{sec:literature_review}, we review the relevant literature. In Section~\ref{sec:fairness_measures}, we discuss popular and desirable mathematical properties commonly used to study fairness measures that we also adopt to characterize the class of convex fairness measures. In Sections~\ref{sec:order_based_fairness_measures} and \ref{sec:convex_fairness_measures}, we introduce and analyze the class of ordered-based fairness measures and convex fairness measures, respectively. In Section~\ref{sec:opt_w_fairness_measures}, we derive equivalent reformulations of optimization models with a convex fairness measure objective and constraints. In Section~\ref{sec:stability_analysis}, we conduct stability analyses on the choice of convex fairness measures in optimization models. Finally, in Section~\ref{sec:num_experiments}, we present numerical results demonstrating the computational efficiency of our proposed reformulations and solution methods over existing ones using a facility location problem. In addition, we illustrate our stability results using a resource allocation problem.

\subsection{Notation}
For two integers $a$ and $b$ with $a<b$, we define the sets $[a]:=\{1,\dots,a\}$ and $[a,b]_\Z:=\{a,a+1,\dots,b\}$. We use boldface letters to denote vectors. In particular, $\eb_j$ is the $j$th standard unit vector; $\zero$ and $\one$ are vectors of zeros and ones respectively, where the dimension will be clear in the context. For a given vector $\xb$, we denote $x_{(i)}$ as the $i$th smallest entry of $\xb$. For two vectors $\xb\in\R^N$ and $\yb\in\R^N$, we say $\xb$ is majorized by $\yb$, denoted as $\xb\preceq\yb$, if $\sum_{i=1}^N x_i = \sum_{i=1}^N y_i$ and $\sum_{i=k}^N x_{(i)} \leq \sum_{i=k}^N y_{(i)}$ for $k\in[2,N]_\Z$. We use $\Rup^N$ to denote the set of $\R^N$ vectors with entries in ascending order, i.e., $\Rup^N=\{\xb\in\R^N\mid x_1\leq\dots\leq x_N\}$. Finally, we use $\conv(C)$ to denote the convex hull of a set $C$.

\section{Relevant Literature} \label{sec:literature_review}

The importance of fairness has been recognized and well-studied in various settings and research communities for decades. Early efforts and a large body of research later focused on defining appropriate measures to capture fairness and axiomatically characterize these measures \citep{Chakravarty:2007, Cowell_Kuga:1981, Donaldson_Weymark:1980, Foster:1983, Kakwani:1980, Kolm:1976, Thon:1982}. This has led to various notions and measures of fairness, each satisfying a set of axiomatic properties that other measures may not satisfy. We refer to \cite{Andreoli_Zoli:2020}, \cite{Chakravarty:1999}, \cite{Cowell:2000}, and \cite{Jancewicz:2016} for comprehensive surveys on fairness measures and their axiomatic properties. We limit the scope of this review to studies relevant to our paper, specifically those that focus on measuring (un)fairness in optimization contexts.

Assessing how decisions may positively or negatively impact individuals or groups is crucial in many decision-making contexts and application domains \citep{Jagtenberg_Mason:2020, Shen_et_al:2019, Uhde_et_al:2020}. In particular, ensuring fairness in such decision-related impact is essential. Hence, to promote fairness, fairness measures should be incorporated into the objective and/or constraints of optimization problems \citep{Barbati_Piccolo:2016, Bateni_et_al:2022, Diakonikolas_et_al:2020, Filippi_et_al:2021, Lan_et_al:2010, Sun_et_al:2022}. 

While there is a wide range of fairness measures, not all are suitable for optimization contexts. For example, as pointed out by \cite{Chen_Hooker:2023}, some fairness measures are difficult to optimize and pose computational challenges, including the Theil index \citep{Theil:1967} and Atkinson index \citep{Atkinson:1975} proposed in the economics literature. Thus, studies in the operations research literature have developed and employed fairness measures with desired mathematical properties suitable for optimization contexts, including network resource allocation \citep{Lan_et_al:2010}, facility location \citep{Barbati_Piccolo:2016, Marsh_Schilling:1994, Ogryczak:2000}, humanitarian logistics \citep{Donmez_et_al:2022}, and transportation \citep{Lewis_et_al:2021}. In this paper, we adopt a set of widely used axioms in the literature to characterize the class of convex fairness measures (see Section~\ref{sec:fairness_measures}). However, unlike existing studies focusing on a specific context, our unified framework is application-agnostic and can be employed in various optimization contexts.

Next, we briefly review several categories of fairness measures that have been employed in the optimization literature; see \cite{Chen_Hooker:2023}, \cite{Karsu_Morton:2015}, and \cite{Shehadeh_Snyder:2021} for comprehensive surveys. The first category measures the degree of equality in the distribution of utilities (e.g., resources allocated to different entities). Several well-known fairness measures such as relative range, mean deviation, and the Gini index belong to this category \citep{Cowell:2011, Gini:1912}. Another category focuses on disadvantaged entities. This includes the famous Rawlsian principle \citep{Rawls:1999}, which focuses on improving the worst-off entity (e.g., the minimum outcome level). As a result, the Rawlsian approach ignores the outcomes of all other entities. To remedy this issue, the Rawlsian approach is extended to a maximum lexicographic approach which sequentially maximizes the welfare of the worst-off, then the second worst-off, then the third worst-off, and so on \citep{Kostreva_et_al:2004, Ogryczak_Sliwinski:2006}. 

Another stream of the literature proposes new objective functions for optimization models that combine the two competing objectives: efficiency and fairness. The parameter of these new objective functions typically controls the trade-off between efficiency and fairness \citep{Bertsimas_et_al:2012, Hooker_Williams:2012, Mo_Walrand:2000, Williams_Cookson:2000}. Note that these approaches have a specific form of inefficiency measure. Unlike this stream of literature, we propose a unified framework for fairness measures, and we do not attempt to address the trade-off between efficiency and fairness. Nevertheless, decision-makers could use our proposed framework to incorporate convex fairness measures into optimization models with any inefficiency measure.

While these prior investigations have led to numerous tools for the decision-making toolbox, few studies have proposed unifying frameworks for incorporating fairness measures in optimization problems. In this paper, we contribute to the literature with a new framework that unifies different fairness measures into a general, parameterized class of convex fairness measures. As mentioned earlier, this class includes any convex fairness measure satisfying well-known axioms that fairness measures should satisfy (see Definition~\ref{def:absolute_convex_fairness_measures}). For example, it includes the proposed class of order-based fairness measures, some deviation-based measures (e.g., those in Table~\ref{table:existing_FM}), and some envy-based measures (see \ref{appdx:envy_based}), among others. In addition, our framework can be used to address fairness-promoting optimization problems integrating the Rawlsian principle (see \ref{appdx:Rawlsian}). We derive a dual representation of convex fairness measures as a robustified order-based fairness measure over their dual sets. This unified representation allows us to develop a unified framework for optimization problems with a convex fairness measure objective or constraint.

Notably, \cite{Lan_et_al:2010} is the first and, as far as we are aware (see  \citealp{Chen_Hooker:2023} and \citealp{Karsu_Morton:2015}), the only study that also adopts an axiomatic approach to unify a class of \textit{relative} fairness measures in network resource allocation. Our proposed unifying framework of the class of convex fairness measures and \cite{Lan_et_al:2010}'s axiomatic approach characterize different classes of fairness measures; 
 see \ref{appdx:comparison_with_Lan_et_al} for a detailed comparison. In particular,   \cite{Lan_et_al:2010}'s approach is based on the context of resource allocation, considers fairness in a relative sense, and adopts a slightly different set of axioms to define their class of relative fairness measures.  In contrast, our proposed framework is suitable for various optimization contexts. Moreover, \cite{Lan_et_al:2010} did not propose unified reformulations or solution techniques to solve optimization problems incorporating their relative fairness measures.  Finally, \cite{Lan_et_al:2010} did not provide mechanisms to study the stability of fairness-promoting optimization problems.

We close this section by noting that one can measure fairness in an absolute or relative sense. Yet, there is no universal consensus regarding the preference for either absolute or relative fairness measures. It often depends on the context and is a matter of subjective evaluation \citep{Chakravarty:1999}. For example, absolute fairness measures could be more appropriate for fairness-promoting optimization models of the form \eqref{prob:FM_eff_min_obj}. On the other hand, if we use fairness-promoting optimization models of the form \eqref{prob:FM_eff_min_constraint}, both absolute and relative fairness measures could be employed.  In \ref{appdx:relative_convex_fairness_measures}, we introduce the relative counterpart of the convex fairness measures and discuss its properties. Nevertheless, our paper does not focus on choosing between absolute and relative fairness measures, or between convex and order-based fairness measures.

\section{Axioms for Fairness Measures} \label{sec:fairness_measures}

Recall that fairness measures are typically employed in optimization models to gauge the level of fairness associated with a given decision. Therefore, a function  $\phi:\R^N\rightarrow\R$  should satisfy some basic properties as a fairness measure. Indeed, as discussed in Section~\ref{sec:literature_review}, the literature on fairness measures often imposes various conditions (axioms) to characterize fairness measures. In optimization contexts, these conditions include mathematical properties that allow for deriving computationally tractable fairness-promoting models. To lay the foundation for the subsequent technical discussions, in this section, we discuss a set of essential axioms for fairness measures that we adopt (see \citealp{Abul-Naga_Yalcin:2008, Chakravarty:1999, Chakravarty:2007, Chen_Hooker:2023, Karsu_Morton:2015, Lan_et_al:2010, Mussard_Mornet:2019} for further discussions).

\setaxiomtag{C}
\begin{axiom}[Continuity]\label{axiom:continuity}
$\phi$ is continuous on $\R^N$.
\end{axiom}

\cite{Chakravarty:1999} argues that continuity is a minimal condition for a fairness measure. It ensures that a small perturbation of the outcome vector $\ub$ results in a slight change in the value of the fairness measure, which implies that a small change in the decision (of an optimization model) would lead to a small change in the fairness measure. Finally, from a computational viewpoint, optimizing continuous functions is often easier than other functions.

\setaxiomtag{N}
\begin{axiom}[Normalization]\label{axiom:normalization}
$\phi(\ub)\geq 0$ for any $\ub\in\R^N$ and $\phi(\ub)=0$ if and only if $\ub=\alpha \one$ for some $\alpha\in\R$.
\end{axiom}

The normalization property guarantees that the fairness measure is non-negative and only equals zero (perfect equality) when all $u_i$, $i \in [N],$ are identical, i.e., if a decision has the same impact on all the subjects of interest. Thus, a smaller value of $\phi$ implies a fairer distribution of impact on the considered subjects.

\setaxiomtag{S}
\begin{axiom}[Symmetry]\label{axiom:symmetry}
$\phi(\ub)=\phi(P\ub)$ for any $\ub\in\R^N$ and permutation matrix $P$.
\end{axiom}

Axiom~\ref{axiom:symmetry} is also known as the anonymity axiom. It captures the notion that the identity of individuals should not change the perceived degree of fairness.

\setaxiomtag{SCV}
\begin{axiom}[Schur Convexity]\label{axiom:Schur_convex}
$\phi$ is Schur convex, i.e., if $\ub^1\preceq\ub^2$, then $\phi(\ub^1)\leq \phi(\ub^2)$.
\end{axiom}

The Schur convexity property is also known as the Pigou-Dalton condition \citep{Dalton:1920, Moulin:2004}.  This, for example, implies that transferring a positive impact from a better-off subject to a worse-off subject reduces the value of the fairness measure. Mathematically, if $\ub^1\preceq\ub^2$, one can obtain $\ub^1$ from $\ub^2$ by a finite number of Robin Hood transfers (a.k.a. progressive transfers): replacing two entries $u^2_i$ and $u^2_j$ of $\ub^2$ such that $u^2_i<u^2_j$ with $u^2_i+\varepsilon$ and $u^2_j-\varepsilon$, respectively, for some $\varepsilon\in(0,u^2_j-u^2_i)$ \citep{Marshall_et_al:2011}. Therefore, Schur convexity ensures that if $\ub^1$ is fairer than $\ub^2$ (i.e., $\ub^1\preceq\ub^2$), the value of the fairness measure evaluated at $\ub^2$ should be no less than that of $\ub^1$.

\begin{remark}
 Axiom~\ref{axiom:Schur_convex} implies Axiom~\ref{axiom:symmetry} (see Theorem 2.4 of \citealp{Stepniak:2007}), i.e., if a fairness measure $\phi$ is Schur convex, then $\phi$ is also symmetric. 
\end{remark}

\setaxiomtag{TI}
\begin{axiom}[Translation Invariance]\label{axiom:trans_invariance}
$\phi(\ub+\alpha\one)=\phi(\ub)$ for any $\alpha\in\R$.
\end{axiom}

Axiom~\ref{axiom:trans_invariance} guarantees that if we replace each $u_i$ by $u_i+\alpha$, then the value of the \textit{absolute} fairness measure will not change. This is a desirable property because while replacing $u_i$ with $u_i+\alpha$ for all $i\in[N]$ changes the outcome or impact on the subjects, it does not change the degree of inequality across the subjects.

\setaxiomtag{PH}
\begin{axiom}[Positive Homogeneity]\label{axiom:pos_homo}
$\phi(\alpha\ub)=\alpha\phi(\ub)$ for any $\alpha > 0$.
\end{axiom}

Axiom~\ref{axiom:pos_homo} requires that if the vector $\ub$ is multiplied by a factor $\alpha$ (and hence, the difference between two entries of $\ub$ is enlarged by a factor of $\alpha$), the value of $\phi$ also scales with the same factor. This implies that the fairness measure $\phi(\ub)$ has the same measurement unit as $\ub$. Note that this is a desirable property if we use fairness-promoting optimization models of the form \eqref{prob:FM_eff_min_obj}, where the objective is a sum of the inefficiency and fairness measures, since both measures $f(\ub)$ and $\phi(\ub)$ have the same unit \citep{Mostajabdaveh_et_al:2019, Zhang_et_al:2021}.

We close this section by noting that the fairness measures listed in Table~\ref{table:existing_FM} satisfy Axioms~\ref{axiom:continuity}, \ref{axiom:normalization}, \ref{axiom:symmetry}, \ref{axiom:Schur_convex}, \ref{axiom:trans_invariance}, and \ref{axiom:pos_homo} (see \ref{appdx:eg_inequity_measures}).

\section{Order-Based Fairness Measures} 
\label{sec:order_based_fairness_measures}

In this section, we introduce a new class of fairness measures, which we call the order-based fairness measures, and discuss their properties. We shall see that this new class of fairness measures is the building block of our proposed general class of convex fairness measures presented later in Section~\ref{sec:convex_fairness_measures}. First, we formally define order-based fairness measures as follows.

\begin{definition}[Order-Based Fairness Measure]  \label{def:order_based_FM}
Let $\calW=\{\wb\in\R^N\mid \sum_{i=1}^N w_i = 0,\, w_1\leq\dots\leq w_N,\, w_1< 0,\, w_N> 0\}$. A fairness measure $\nu:\R^N\rightarrow \R$ is order-based if there exists $\wb\in\calW$ such that $\nu(\ub)=\nu_{\wb}(\ub)=\sum_{i=1}^N w_i u_{(i)}$.
\end{definition}

By definition, each order-based fairness measure is characterized by a weight vector $\wb\in\calW$, where the weight $w_i$ associated with the $i$th smallest entry $u_{(i)}$ can be considered as the relative priority. Intuitively, Definition \ref{def:order_based_FM} implies that increasing the $i$th smallest outcome $u_{(i)}$ by a small amount $\delta>0$ changes $\nu_{\wb}(\ub)$ to $\nu_{\wb}(\ub)+\delta w_i$. Since $w_i\leq w_j$ when $i<j$, this implies that increasing the $i$th smallest outcome would lead to a smaller increase or larger decrease in $\nu_{\wb}(\ub)$ (depending on the signs of $w_i$ and $w_j$), and thus promoting fairness; see Example~\ref{eg:fair_resource_order_based} in \ref{appdx:eg_order_based}. In addition, we can rewrite $\nu_{\wb}(\ub)=\sum_{j=2}^N \big(\sum_{i=j}^N w_i \big) \big[ u_{(j)}-u_{(j-1)} \big]$. That is, $\nu_{\wb}(\ub)$ is a weighted sum of the differences between two consecutive values $u_{(j)}$ and $u_{(j-1)}$, which is a natural way to measure the variability of the distribution of outcomes.

In Proposition~\ref{prop:order_based_FM_is_FM}, we show that $\nu_{\wb}(\ub)$ satisfies a set of basic axioms discussed in Section~\ref{sec:fairness_measures}.

\begin{proposition} \label{prop:order_based_FM_is_FM}
For any $\wb\in\calW$, the order-based fairness measure $\nu_{\wb}:\R^N\rightarrow\R$ defined as $\nu_{\wb}(\ub)=\sum_{i=1}^N w_i u_{(i)}$ satisfies Axioms~\ref{axiom:continuity}, \ref{axiom:normalization}, \ref{axiom:symmetry}, \ref{axiom:Schur_convex}, \ref{axiom:trans_invariance}, and \ref{axiom:pos_homo}.
\end{proposition}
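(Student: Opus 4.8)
The plan is to verify the six axioms by exploiting the single structural fact that $\nu_{\wb}$ depends on $\ub$ only through its sorted vector $(u_{(1)},\dots,u_{(N)})$, and to reserve the real work for Normalization (Axiom~\ref{axiom:normalization}) and Schur convexity (Axiom~\ref{axiom:Schur_convex}). Symmetry (Axiom~\ref{axiom:symmetry}) is immediate, since permuting the entries of $\ub$ leaves the sorted vector, and hence $\nu_{\wb}$, unchanged. Continuity (Axiom~\ref{axiom:continuity}) follows because each order statistic is continuous in $\ub$: writing $T_k(\ub):=\sum_{i=k}^N u_{(i)}$ as the maximum, over all index sets of size $N-k+1$, of the associated partial sums exhibits $T_k$ as a maximum of finitely many linear maps, so $u_{(k)}=T_k-T_{k+1}$ is continuous. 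For Translation Invariance (Axiom~\ref{axiom:trans_invariance}) I would note that adding $\alpha\one$ shifts every order statistic by $\alpha$, whence $\nu_{\wb}(\ub+\alpha\one)=\nu_{\wb}(\ub)+\alpha\sum_{i=1}^N w_i$, and the sum vanishes by the defining constraint $\sum_i w_i=0$ of $\calW$. Positive Homogeneity (Axiom~\ref{axiom:pos_homo}) is just as direct: for $\alpha>0$ scaling preserves the order, so $(\alpha\ub)_{(i)}=\alpha u_{(i)}$ and the factor $\alpha$ pulls out of the sum.

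For Schur convexity I would work from the summation-by-parts identity $\nu_{\wb}(\ub)=w_1\sum_{i=1}^N u_i+\sum_{k=2}^N (w_k-w_{k-1})\,T_k(\ub)$, which is obtained from $u_{(i)}=T_i-T_{i+1}$ (with $T_{N+1}=0$) and $T_1(\ub)=\sum_i u_i$. The increments $w_k-w_{k-1}$ are nonnegative because $\wb\in\calW$ is nondecreasing. If $\ub^1\preceq\ub^2$, then by the paper's definition of majorization the total sums agree, eliminating the first term's difference, while $T_k(\ub^1)\le T_k(\ub^2)$ for every $k\in[2,N]_\Z$; multiplying these inequalities by the nonnegative coefficients $w_k-w_{k-1}$ and summing gives $\nu_{\wb}(\ub^1)\le\nu_{\wb}(\ub^2)$. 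This matches the majorization definition term-for-term, so no translation between majorization conventions is required.

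For Normalization I would start from the representation already recorded in the text, $\nu_{\wb}(\ub)=\sum_{j=2}^N W_j\,[u_{(j)}-u_{(j-1)}]$ with $W_j:=\sum_{i=j}^N w_i$. Each bracket is nonnegative since $\ub$ is sorted, so nonnegativity of $\nu_{\wb}$ reduces to $W_j\ge 0$, and the direction $\ub=\alpha\one\Rightarrow\nu_{\wb}(\ub)=0$ is trivial. The crux, which I expect to be the main obstacle, is the \emph{strict} positivity $W_j>0$ for all $j\in[2,N]_\Z$: this is exactly what forces every gap $u_{(j)}-u_{(j-1)}$ to vanish when $\nu_{\wb}(\ub)=0$, yielding $\ub=\alpha\one$. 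To obtain it I would pass to the partial sums $S_k:=\sum_{i=1}^k w_i=-W_{k+1}$: since the $w_i$ are nondecreasing, the running averages $S_k/k$ are nondecreasing in $k$ and equal $0$ at $k=N$, so $S_k\le 0$ and $W_j\ge 0$ throughout. Strictness is the one place the endpoint conditions of $\calW$ are genuinely needed: if $S_k=0$ for some $k<N$, then $S_k/k=S_N/N=0$ together with monotonicity of the averages forces $w_{k+1}=\dots=w_N=0$, contradicting $w_N>0$; hence $S_k<0$ and $W_{k+1}>0$. I would present this monotone-average argument (equivalently, the discrete concavity of $j\mapsto W_j$ pinned at $W_1=W_{N+1}=0$) carefully, as it is the delicate step that distinguishes genuine order-based measures from degenerate weightings.
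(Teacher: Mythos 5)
Your proof is correct, and its backbone coincides with the paper's: Symmetry, Translation Invariance, and Positive Homogeneity are handled identically; Normalization rests on the same gap representation $\nu_{\wb}(\ub)=\sum_{j=2}^N W_j\,[u_{(j)}-u_{(j-1)}]$ with $W_j=\sum_{i=j}^N w_i$, and Schur convexity on the same Abel-summation identity $\nu_{\wb}(\ub)=w_1\one^\tp\ub+\sum_{k=2}^N (w_k-w_{k-1})\sum_{i=k}^N u_{(i)}$ paired with the paper's definition of majorization. You diverge, validly, in two local steps. First, for Continuity (Axiom~\ref{axiom:continuity}) the paper proves that sorting is $1$-Lipschitz in the $\ell_\infty$ norm via a contradiction argument on order statistics, yielding the quantitative estimate $|\nu_{\wb}(\ub^1)-\nu_{\wb}(\ub^2)|\leq\norms{\wb}_1\norms{\ub^1-\ub^2}_\infty$; this explicit modulus is not incidental, as it is reused verbatim in the proof of Theorem~\ref{thm:convex_FM_dual} to pass continuity through the supremum over a compact dual set. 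Your route—exhibiting each tail sum $T_k(\ub)=\sum_{i=k}^N u_{(i)}$ as a maximum of finitely many linear maps and writing $u_{(k)}=T_k-T_{k+1}$—is shorter and only qualitative, though it upgrades to a Lipschitz bound with little extra effort if one ever needs it. Second, for the crux $W_j>0$, $j\in[2,N]_\Z$, the paper argues by induction on $j$, splitting on the sign of $w_j$ (using $w_1<0$ in the base case and $w_N>0$ in the inductive step), whereas you use monotonicity of the running averages $S_k/k$ of the nondecreasing sequence $\{w_i\}$, pinned to zero at $k=N$: nonnegativity of $W_j$ falls out immediately, and strictness follows since $S_k=0$ for $k<N$ would force $w_{k+1}=\dots=w_N=0$, contradicting $w_N>0$ (note $w_1<0$ is then redundant, being implied by monotonicity, zero sum, and $w_N>0$). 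Both mechanisms are sound; yours isolates the structural reason (discrete concavity of $j\mapsto W_j$ with $W_1=W_{N+1}=0$) more transparently, while the paper's induction is more elementary and self-contained.
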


Note that the order-based fairness measure resembles the form of the so-called ordered weighted average (OWA) operator \citep{Csiszar:2021, Yager:1988} or the ordered median function \citep{Nickel_Puerto:2006}. In \ref{appdx:comparison_OWA}, we discuss how our order-based fairness measure differs from the OWA operators. Next, in Theorem~\ref{thm:order_based_FM_characterization_I}, we show that $\nu_{\wb}$ is a supremum of linear functions (in $\ub$) over a set of permutations of $\wb$. As we show in Section~\ref{sec:opt_w_fairness_measures}, this characterization facilitates a unified reformulation of fairness-promoting optimization models with order-based fairness measures.

\begin{theorem} \label{thm:order_based_FM_characterization_I}
Let $\Pi$ be the set of permutation functions $\pi:\R^N\rightarrow\R^N$ on $[N]$. For any $\ub\in\R^N$, we have $\nu_{\wb}(\ub)=\sup_{\wbt\in\calWt} \sum_{i=1}^N \wt_i u_i$, where $\calWt=\big\{\wbt\in\R^N\mid \wt_i=w_{\pi(i)},\, i\in[N],\, \pi\in\Pi\big\}$.
\end{theorem}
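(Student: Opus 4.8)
The plan is to recognize the claimed identity as an instance of the classical rearrangement inequality and to establish it by an exchange argument together with an explicit maximizing permutation. First I would observe that $\pi$ ranges over the finite set $\Pi$ of the $N!$ permutations of $[N]$, so $\calWt$ is finite and the supremum is in fact attained at some $\wbt\in\calWt$; this lets me reason with an optimal permutation rather than a limiting sequence, and it lets me verify the two inequalities $\nu_{\wb}(\ub)\le \max_{\wbt\in\calWt}\sum_{i=1}^N \wt_i u_i$ and $\nu_{\wb}(\ub)\ge \max_{\wbt\in\calWt}\sum_{i=1}^N \wt_i u_i$ separately.

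For the inequality $\nu_{\wb}(\ub)\le\max_{\wbt\in\calWt}\sum_{i=1}^N\wt_i u_i$, I would exhibit one permutation that attains the value $\nu_{\wb}(\ub)$. Let $\sigma$ be a permutation sorting $\ub$ in ascending order, so that $u_{\sigma(k)}=u_{(k)}$ for all $k\in[N]$, and take $\pi=\sigma^{-1}$. Then for each original index $i$ of rank $k$ (i.e.\ $i=\sigma(k)$) we have $\pi(i)=k$, hence $\wt_i=w_{\pi(i)}=w_k$ and $u_i=u_{(k)}$, so $\sum_{i=1}^N \wt_i u_i=\sum_{k=1}^N w_k u_{(k)}=\nu_{\wb}(\ub)$. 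Thus the maximum is at least $\nu_{\wb}(\ub)$.

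For the reverse inequality I would show $\sum_{i=1}^N w_{\pi(i)}u_i\le\sum_{i=1}^N w_i u_{(i)}$ for every $\pi\in\Pi$, which is precisely the rearrangement inequality for the fixed ascending sequence $w_1\le\dots\le w_N$ paired against the entries of $\ub$. The standard exchange argument does this: if a permutation $\pi$ is not sorted in the same order as $\ub$, there exist indices $i,j$ with $u_i<u_j$ but $w_{\pi(i)}>w_{\pi(j)}$. Swapping the two assigned weights (replacing $\pi$ by the permutation $\pi'$ that interchanges the values at $i$ and $j$) changes the objective by
\[
\big(w_{\pi(i)}-w_{\pi(j)}\big)\big(u_j-u_i\big)\ge 0,
\]
since both factors are nonnegative. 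Hence such a swap never decreases the objective, and iterating it (bubble-sort fashion, each swap strictly reducing the number of inversions between the weight assignment and the ascending order of $\ub$) terminates at the comonotone assignment, whose value is $\sum_{i=1}^N w_i u_{(i)}=\nu_{\wb}(\ub)$. Therefore every $\pi\in\Pi$ satisfies $\sum_{i=1}^N w_{\pi(i)}u_i\le\nu_{\wb}(\ub)$, giving the upper bound. Combining the two bounds yields the stated equality.

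The only mildly delicate point, and the main thing to get right, is the bookkeeping of the exchange argument: one must confirm that the swaps terminate (via the strictly decreasing inversion count) and that ties among the entries of $\ub$ cause no difficulty — when $u_i=u_j$ the swap leaves the objective unchanged, so several permutations may realize the maximum, but the common value is unaffected. I note that the argument uses only $w_1\le\dots\le w_N$, so the remaining defining conditions of $\calW$ (the zero-sum and sign conditions) play no role in this characterization.
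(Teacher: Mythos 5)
Your proof is correct and rests on the same key fact as the paper's: the rearrangement inequality pairing the ascending weights $w_1\le\dots\le w_N$ comonotonically with the entries of $\ub$. The only difference is one of packaging — the paper cites the inequality from \cite{Marshall_et_al:2011} in a single display, while you prove it from scratch via the exchange/inversion-count argument and make the attainment of the supremum (via $\pi=\sigma^{-1}$) explicit, both of which are sound.
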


Next, we now derive an axiomatic characterization of order-based fairness measures. First, the following axiom provides a practical mechanism to quantify the change in the value of the order-based fairness measure incurred by perturbation in the outcome vector $\ub$.

\setaxiomtag{PA}
\begin{axiom}[Proportional Adjustment] \label{axiom:order_based}
There exist constants $\{w_j\}_{j\in[N]}$ such that $\phi(\ub+\varepsilon\eb_j)=\phi(\ub)+\varepsilon w_j$ for any $\ub\in\Rup^N$, $j\in[N]$, and $\varepsilon\in[0,u_{j+1}-u_j]$, where $[0,u_{N+1}-u_N]=[0,\infty)$.
\end{axiom}

Axiom~\ref{axiom:order_based} has the following practical interpretation. Suppose we increase the $j$th smallest entry $u_{(j)}$ by a small amount $\varepsilon$. Then, the associated change in the value of the order-based fairness measure is proportional to $\varepsilon$. Mathematically, Axiom~\ref{axiom:order_based} implies that $\phi(\ub')=\phi(\ub)+\sum_{j=1}^N w_j (u'_j-u_j)$ for any perturbation $\ub'\in\R^N$ of the vector $\ub\in\R^N$ such that the order of each subject is preserved. Thus, Axiom~\ref{axiom:order_based} characterizes the linearity of the order-based fairness measure with respect to  $(u_{(1)},\dots,u_{(N)})^\tp$. It follows that the effect of a change in the impact $u_j$ of subject $j$ depends only on the order (i.e., rank) of that subject, irrespective of the value of the impact (see \citealp{Mehran:1976} for similar discussions). In Theorem \ref{thm:axiom_order_based}, we provide an axiomatic characterization of order-based fairness measures.

\begin{theorem} \label{thm:axiom_order_based}
The function $\nu:\R^N\rightarrow\R$ is an order-based fairness measure with weight vector $\wb\in\calW$ if and only if $\nu$ satisfies Axioms~\ref{axiom:normalization}, \ref{axiom:Schur_convex}, \ref{axiom:pos_homo}, and \ref{axiom:order_based}.
\end{theorem}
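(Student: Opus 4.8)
The plan is to prove the two directions of the equivalence separately.

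For the \emph{only if} direction, I would take an arbitrary order-based fairness measure $\nu_{\wb}$ with $\wb\in\calW$ and verify each of the four axioms. Axioms~\ref{axiom:normalization}, \ref{axiom:Schur_convex}, and \ref{axiom:pos_homo} follow immediately from Proposition~\ref{prop:order_based_FM_is_FM}, which already establishes that $\nu_{\wb}$ satisfies these (and more). The only new content is Axiom~\ref{axiom:order_based}: for $\ub\in\Rup^N$ (so $u_{(i)}=u_i$) and $\varepsilon\in[0,u_{j+1}-u_j]$, adding $\varepsilon\eb_j$ keeps the vector in ascending order, hence preserves ranks, so $\nu_{\wb}(\ub+\varepsilon\eb_j)=\sum_i w_i(u+\varepsilon\eb_j)_{(i)}=\nu_{\wb}(\ub)+\varepsilon w_j$. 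This direction is routine.

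The substantive direction is \emph{if}. Suppose $\nu$ satisfies Axioms~\ref{axiom:normalization}, \ref{axiom:Schur_convex}, \ref{axiom:pos_homo}, and \ref{axiom:order_based}; I must produce a weight vector $\wb\in\calW$ with $\nu=\nu_{\wb}$. The natural candidate is the constant vector $\{w_j\}_{j\in[N]}$ furnished by Axiom~\ref{axiom:order_based}. The first step is to show $\nu(\ub)=\sum_{j=1}^N w_j u_{(j)}$ for all $\ub\in\Rup^N$; by symmetry (which follows from Schur convexity, per the Remark) this extends to all of $\R^N$. To establish the formula on $\Rup^N$, I would fix $\ub\in\Rup^N$ and build a telescoping path from a reference point to $\ub$ using rank-preserving coordinate increments, applying the displayed consequence $\phi(\ub')=\phi(\ub)+\sum_j w_j(u'_j-u_j)$ of Axiom~\ref{axiom:order_based} (which itself needs a short argument chaining single-coordinate steps, each staying within the allowed interval, so that order is preserved throughout). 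A clean choice is to connect $\ub$ to the point $u_{(1)}\one$: since $u_{(1)}\one$ is the all-equal vector, Axiom~\ref{axiom:normalization} gives $\nu(u_{(1)}\one)=0$, and the path yields $\nu(\ub)=\sum_{j} w_j(u_j-u_{(1)})=\sum_j w_j u_j - u_{(1)}\sum_j w_j$.

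It then remains to verify that $\wb\in\calW$, i.e.\ that $\sum_j w_j=0$, the $w_j$ are nondecreasing, $w_1<0$, and $w_N>0$. The condition $\sum_j w_j=0$ is exactly what forces the residual term $u_{(1)}\sum_j w_j$ above to vanish (equivalently, it follows from combining Axiom~\ref{axiom:pos_homo} or translation-type reasoning with normalization: applying the derived formula to $\alpha\one$ must give $0$ for all $\alpha$). Monotonicity $w_1\le\cdots\le w_N$ should come from Schur convexity applied to a Robin Hood transfer between adjacent ranks: transferring $\varepsilon$ from rank $j+1$ to rank $j$ decreases $\nu$ by $\varepsilon(w_{j+1}-w_j)$, and Schur convexity forces this to be nonnegative. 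Finally, the strict inequalities $w_1<0$ and $w_N>0$ follow from Axiom~\ref{axiom:normalization}: if, say, $w_1=0$, then together with monotonicity and $\sum_j w_j=0$ all weights would vanish, making $\nu\equiv 0$ and violating the ``only if $\ub=\alpha\one$'' clause. I expect the main obstacle to be the careful bookkeeping in extending the single-step Axiom~\ref{axiom:order_based} to a global linear formula while ensuring every intermediate point of the chosen path remains in $\Rup^N$ (so that ranks never reshuffle and the per-coordinate increments stay within their admissible intervals); handling coordinates that are tied requires a little extra care in ordering the increments.
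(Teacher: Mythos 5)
Your overall architecture matches the paper's proof: the forward direction is handled identically (Proposition~\ref{prop:order_based_FM_is_FM} plus a direct check of Axiom~\ref{axiom:order_based}), and in the backward direction both you and the paper chain single-coordinate applications of Axiom~\ref{axiom:order_based} to obtain a linear formula on $\Rup^N$, use Schur convexity on an adjacent-rank Robin Hood transfer to get $w_1\le\cdots\le w_N$, use Axiom~\ref{axiom:normalization} for the strict signs, and extend from $\Rup^N$ to $\R^N$ by symmetry. However, there is one genuine flaw: your justification of $\sum_j w_j=0$ does not work as stated. Your derived formula is $\nu(\ub)=\sum_j w_j u_j - u_{(1)}\sum_j w_j$, and plugging $\ub=\alpha\one$ into it gives $\alpha\sum_j w_j-\alpha\sum_j w_j=0$ identically; this holds for \emph{every} value of $\sum_j w_j$, so it pins down nothing. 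The root cause is your choice of path: in going from $u_{(1)}\one$ up to $\ub$ you only ever increment coordinates $2,\dots,N$ (the smallest coordinate stays fixed), so the constraint that Axiom~\ref{axiom:order_based} imposes at $j=1$ --- that raising the smallest entry changes $\nu$ at rate $w_1$ --- never enters your derivation, and without it the residual term cannot be eliminated.

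The fix is short. Chain Axiom~\ref{axiom:order_based} along \emph{all} $N$ coordinates to go from $\zero$ to $\alpha\one$ (add $\alpha$ to coordinate $N$, then $N-1$, \dots, then $1$; each step stays in $\Rup^N$ and within its admissible interval), which yields $\nu(\alpha\one)=\nu(\zero)+\alpha\sum_j w_j$; Axiom~\ref{axiom:normalization} gives $\nu(\alpha\one)=\nu(\zero)=0$, hence $\sum_j w_j=0$. (Equivalently: apply Axiom~\ref{axiom:order_based} at $j=1$ to a strictly increasing vector and compare with your formula.) It is worth seeing how the paper sidesteps this issue by different bookkeeping: it defines the candidate weights as telescoping differences $w'_j=\nu(\ubbar^j)-\nu(\ubbar^{j+1})$ of the values of $\nu$ at the tail vectors $\ubbar^j=\sum_{i=j}^N\eb_i$, so that $\sum_j w'_j=\nu(\ubbar^1)=\nu(\one)=0$ falls out of normalization automatically, and only at the end identifies $w'_j=w_j$. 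With the correction above, the rest of your argument goes through; in the strict-sign step you should also record the (easy) companion case that $w_1>0$ is impossible because monotonicity would then force $\sum_j w_j>0$.
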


Among the set of popular measures listed in Table~\ref{table:existing_FM}, only measures (i)--(iii) are order-based (see Remark~\ref{rem:eg_order-based}), where measure (i) is equivalent to measure (iii) (see \ref{appdx:eg_inequity_measures}). In Examples~\ref{eg:axiom_characterization_range}--\ref{eg:axiom_characterization_Gini_deviation} in \ref{appdx:eg_order_based}, we show how one can apply Theorem~\ref{thm:axiom_order_based} to provide axiomatic characterizations of these order-based fairness measures, namely (i) and (ii).

\section{Convex Fairness Measures}  \label{sec:convex_fairness_measures}

In this section, we introduce our proposed class of convex fairness measures. We also derive a unified dual representation of these measures. Specifically, we show that one can equivalently represent any convex fairness measure as a robustified order-based fairness measure over a set of weights $\wb$ (Theorem~\ref{thm:convex_FM_dual}). In addition to providing a unified mathematical expression of convex fairness measures, we show that this dual representation provides a mechanism to investigate the equivalence of convex fairness measures from a geometric perspective (Theorem~\ref{thm:characterization_of_equivalence_CFM}). We later use this dual representation in Section~\ref{sec:opt_w_fairness_measures} to propose a unified framework for optimization models with a convex fairness measure. 

First, in Definition~\ref{def:absolute_convex_fairness_measures}, we formally introduce the class of convex fairness measures.

\begin{definition} \label{def:absolute_convex_fairness_measures}
A fairness measure $\nu:\R^N\rightarrow\R$ is a convex fairness measure if it satisfies Axioms~\ref{axiom:continuity}, \ref{axiom:normalization}, \ref{axiom:symmetry}, \ref{axiom:trans_invariance}, \ref{axiom:pos_homo}, and the following Axiom~\ref{axiom:convexity}.
\end{definition}

\setaxiomtag{CV}
\begin{axiom}[Convexity] \label{axiom:convexity}
$\phi\big(\alpha\ub^1+(1-\alpha)\ub^2\big)\leq \alpha\phi(\ub^1)+(1-\alpha)\phi(\ub^2)$ for any $\{\ub^1,\ub^2\}\subset\R^N$ and $\alpha\in(0,1)$.
\end{axiom}

Convexity is a desirable property of fairness measure for use in optimization contexts. In particular, introducing a convex fairness measure to an optimization model does not generally pose an additional optimization burden compared with a non-convex fairness measure.  In addition, as pointed out by \cite{Williamson_Menon:2019}, if a fairness measure is non-convex, its value could be decreased by partitioning the subjects or groups of interests into subgroups, which is counter to what we wish to achieve (see also \citealp{Kolm:1976} for a thorough discussion). Specifically, if the fairness measure $\phi$ satisfies Axioms~\ref{axiom:convexity} and \ref{axiom:pos_homo}, then $\phi$ is subadditive, i.e., $\phi(\ub^1+\ub^2)\leq \phi(\ub^1)+\phi(\ub^2)$. On the other hand, if $\phi$ is not convex, then $\phi$ may not be subadditive, i.e., $\phi(\ub^1+\ub^2)>\phi(\ub^1)+\phi(\ub^2)$. This implies that the fairness measure value would be smaller by dividing subjects into subgroups, which is undesirable. 

\begin{remark}
Axioms~\ref{axiom:convexity} and \ref{axiom:symmetry} imply Axiom~\ref{axiom:Schur_convex} \citep{Marshall_et_al:2011}. Thus,  we did not include  Axiom~\ref{axiom:Schur_convex} in the definition of convex fairness measures (see Definition~\ref{def:absolute_convex_fairness_measures}).
\end{remark}

Next, in Theorem~\ref{thm:convex_FM_dual}, we provide one of the key results of this paper, which shows that any convex fairness measure admits a dual representation as a worst-case order-based fairness measure over its dual set.

\begin{theorem} \label{thm:convex_FM_dual}
Consider a fairness measure $\nu:\R^N\rightarrow\R$, and let $\calS^N=\{\wb\in\Rup^N\mid \one^\tp\wb=0\}$. The following statements are equivalent: (a) $\nu$ is a convex fairness measure; (b) there exists a convex compact set $\calW_\nu\subseteq\calS^N$ with $\calW_\nu\ne\{\zero\}$ such that $\nu(\ub)=\sup_{\wb\in\calW_\nu} \nu_{\wb}(\ub)$; (c) there exists a compact set $\calW_\nu\subseteq\calS^N$ with $\calW_\nu\ne\{\zero\}$ such that $\nu(\ub)=\sup_{\wb\in\calW_\nu} \nu_{\wb}(\ub)$.
\end{theorem}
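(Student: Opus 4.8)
The three conditions are naturally linked in the cycle (a)$\Rightarrow$(b)$\Rightarrow$(c)$\Rightarrow$(a). The implication (b)$\Rightarrow$(c) is immediate, since a convex compact set is in particular compact; the substance lies in the other two implications, and the representation $\nu_{\wb}(\ub)=\max_{\pi}\sum_i w_{\pi(i)}u_i$ from Theorem~\ref{thm:order_based_FM_characterization_I} (the rearrangement inequality) will be the workhorse in both.

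For (c)$\Rightarrow$(a), the plan is to verify each defining axiom directly from $\nu(\ub)=\sup_{\wb\in\calW_\nu}\nu_{\wb}(\ub)$. Symmetry, Translation Invariance, and Positive Homogeneity hold for every single $\nu_{\wb}$ (sorting is permutation-invariant, commutes with shifts by $\alpha\one$ and positive scalings, and $\one^\tp\wb=0$), so they pass to the supremum; Convexity follows since Theorem~\ref{thm:order_based_FM_characterization_I} makes each $\nu_{\wb}$ a supremum of linear functions of $\ub$, hence $\nu$ is a supremum of convex functions, and Continuity then follows because a finite convex function on $\R^N$ is continuous. The only delicate axiom is Normalization. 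For nonnegativity I would use that, for $\wb\in\calS^N$, $\nu_{\wb}(\ub)=\max_{\pi}\sum_i w_{\pi(i)}u_i$ dominates its permutation-average $\tfrac1{N!}\sum_\pi\sum_i w_{\pi(i)}u_i=\tfrac1N(\one^\tp\wb)(\one^\tp\ub)=0$, so $\nu\ge 0$. For the ``only if'' part I would note that any \emph{nonzero} $\wb\in\calW_\nu\subseteq\calS^N$ is ascending and zero-sum, forcing $w_1<0<w_N$ and hence $\wb\in\calW$; then Proposition~\ref{prop:order_based_FM_is_FM} gives $\nu_{\wb}(\ub)>0$ for every non-constant $\ub$, and since $\calW_\nu\ne\{\zero\}$ supplies such a $\wb$, we conclude $\nu(\ub)=0\Rightarrow\ub=\alpha\one$, while $\nu(\alpha\one)=0$ is clear.

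The core of the theorem is (a)$\Rightarrow$(b). Because $\nu$ is convex (Axiom~\ref{axiom:convexity}), positively homogeneous (Axiom~\ref{axiom:pos_homo}), and finite-valued, it is sublinear and therefore equals the support function of the nonempty compact convex set $C:=\partial\nu(\zero)=\{\yb\in\R^N\mid \yb^\tp\ub\le\nu(\ub)\ \forall\,\ub\}$, i.e.\ $\nu(\ub)=\sup_{\yb\in C}\yb^\tp\ub$. I would then read off structure of $C$ from the remaining axioms via uniqueness of the support-function representative: Translation Invariance, evaluated at $\ub=\alpha\one$, forces $\one^\tp\yb=0$ for all $\yb\in C$; and Symmetry forces $PC=C$ for every permutation matrix $P$, since the support function of $P^\tp C$ equals $\ub\mapsto\sup_{\yb\in C}\yb^\tp P\ub$. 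I then set $\calW_\nu:=C\cap\Rup^N$, which is compact, convex, and (by the first property) contained in $\calS^N$.

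Finally I would establish $\nu=\sup_{\wb\in\calW_\nu}\nu_{\wb}$ by the rearrangement inequality in both directions: for ``$\le$'', each $\wb\in\calW_\nu\subseteq C$ has all its permutations in $C$, so $\nu_{\wb}(\ub)=\max_\pi\sum_i w_{\pi(i)}u_i\le\sup_{\yb\in C}\yb^\tp\ub=\nu(\ub)$; for ``$\ge$'', taking a maximizer $\yb^*\in C$ and its ascending sort $\yb^{\uparrow}\in C\cap\Rup^N=\calW_\nu$, we get $\nu_{\yb^{\uparrow}}(\ub)=\max_\pi\sum_i y^{\uparrow}_{\pi(i)}u_i\ge(\yb^*)^\tp\ub=\nu(\ub)$. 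To see $\calW_\nu\ne\{\zero\}$, note that averaging the permutations of any $\yb\in C$ yields $\zero$ (as $\one^\tp\yb=0$), so $\zero\in\calW_\nu$; and since Normalization makes $\nu$ nonzero we have $C\ne\{\zero\}$, so the sort of any nonzero point of $C$ is a nonzero element of $\calW_\nu$. I expect the main obstacle to be precisely this ``folding'' step in (a)$\Rightarrow$(b): passing from the abstract support set $C$ to its sorted slice $C\cap\Rup^N$ and showing, through permutation-invariance of $C$ together with the rearrangement inequality, that the slice reproduces $\nu$ exactly.
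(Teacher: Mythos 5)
Your proposal is correct and follows essentially the same route as the paper: the implication cycle (a)$\Rightarrow$(b)$\Rightarrow$(c)$\Rightarrow$(a), an axiom-by-axiom verification for (c)$\Rightarrow$(a), and for (a)$\Rightarrow$(b) the identification of $\nu$ with the support function of $\partial\nu(\zero)$ (which the paper phrases via Fenchel--Moreau, showing $\nu^*$ vanishes on $\dom(\nu^*)=\partial\nu(\zero)$), with translation invariance forcing $\one^\tp\wb=0$, symmetry forcing permutation invariance of the support set, and the sorted slice $\calW_\nu=\dom(\nu^*)\cap\Rup^N$ shown to reproduce $\nu$ exactly through the rearrangement inequality. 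Your only departures are cosmetic: you get continuity from finiteness plus convexity rather than the paper's explicit Lipschitz estimate, and nonnegativity from permutation averaging rather than the telescoping partial-sum argument of Proposition~\ref{prop:order_based_FM_is_FM}.
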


Theorem~\ref{thm:convex_FM_dual} provides a dual representation of any convex fairness measures characterized by a dual set, i.e., $\calW_\nu\subseteq\{\wb\in\Rup^N\mid \one^\tp\wb=0\}$. Specifically, it shows that any convex fairness measure can be equivalently expressed as a robustified order-based fairness measure (i.e., worst-case over the dual set $\calW_\nu$). Note that the dual set may not be unique in the sense that $\nu$ can equal $\sup_{\wb\in\calW_1}\nu_{\wb}$ and $\sup_{\wb\in\calW_2}\nu_{\wb}$ but $\calW_1\ne\calW_2$ (see Remark  \ref{rem:dual_set_any_compact_W}). Due to the existence of multiple weight vectors in $\calW_\nu$, convex fairness measures are generally non-linear in  $(u_{(1)},\dots,u_{(N)})^\tp$. This is in contrast to the order-based fairness measures, which are linear in $(u_{(1)},\dots,u_{(N)})^\tp$ as characterized by Axiom~\ref{axiom:order_based}. Finally,  Theorem \ref{thm:convex_FM_dual} implies that one can construct any convex fairness measure by defining a dual set. In particular, when preference information on $\wb$ is incomplete or ambiguous, one can construct a set of potential weight vectors (i.e., the dual set) instead of using a single (biased) weight vector. This is useful in practice where the decision-maker is concerned about fairness but cannot articulate her/his preference on $\wb$ (see \citealp{Armbruster_Delage:2015,Hu_et_al:2018} for similar discussions in preference robust optimization). 

\begin{remark}  \label{rem:zero_in_dual_set}
As shown in the proof of Theorem~\ref{thm:convex_FM_dual}, the dual set $\calW_\nu$ of any convex fairness measure $\nu$ is given by $\calW_\nu=\dom(\nu^*)\cap\Rup^N$, where $\nu^*$ is the convex conjugate of $\nu$. Notably, $\zero\in\dom(\nu^*)\cap\Rup^N$ since $\nu^*(\zero)=\sup_{\ub\in\R^N}\big\{ \ub^\tp\zero-\nu(\ub)\big\}=-\inf_{\ub\in\R^N}\nu(\ub)=0$.
\end{remark}

\begin{remark}  \label{rem:dual_set_any_compact_W}
Let $\calWt\subseteq\R^N$ be a compact set and consider the convex fairness measure $\nu(\ub)=\sup_{\wb\in\calWt}\nu_{\wb}(\ub)$. Then, $\nu(\ub)=\sup_{\wb\in\conv(\calWt)} \nu_{\wb}(\ub)$. Indeed, letting $\calP$ be the set of permutation matrices, we have
\begin{equation*} 
   \sup_{\wb\in\calWt} \nu_{\wb}(\ub)=\sup_{\wb\in\calWt} \sup_{P\in\calP} \ub^\tp(P\wb) = \sup_{P\in\calP} \sup_{\wb\in\calWt} (P^\tp\ub)^\tp \wb=\sup_{P\in\calP}\sup_{\wb\in\conv(\calWt)}  (P^\tp\ub)^\tp\wb, 
\end{equation*}
where the first equality follows from Theorem \ref{thm:order_based_FM_characterization_I} and the last equality follows from the linearity of the objective. 
\end{remark}

Note that the dual set  $\calW_\nu$ characterizing a convex fairness measure $\nu$ may have infinite weight vectors and thus may pose optimization burdens. However, in Proposition~\ref{prop:covnex_FM_dual_set_ex_pt}, we show that it suffices to consider only the non-zero extreme points of $\calW_\nu$. Such characterization could help establish theoretical convergence guarantees of solution approaches that exploit the extreme points of the dual set $\calW_\nu$ when solving optimization problems that involve a convex fairness measure. We propose such an algorithm and discuss its convergence in Section~\ref{sec:opt_w_fairness_measures}.

\begin{proposition} \label{prop:covnex_FM_dual_set_ex_pt}
Let $\nu$ be a convex fairness measure with a convex dual set $\calW_\nu$. Then, $\nu(\ub)=\sup_{\wb\in\calE(\calW_\nu)\setminus\{\zero\}} \nu_{\wb}(\ub)$, where $\calE(\calW_\nu)$ is the set of extreme points of $\calW_\nu$.
\end{proposition}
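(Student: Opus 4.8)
The plan is to combine the dual representation from Theorem~\ref{thm:convex_FM_dual} with the finite-dimensional Krein--Milman (Minkowski) theorem, exploiting that for each fixed $\ub$ the map $\wb\mapsto\nu_{\wb}(\ub)$ is \emph{linear} in $\wb$. By Theorem~\ref{thm:convex_FM_dual}(b), since $\nu$ is a convex fairness measure we may take its dual set $\calW_\nu\subseteq\calS^N$ to be convex \emph{and compact}, with $\nu(\ub)=\sup_{\wb\in\calW_\nu}\nu_{\wb}(\ub)$. I would invoke this compactness explicitly, as it is precisely what makes the extreme-point reduction valid.

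First I would fix $\ub\in\R^N$ and observe that $\nu_{\wb}(\ub)=\sum_{i=1}^N w_i u_{(i)}$ is a linear function of $\wb$. Since $\calW_\nu$ is compact and convex in $\R^N$, Minkowski's theorem gives $\calW_\nu=\conv(\calE(\calW_\nu))$, and the maximum of a linear functional over a compact convex set is attained at an extreme point. Hence $\sup_{\wb\in\calW_\nu}\nu_{\wb}(\ub)=\sup_{\wb\in\calE(\calW_\nu)}\nu_{\wb}(\ub)$, with the supremum attained. This already reduces the dual set to its extreme points, so it remains only to discard $\zero$.

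The last (and most delicate) step is to show that removing $\zero$ from $\calE(\calW_\nu)$ does not change the supremum. I would first note $\calE(\calW_\nu)\setminus\{\zero\}\ne\emptyset$: since $\calW_\nu\ne\{\zero\}$ and $\calW_\nu=\conv(\calE(\calW_\nu))$, the extreme points cannot all equal $\zero$. Because $\nu_{\zero}(\ub)=0$, I would then split into two cases using Axiom~\ref{axiom:normalization}. If $\ub=\alpha\one$ for some $\alpha\in\R$, then $\nu_{\wb}(\ub)=\alpha\,\one^\tp\wb=0$ for every $\wb\in\calS^N$, so $\sup_{\wb\in\calE(\calW_\nu)\setminus\{\zero\}}\nu_{\wb}(\ub)=0=\nu(\ub)$. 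If instead $\ub\ne\alpha\one$ for all $\alpha$, then Axiom~\ref{axiom:normalization} gives $\nu(\ub)>0=\nu_{\zero}(\ub)$, so $\zero$ cannot attain the supremum; decomposing $\calE(\calW_\nu)$ into $\{\zero\}$ and its complement and taking the maximum yields $\sup_{\wb\in\calE(\calW_\nu)\setminus\{\zero\}}\nu_{\wb}(\ub)=\sup_{\wb\in\calE(\calW_\nu)}\nu_{\wb}(\ub)=\nu(\ub)$.

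I expect the Krein--Milman/linearity reduction to be routine. The main obstacle is handling the exclusion of $\zero$ cleanly, and in particular the degenerate direction $\ub=\alpha\one$, where every weight vector gives value $0$ and one must appeal to normalization rather than to a strictly positive maximizer.
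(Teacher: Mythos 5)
Your proof is correct and follows essentially the same route as the paper's: both reduce the supremum over $\calW_\nu$ to its extreme points using compactness, convexity, and the linearity of $\wb\mapsto\nu_{\wb}(\ub)$, and then discard $\zero$. The only cosmetic difference is in the final step, where you case-split on whether $\ub=\alpha\one$ and invoke Axiom~\ref{axiom:normalization}, while the paper simply observes that $\nu_{\zero}\equiv 0 \leq \nu_{\wb}$ pointwise for every nonzero $\wb$ in the dual set (non-negativity from Proposition~\ref{prop:order_based_FM_is_FM}), so dropping $\zero$ can never lower the supremum.
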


Since all the fairness measures in Table \ref{table:existing_FM} are convex fairness measures (see \ref{appdx:eg_inequity_measures}), they admit the dual representation in Theorem \ref{thm:convex_FM_dual}. Next, in Proposition \ref{prop:existing_FM_dual_set}, we derive the dual sets of these fairness measures, which enable us to investigate their equivalence from a geometric perspective (see Theorem~\ref{thm:characterization_of_equivalence_CFM} and Example~\ref{eg:dual_sets_metrics_equivalence}).

\begin{proposition} \label{prop:existing_FM_dual_set}
The dual sets of fairness measures in Table \ref{table:existing_FM} are as follows.
\begin{itemize}
    \item[(a)] For (i) and (iii), $\calW_\nu=\{(-1,0,\dots,0,1)\in\R^N\}$.
    \item[(b)] For (ii), $\calW_\nu=\{\wb'\in\R^N\mid  w'_i=2(2i-1-N),\,\forall i\in[N]\}$.
    \item[(c)] For (iv)--(vi), $\calW_\nu=\{\wb\in\R^N\mid \wb=\wb'-\wbar'\one,\, \wbar'=(1/N)\sum_{j=1}^N w'_j,\,\norms{\wb'}_q \leq 1,\, \wb'\in\Rup^N\}$, where $q=\infty$, $q=2$, and $q=1$ for (iv), (v), and (vi), respectively.
    \item[(d)] For (vii), $\calW_\nu=\{\wb\in\R^N\mid \wb=\wb'-\wbar'\one,\, \wbar'=(1/N)\sum_{j=1}^N w'_j,\,\norms{\wb'}_1 \leq N,\,\wb'\in\Rup^N\}$.
    \item[(e)] For (viii), $\calW_\nu=\{\wb\in\R^N\mid w_1 = -(N-k)-1,\, w_2=\dots=w_k=-1,\, w_{k+1}=\dots=w_{N-1}=1,\, w_N=k+1,\, k\in[N-1] \}$.
\end{itemize}
\end{proposition}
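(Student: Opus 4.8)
The plan is to turn the existence guarantee of Theorem~\ref{thm:convex_FM_dual} into an explicit construction: each measure in Table~\ref{table:existing_FM} is a convex fairness measure, hence admits \emph{some} representation $\nu(\ub)=\sup_{\wb\in\calW_\nu}\nu_{\wb}(\ub)$ with $\calW_\nu\subseteq\calS^N$, and the task is to exhibit a concrete $\calW_\nu$ and confirm the identity. I would split the eight measures into three groups according to the mechanism used: the order-based measures (i)--(iii), the norm-based measures (iv)--(vii), and the piecewise-linear measure (viii). For the order-based group I would simply rewrite each measure as a linear function of the ordered outcomes $(u_{(1)},\dots,u_{(N)})$ and read off a single weight vector: (i) and (iii) both reduce to $u_{(N)}-u_{(1)}$, giving $\wb=(-1,0,\dots,0,1)$, while expanding $\sum_{i,j}|u_i-u_j|=2\sum_{i<j}(u_{(j)}-u_{(i)})$ and collecting the coefficient of each $u_{(k)}$ yields $w_k=2(2k-1-N)$ for (ii). In each case I would verify the vector lies in $\calS^N$, i.e.\ has ascending entries summing to zero, so that $\calW_\nu$ is the stated singleton.

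The norm-based group is the crux. Writing $\ubar=\tfrac1N\one^\tp\ub$, measures (iv), (v), (vi) equal $\norms{\ub-\ubar\one}_q$ with $q=1,2,\infty$, respectively. I would apply norm duality $\norms{\vb}_q=\sup_{\norms{\wb'}_{q^*}\le1}\wb'^\tp\vb$ with conjugate exponent $q^*\in\{\infty,2,1\}$, and then push the centering into the weight: since $\wb'^\tp(\ub-\ubar\one)=(\wb'-\wbar'\one)^\tp\ub$ with $\wbar'=\tfrac1N\one^\tp\wb'$, the measure equals $\sup_{\norms{\wb'}_{q^*}\le1}\wb^\tp\ub$ for $\wb=\wb'-\wbar'\one$, which is zero-sum. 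The final step exploits that the ball $\{\norms{\wb'}_{q^*}\le1\}$ is permutation-invariant and that the centering map commutes with permutations: restricting $\wb'$ to ascending order and taking the supremum over permutations $P\in\calP$ recovers the full ball, and for each ascending $\wb$ Theorem~\ref{thm:order_based_FM_characterization_I} gives $\sup_{P}(P\wb)^\tp\ub=\sum_i w_iu_{(i)}=\nu_{\wb}(\ub)$. This yields exactly the set in part (c). For (vii) I would invoke the equivalence $\phi^{\text{(vii)}}=N\phi^{\text{(vi)}}$ proved in \ref{appdx:eg_inequity_measures} and scale, using $N\nu_{\wb}(\ub)=\nu_{N\wb}(\ub)$; the substitution $\wb''=N\wb'$ turns the dual set of (vi) into $\{\wb''-\overline{\wb''}\one:\norms{\wb''}_1\le N,\,\wb''\in\Rup^N\}$, which is part (d).

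For (viii) I would use the identity $\max_j|u_{(i)}-u_{(j)}|=\max\!\big(u_{(i)}-u_{(1)},\,u_{(N)}-u_{(i)}\big)$, so that $\phi^{\text{(viii)}}(\ub)=\sum_{i}\max(u_{(i)}-u_{(1)},u_{(N)}-u_{(i)})$. Because the branches are selected independently across $i$, this sum equals the maximum, over all ways of choosing one branch per index, of a linear function of the ordered outcomes; and because $u_{(i)}-u_{(1)}$ is nondecreasing while $u_{(N)}-u_{(i)}$ is nonincreasing in $i$, the optimal selection is governed by a single threshold $k$, collapsing the $2^N$ candidate linear pieces to the $N-1$ threshold pieces indexed by $k\in[N-1]$. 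Computing the coefficient of each $u_{(i)}$ in the threshold-$k$ piece returns $w_1=-(N-k)-1$, $w_2=\dots=w_k=-1$, $w_{k+1}=\dots=w_{N-1}=1$, $w_N=k+1$, precisely the vectors in part (e), which I would then check are ascending and zero-sum.

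I expect the main obstacle to be the norm-based group: marrying conjugate-norm duality with the ascending-order constraint defining $\calS^N$ hinges on the observation that centering commutes with permutations, combined with Theorem~\ref{thm:order_based_FM_characterization_I}, and some care is needed to confirm that the constructed $\wb$ always lands in $\calS^N$. The threshold reduction in (viii) is the other delicate point, since discarding all non-threshold branch selections must be justified by the monotonicity of the two candidate expressions in the rank $i$.
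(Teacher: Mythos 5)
Your proposal is correct and follows essentially the same route as the paper's proof: (a)--(b) by rewriting the measures as linear functions of the ordered outcomes, (c)--(d) by conjugate-norm duality combined with centering (pushing $\ubar\one$ into the weight vector) and permutation invariance to restrict to $\Rup^N$, with (vii) obtained by scaling the dual set of (vi), and (e) by reducing the sum of maxima to the $N-1$ threshold-indexed linear pieces. The only cosmetic difference is in (e): the paper fixes the threshold $k(\ub)$ determined by $\ub$ and directly verifies dominance $\nu_{\wb^k}(\ub)\geq\nu_{\wb^h}(\ub)$ for $h\neq k$, whereas you justify the same collapse via the identity between a sum of maxima and the maximum over branch selections plus monotonicity in the rank, which is the same argument in different packaging.
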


\begin{remark} \label{rem:eg_order-based}
It follows from Proposition \ref{prop:existing_FM_dual_set} that measures (i)--(iii) are order-based since the dual sets of measures (i)--(iii) have only one weight vector satisfying Definition \ref{def:order_based_FM}. Specifically, measures (i) and (iii) are order-based with $\wb=(-1,0,\dots,0,1)\in\R^N$ and metric (ii) is order-based with $\wb'$, where $w'_i=2(2i-1-N)$ for all $i\in[N]$. However, when $N>3$, dual sets of measures (iv)--(viii) consist of more than one non-zero extreme point, and hence, (iv)--(viii) do not satisfy Definition \ref{def:order_based_FM}, i.e., (iv)--(viii) are not order-based.
\end{remark}

Next, in Theorem~\ref{thm:characterization_of_equivalence_CFM}, we show that one can use the dual representation in Theorem~\ref{thm:convex_FM_dual} to verify whether two convex fairness measures are equivalent. Here, we say two fairness measures $\nu^1$ and $\nu^2$ are equivalent if there exists $\beta>0$ such that $\nu^1(\ub) = \beta\nu^2(\ub)$ for all $\ub\in\R^N$. Hence, replacing $\nu^2$ with $\nu^1$ (i.e., from $\nu^2$ to $\beta\nu^2$) in the fairness-promoting optimization problem \eqref{prob:FM_eff_min_obj} or \eqref{prob:FM_eff_min_constraint}, essentially scales the weight on the fairness criterion $\gamma$ or the upper bound $\eta$ by a factor of $\beta>0$.

\begin{theorem} \label{thm:characterization_of_equivalence_CFM}
Let $\nu_1$ and $\nu_2$ be two convex fairness measures with dual sets $\calW_1=\dom(\nu_1^*)\cap\Rup^N$ and $\calW_2=\dom(\nu_2^*)\cap\Rup^N$, respectively. Then, $\nu_1$ is equivalent to $\nu_2$ if and only if $\calW_1=\beta\calW_2$ for some $\beta>0$.
\end{theorem}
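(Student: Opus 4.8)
The plan is to prove the two implications separately, exploiting that the specified dual set $\calW_i=\dom(\nu_i^*)\cap\Rup^N$ is canonically attached to $\nu_i$ and that both the dual representation of Theorem~\ref{thm:convex_FM_dual} and the Legendre--Fenchel conjugate interact cleanly with positive scaling. Throughout I will use the fact (established in the proof of Theorem~\ref{thm:convex_FM_dual}, where this canonical set is exhibited) that $\calW_i$ is a valid dual set, i.e.\ $\nu_i(\ub)=\sup_{\wb\in\calW_i}\nu_{\wb}(\ub)$, together with the elementary identities $\nu_{\beta\wb}(\ub)=\beta\,\nu_{\wb}(\ub)$ (linearity of $\wb\mapsto\nu_{\wb}$) and $\beta\Rup^N=\Rup^N$ for any $\beta>0$.

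For the ``if'' direction, I would assume $\calW_1=\beta\calW_2$ for some $\beta>0$ and simply substitute into the dual representation:
\[
\nu_1(\ub)=\sup_{\wb\in\calW_1}\nu_{\wb}(\ub)=\sup_{\wb\in\beta\calW_2}\nu_{\wb}(\ub)=\sup_{\vb\in\calW_2}\nu_{\beta\vb}(\ub)=\beta\sup_{\vb\in\calW_2}\nu_{\vb}(\ub)=\beta\,\nu_2(\ub),
\]
where the change of variable $\wb=\beta\vb$ and the homogeneity $\nu_{\beta\vb}=\beta\,\nu_{\vb}$ are used. Hence $\nu_1=\beta\nu_2$ and the two measures are equivalent.

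For the ``only if'' direction, I would assume $\nu_1=\beta\nu_2$ pointwise. Taking conjugates gives $\nu_1^*=(\beta\nu_2)^*$, and a direct computation yields $(\beta\nu_2)^*(\sb)=\sup_{\ub}\{\sb^\tp\ub-\beta\nu_2(\ub)\}=\beta\,\nu_2^*(\sb/\beta)$, so that $\dom(\nu_1^*)=\dom\big((\beta\nu_2)^*\big)=\beta\,\dom(\nu_2^*)$. Intersecting both sides with $\Rup^N$ and using that $\beta>0$ lets scaling commute with the intersection (because $\beta\Rup^N=\Rup^N$), I obtain
\[
\calW_1=\dom(\nu_1^*)\cap\Rup^N=\big(\beta\,\dom(\nu_2^*)\big)\cap\Rup^N=\beta\big(\dom(\nu_2^*)\cap\Rup^N\big)=\beta\,\calW_2.
\]

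The individual steps are short, so the crux is really the two structural facts imported from the theory built up earlier: that $\calW_i=\dom(\nu_i^*)\cap\Rup^N$ is a bona fide dual set reproducing $\nu_i$, and that $\nu_i$ equals its biconjugate so that the conjugate faithfully encodes the measure. I expect the main obstacle to be the careful bookkeeping around the conjugate --- confirming that $\nu_i$ is closed, proper, and positively homogeneous so that $\nu_i=\nu_i^{**}$ and $\dom(\nu_i^*)$ is exactly the (permutation-symmetric, compact, convex) set whose restriction to $\Rup^N$ is $\calW_i$ --- together with the verification that positive scaling commutes with the $\Rup^N$-intersection. The sign condition $\beta>0$ is essential here: it is what guarantees $\beta\Rup^N=\Rup^N$, and hence that the ordering used to define the dual set is preserved under the scaling.
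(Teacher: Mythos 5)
Your proof is correct. The ``if'' direction is essentially identical to the paper's: substitute $\calW_1=\beta\calW_2$ into the dual representation $\nu_i=\sup_{\wb\in\calW_i}\nu_{\wb}$ from Theorem~\ref{thm:convex_FM_dual} and pull the factor $\beta$ out of the supremum via the linearity of $\wb\mapsto\nu_{\wb}(\ub)$. The ``only if'' direction, however, runs on a genuinely different key lemma. The paper uses the fact, established in the proof of Theorem~\ref{thm:convex_FM_dual}, that positive homogeneity makes $\nu_i^*$ the indicator of $\dom(\nu_i^*)$, so that $\nu_i$ is the support function of the compact convex set $\dom(\nu_i^*)$; from $\nu_1=\beta\nu_2$ it deduces that $\dom(\nu_1^*)$ and $\beta\dom(\nu_2^*)$ have the same support function and invokes Theorem~13.2 of \cite{Rockafellar:1970} (a closed convex set is determined by its support function) to conclude $\dom(\nu_1^*)=\beta\dom(\nu_2^*)$. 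You bypass the support-function machinery entirely: equality of the functions $\nu_1$ and $\beta\nu_2$ already forces equality of their conjugates, and the elementary scaling identity $(\beta\nu_2)^*(\wb)=\beta\,\nu_2^*(\wb/\beta)$, valid because $\beta>0$, gives $\dom(\nu_1^*)=\beta\dom(\nu_2^*)$ with no appeal to closedness, convexity, or compactness of the domains. Both arguments finish identically by intersecting with $\Rup^N$ and using that scaling by $\beta>0$ commutes with this intersection since $\Rup^N$ is a cone --- a step you spell out and the paper leaves implicit. Your route is the more economical one for this direction; the paper's version keeps the geometric reading of $\nu_i$ as a support function in the foreground, which is the picture it exploits elsewhere (e.g., in comparing dual sets graphically). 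One small correction: your closing concern that the argument hinges on $\nu_i=\nu_i^{**}$ is unnecessary for your version of the ``only if'' step --- biconjugation enters only through the black-boxed Theorem~\ref{thm:convex_FM_dual}, which both you and the paper use in the ``if'' direction, and which the paper (but not you) also needs for ``only if.''
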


Theorem~\ref{thm:characterization_of_equivalence_CFM} provides a mechanism to investigate the equivalence of convex fairness measures from a geometric perspective via their dual sets. For example, let us first consider the case when there are two individuals or groups of interests ($N=2$). By Remark \ref{rem:zero_in_dual_set}, the dual set $\calW_\nu=\dom(\nu^*)\cap\Rup^2$ of any two-dimensional convex fairness measure $\nu$  on $\R^2$ is a line segment joining $\zero$ and a point on $\calS^2:=\{\wb\in\Rup^2\mid \one^\tp\wb=0\}$ as shown in Figure \ref{fig:dual_set_domain_2D}. Hence, the dual sets of any convex fairness measures when $N=2$ are proportional. It follows from Theorem~\ref{thm:characterization_of_equivalence_CFM} that all two-dimensional convex fairness measures are equivalent. In particular, it is easy to verify that any two-dimensional convex fairness measure equals $c|u_1-u_2|$ for some $c>0$. On the other hand, when $N>2$, convex fairness measures may not be equivalent (i.e., they yield different solutions with varying impacts on fairness). To see this, let us consider the case when $N=3$. In this case, as shown in Figure \ref{fig:dual_set_domain_3D}, the set $\calS^3$ is a surface. Since dual sets $\calW_{1}\subseteq \calS^3$ and $\calW_{2}\subseteq \calS^3$ of two convex fairness measures $\nu_1$ and $\nu_2$ may not be proportional, the two measures may not be equivalent. We illustrate this in Example~\ref{eg:dual_sets_metrics_equivalence}.

\begin{figure}[t]
     \centering
     \begin{subfigure}[b]{0.47\textwidth}
         \centering
         \includegraphics[scale=0.7]{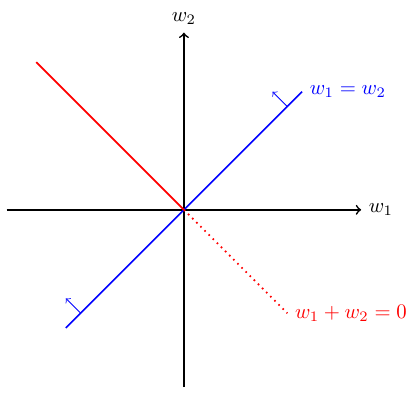}
         \caption{Two dimensional (solid red line)}
         \label{fig:dual_set_domain_2D}
     \end{subfigure}
     \hfill
     \begin{subfigure}[b]{0.47\textwidth}
         \centering
         \includegraphics[scale=0.75]{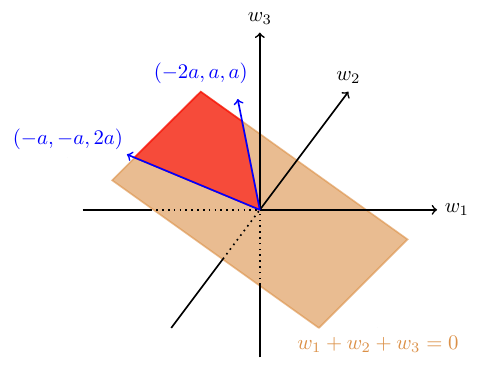}
         \caption{Three dimensional (red surface)}
         \label{fig:dual_set_domain_3D}
     \end{subfigure}
        \caption{Illustration of the set $\calS^N=\{\wb\in\Rup^N\mid \one^\tp\wb=0\}$ when (a) $N=2$ and (b) $N=3$ }
        \label{fig:dual_set_domain}
\end{figure}

\begin{example}[Dual sets in $\R^3$] \label{eg:dual_sets_metrics_equivalence}
Figure \ref{fig:dual_set_example} shows the dual sets for (iv)--(vi) when $N=3$. Note that the dual set of (iv) is proportional to that of (vi). Thus, by Theorem \ref{thm:characterization_of_equivalence_CFM}, (iv) and (vi) are equivalent. However, dual set of (v) has a curved boundary, implying that it is not proportional to that of (iv) and (vi). Thus, (v) is not equivalent to (iv) and (vi). These results are consistent with our algebraic proof of equivalence in \ref{appdx:eg_inequity_measures}.
\begin{figure}[t]
     \centering
     \begin{subfigure}[b]{0.32\textwidth}
         \centering
         \includegraphics[scale=0.45]{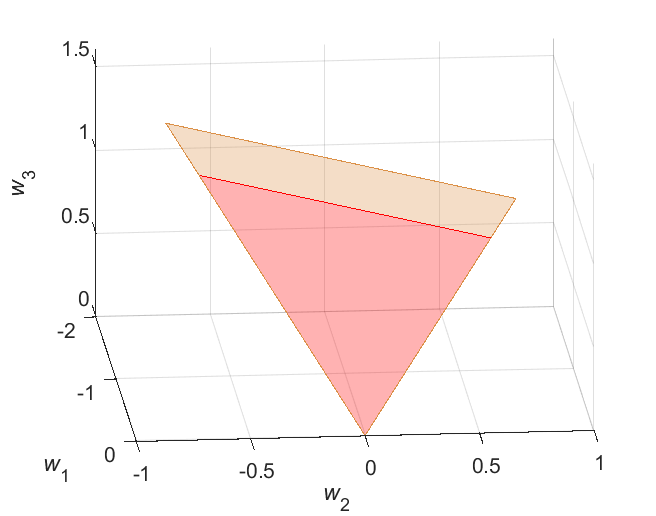}
         \caption{Measure (iv)}
         \label{fig:dual_set_iv}
     \end{subfigure}
     \hfill
     \begin{subfigure}[b]{0.32\textwidth}
         \centering
         \includegraphics[scale=0.45]{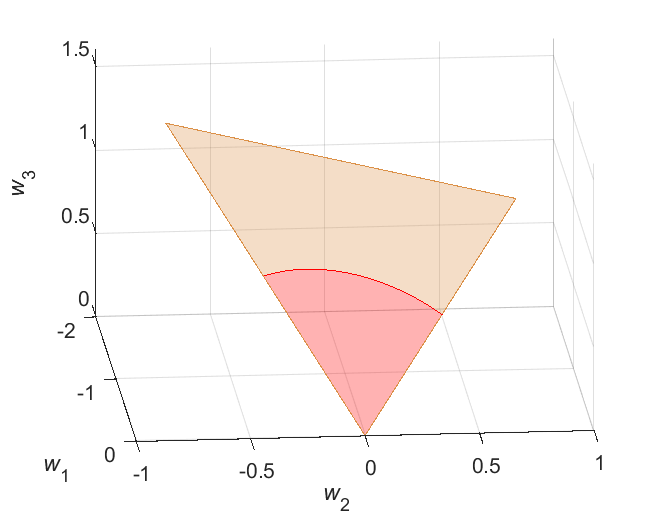}
         \caption{Measure (v)}
         \label{fig:dual_set_v}
     \end{subfigure}
     \hfill
     \begin{subfigure}[b]{0.32\textwidth}
         \centering
         \includegraphics[scale=0.45]{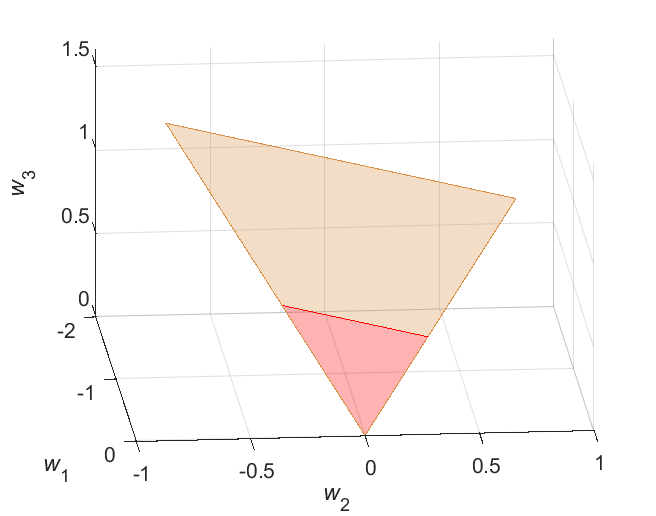}
         \caption{Measure (vi)}
         \label{fig:dual_set_vi}
     \end{subfigure}
        \caption{Dual sets $\calW_\nu$ (in red) of fairness measures (iv)--(vi) as subsets of $\calS^3$ (in brown)}
        \label{fig:dual_set_example}
\end{figure}
\end{example}

\section{Optimization with Convex Fairness Measures}  \label{sec:opt_w_fairness_measures}

In this section, we propose solution approaches to solve optimization problems with our proposed convex fairness measures. Specifically, we focus on fairness-promoting optimization models of the form \eqref{prob:FM_eff_min_obj}, where the fairness measure is incorporated into the objective function. In \ref{appdx:sol_approach_CFM_constraint}, we show how one can leverage the proposed approaches to solve fairness-promoting problems of the form \eqref{prob:FM_eff_min_constraint} using convex fairness measures and their relative counterparts.

Our proposed solution approaches can be used to solve problem \eqref{prob:FM_eff_min_obj} with any inefficiency measure $f$ (see Section~\ref{sec:num_experiments}). However, for notational convenience  and ease of exposition, we focus on the optimization problem
\begin{equation}\label{prob:FM_min}
    \min_{\xb,\,\ub} \Big\{ \nu(\ub) \,\Big|\, \ub = U(\xb),\, \xb\in\calX\Big\}.
\end{equation}
In Section~\ref{subsec:opt_w_order_based}, we derive unified reformulations of \eqref{prob:FM_min} when $\nu$ is an order-based fairness measure. Then, in Section~\ref{subsec:opt_w_CFM}, we propose a decomposition algorithm to solve \eqref{prob:FM_min} when $\nu$ is a convex fairness measure. In Section~\ref{subsec:expt_facility_location}, we provide numerical examples demonstrating the computational efficiency of our proposed approach over existing ones.

\subsection{Minimizing Order-based Fairness Measures} \label{subsec:opt_w_order_based}

We first consider problem \eqref{prob:FM_min} with an order-based fairness measure objective $\nu_{\wb}$ for some $\wb\in\calW$ (see Section \ref{sec:order_based_fairness_measures}). First, in Theorem~\ref{thm:FM_min_reformulation_order_based}, we use the characterization of $\nu_{\wb}$ in Theorem~\ref{thm:order_based_FM_characterization_I} to derive an equivalent reformulation of problem \eqref{prob:FM_min} with $\nu=\nu_{\wb}$.

\begin{theorem} \label{thm:FM_min_reformulation_order_based}
Problem \eqref{prob:FM_min} with an order-based fairness measure $\nu_{\wb}$ is equivalent to
\allowdisplaybreaks
\begin{subequations}  \label{eqn:FM_min_reformulation_order_based}
\begin{align}
\underset{\xb,\,\ub,\,\lambdab\in\R^N,\, \thetab\in\R^N}{\textup{minimize}\,} \quad
&  \one^\tp(\lambdab+\thetab)  \\   
\textup{subject to} \hspace{9mm}
&  \lambda_i + \theta_j \geq u_i w_j, \quad\forall i\in[N],\, j\in[N],  \label{eqn:FM_min_reformulation_direct_con1}\\
& \ub = U(\xb),\, \xb\in\calX.
\end{align}%
\end{subequations} 
\end{theorem}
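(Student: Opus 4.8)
The plan is to express the objective $\nu_{\wb}(\ub)$ of \eqref{prob:FM_min} as the optimal value of an inner \emph{minimization} problem that is linear in the decision variables, and then to merge that inner minimization with the outer one. The starting point is Theorem~\ref{thm:order_based_FM_characterization_I}, which gives $\nu_{\wb}(\ub)=\sup_{\wbt\in\calWt}\sum_{i=1}^N \wt_i u_i=\max_{\pi\in\Pi}\sum_{i=1}^N w_{\pi(i)} u_i$, where the supremum over the finite set $\calWt$ of permutations of $\wb$ is attained. First I would reinterpret this maximum over permutations as a linear assignment problem: introducing a permutation matrix $X=(X_{ij})$ with $X_{ij}=1$ exactly when $\pi(i)=j$, we have $\sum_{i=1}^N w_{\pi(i)} u_i=\sum_{i=1}^N\sum_{j=1}^N u_i w_j X_{ij}$, so that $\nu_{\wb}(\ub)$ equals the maximum of the bilinear form $\sum_{i,j} u_i w_j X_{ij}$ over all $N\times N$ permutation matrices.

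Next I would pass to the assignment LP over the Birkhoff polytope. Since the objective $\sum_{i,j} u_i w_j X_{ij}$ is linear in $X$ and the extreme points of the set of doubly stochastic matrices are precisely the permutation matrices (the Birkhoff--von Neumann theorem), the maximum over permutation matrices coincides with the maximum over the relaxation $\{X\mid \sum_j X_{ij}=1\ \forall i,\ \sum_i X_{ij}=1\ \forall j,\ X_{ij}\ge 0\}$. Hence $\nu_{\wb}(\ub)$ equals the optimal value of this linear program. Taking the LP dual---assigning multipliers $\lambda_i$ to the row constraints $\sum_j X_{ij}=1$ and $\theta_j$ to the column constraints $\sum_i X_{ij}=1$---yields the dual program $\min_{\lambdab,\thetab}\{\one^\tp(\lambdab+\thetab)\mid \lambda_i+\theta_j\geq u_i w_j\ \forall i,j\}$. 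The primal is feasible (any permutation matrix) and bounded (its feasible set is compact), so strong LP duality applies and gives $\nu_{\wb}(\ub)=\min_{\lambdab,\thetab}\{\one^\tp(\lambdab+\thetab)\mid \lambda_i+\theta_j\geq u_i w_j\ \forall i,j\}$. Note that, because $\wb$ is a fixed weight vector, each constraint $\lambda_i+\theta_j\geq u_i w_j$ is linear in the variables $(\lambdab,\thetab,\ub)$.

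Finally I would substitute this dual representation into \eqref{prob:FM_min}. Since for every feasible $(\xb,\ub)$ the value $\nu_{\wb}(\ub)$ is itself an infimum over $(\lambdab,\thetab)$, minimizing $\nu_{\wb}(\ub)$ over $\{\ub=U(\xb),\,\xb\in\calX\}$ equals the joint minimization of $\one^\tp(\lambdab+\thetab)$ over $(\xb,\ub,\lambdab,\thetab)$ subject to \eqref{eqn:FM_min_reformulation_direct_con1}, $\ub=U(\xb)$, and $\xb\in\calX$, which is exactly \eqref{eqn:FM_min_reformulation_order_based}. The main obstacle---and the only nonroutine step---is justifying the tightness of the assignment LP relaxation, i.e., invoking the Birkhoff--von Neumann theorem to equate the combinatorial maximum over permutations with the LP optimum; once this is in place, LP duality and the merging of the two minimizations are standard.
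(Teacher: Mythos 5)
Your proposal is correct and follows essentially the same route as the paper's proof: rewrite $\nu_{\wb}(\ub)$ via Theorem~\ref{thm:order_based_FM_characterization_I} as a linear assignment problem over permutation matrices, pass to its LP relaxation, dualize to obtain $\min\{\one^\tp(\lambdab+\thetab)\mid \lambda_i+\theta_j\geq u_iw_j\}$, and merge the inner minimization with the outer one. The only cosmetic difference is that you justify the tightness of the LP relaxation via the Birkhoff--von Neumann theorem, whereas the paper invokes total unimodularity of the assignment constraint matrix---two interchangeable standard facts.
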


We make two remarks in order. First, formulation  \eqref{eqn:FM_min_reformulation_order_based} is linear if $U$ is linear and $\calX$ is a set of linear constraints on $\xb$. Second, variables $\lambdab$ and $\thetab$ in \eqref{eqn:FM_min_reformulation_order_based} are unrestricted in sign.  In Proposition~\ref{prop:dual_variables_LB_UB}, we derive lower and upper bounds on $\lambdab$ and $\thetab$, which could reduce the search space of these variables. 

\begin{proposition} \label{prop:dual_variables_LB_UB}
Let $\Umax=\sup \limits_{\xb\in\calX} \max\limits_{i\in [N]} [U(\xb)]_i$ and $\Umin=\inf \limits_{\xb\in\calX} \min \limits_{i\in [N]} [U(\xb)]_i$, where $[U(\xb)]_i$ is the $i$th entry of $U(\xb)$. Without loss of optimality, we can impose the following bounds on variables $\lambdab$ and $\thetab$ in \eqref{eqn:FM_min_reformulation_order_based}. For all $i\in[N]$,
\begin{align}
    0 &\leq \lambda_i \leq (\Umax-\Umin) \norms{\wb}_\infty =: \lambdabar , \label{eqn:lambda_LBUB}\\
   \min\big\{\Umax w_i,\, \Umin w_i\big\} - \lambdabar &\leq \theta_i \leq \max\big\{\Umax w_i,\, \Umin w_i\big\}. \label{eqn:theta_LBUB}
\end{align}
\end{proposition}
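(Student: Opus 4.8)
The plan is to exploit the separable structure of \eqref{eqn:FM_min_reformulation_order_based}: once $\xb$ (hence $\ub=U(\xb)$) is fixed, the variables $(\lambdab,\thetab)$ solve the inner linear program $\min\{\one^\tp(\lambdab+\thetab)\mid \lambda_i+\theta_j\ge u_iw_j,\ \forall i,j\}$, whose optimal value equals $\nu_{\wb}(\ub)$ by Theorem~\ref{thm:FM_min_reformulation_order_based}. Since $\Umin\le u_i\le\Umax$ for every $i$ whenever $\ub=U(\xb)$ with $\xb\in\calX$ (directly from the definitions of $\Umin,\Umax$), it suffices to show that this inner LP admits an optimal solution lying in the stated box. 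Indeed, if for each feasible $\ub$ we can replace any inner optimizer by a boxed one of the same objective value, then appending \eqref{eqn:lambda_LBUB}--\eqref{eqn:theta_LBUB} to \eqref{eqn:FM_min_reformulation_order_based} leaves the optimal value (and an optimal solution) unchanged, which is exactly the meaning of ``without loss of optimality.''

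First I would record the key symmetry: the map $(\lambdab,\thetab)\mapsto(\lambdab+t\one,\thetab-t\one)$ preserves both feasibility and the objective $\one^\tp(\lambdab+\thetab)$ for any $t\in\R$. Starting from any inner optimizer, I would shift by $t=-\min_i\lambda_i$ so that $\min_i\lambda_i=0$; this yields another optimizer with $\lambda_i\ge 0$ for all $i$, giving the lower bound in \eqref{eqn:lambda_LBUB}. Next I would invoke the elementary \emph{tightness} property of any optimizer of the inner LP: since each $\lambda_i$ (resp.\ $\theta_j$) enters the objective with a positive coefficient and appears only in the constraints indexed by that $i$ (resp.\ $j$), at optimality we must have $\lambda_i=\max_j(u_iw_j-\theta_j)$ and $\theta_j=\max_i(u_iw_j-\lambda_i)$; otherwise the corresponding variable could be decreased, strictly reducing the objective while preserving feasibility. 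These identities are preserved by the shift above, so they hold for the normalized optimizer.

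The remaining three bounds then follow from $\lambda_i\ge 0$, feasibility, and the pointwise identities $\max_{u\in[\Umin,\Umax]}uw_j=\max\{\Umax w_j,\Umin w_j\}$ and $\min_{u\in[\Umin,\Umax]}uw_j=\min\{\Umax w_j,\Umin w_j\}$ (a one-line case split on the sign of $w_j$). For the upper bound on $\thetab$, the tightness of $\theta_j$ together with $\lambda_i\ge0$ gives $\theta_j=\max_i(u_iw_j-\lambda_i)\le\max_i u_iw_j\le\max\{\Umax w_j,\Umin w_j\}$, which is the upper bound in \eqref{eqn:theta_LBUB}. Letting $i_0$ be an index with $\lambda_{i_0}=0$, feasibility of the constraint indexed by $(i_0,j)$ yields $\theta_j\ge u_{i_0}w_j\ge\min\{\Umax w_j,\Umin w_j\}\ge\min\{\Umax w_j,\Umin w_j\}-\lambdabar$ (since $\lambdabar\ge0$), establishing the lower bound in \eqref{eqn:theta_LBUB}. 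Finally, combining tightness of $\lambda_i$ with $\theta_j\ge u_{i_0}w_j$,
\[
\lambda_i=\max_j(u_iw_j-\theta_j)\le\max_j (u_i-u_{i_0})w_j\le |u_i-u_{i_0}|\,\norms{\wb}_\infty\le(\Umax-\Umin)\,\norms{\wb}_\infty=\lambdabar,
\]
which is the upper bound in \eqref{eqn:lambda_LBUB}.

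I expect the only real care to be in the bookkeeping of the sign of $w_j$ inside the $\min/\max$ identities and in verifying that the shift normalization is compatible with the tightness identities (both are routine once stated precisely). A minor point worth flagging is that this argument in fact produces the stronger lower bound $\theta_j\ge\min\{\Umax w_j,\Umin w_j\}$; the looser form appearing in \eqref{eqn:theta_LBUB} follows immediately because $\lambdabar\ge0$. I would also note that the bounds are meaningful only when $\Umin,\Umax$ are finite, which I take for granted (otherwise the box constraints are vacuous and trivially impose no loss of optimality).
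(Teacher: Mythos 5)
Your proof is correct, and it shares the paper's overall template: fix $\ub=U(\xb)$, normalize an optimal solution of the inner LP by the objective-preserving shift $(\lambdab,\thetab)\mapsto(\lambdab+t\one,\thetab-t\one)$ so that $\min_i\lambda_i=0$, derive the bounds for that normalized optimizer, and finally pass from $\max_i u_i,\min_i u_i$ to the uniform constants $\Umax,\Umin$ by taking suprema/infima over $\calX$. Where you genuinely diverge is in the mechanism that produces the key equalities. The paper works with the primal--dual pair: it first relaxes the permutation constraints $\sum_j P_{ij}=1$ to $\sum_j P_{ij}\le 1$ so that dual feasibility alone forces $\lambdab\ge\zero$, then takes an optimal permutation $P^\star$ and invokes complementary slackness to obtain $\lambda^\star_i+\theta^\star_{\pi(i)}=u_iw_{\pi(i)}$, which it combines with the feasibility constraints \eqref{eqn:FM_min_reformulation_direct_con1} to bound $\lambdab$ and then $\thetab$. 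You never touch the primal: your per-variable tightness identities $\lambda_i=\max_j(u_iw_j-\theta_j)$ and $\theta_j=\max_i(u_iw_j-\lambda_i)$, justified by the elementary decrease-one-variable exchange argument, substitute for complementary slackness, and the index $i_0$ with $\lambda_{i_0}=0$ plays the role of the paper's $i'$. This makes your argument more self-contained (no assignment problem, no total unimodularity, no relaxation trick), and, as you correctly flag, it yields the slightly stronger lower bound $\theta_j\ge\min\{\Umax w_j,\Umin w_j\}$, whereas the paper's route through $\theta^\star_{\pi(i)}=u_iw_{\pi(i)}-\lambda^\star_i\ge u_iw_{\pi(i)}-\lambdabar$ only produces the looser form stated in \eqref{eqn:theta_LBUB}. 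The paper's choice is natural only because the optimal permutation is already on hand from the proof of Theorem~\ref{thm:FM_min_reformulation_order_based}; substantively, both arguments normalize the same dual optimizer and yield the same conclusion, and your finiteness caveat on $\Umax,\Umin$ matches the implicit assumption in the paper.
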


We close this section by noting that formulation \eqref{eqn:FM_min_reformulation_order_based} provides a unified mechanism for analyzing and solving fairness-promoting optimization problems under different order-based fairness measures. In particular, instead of deriving a customized formulation for each order-based fairness measure, one can simply use formulation \eqref{eqn:FM_min_reformulation_order_based} and replace the weight vector $\wb$ on the right-hand side of constraint \eqref{eqn:FM_min_reformulation_direct_con1}  with the one corresponding to each measure.

\subsection{Minimizing Convex Fairness Measures}  \label{subsec:opt_w_CFM}

Let us now consider problem \eqref{prob:FM_min} with a convex fairness measure objective (see Section \ref{sec:convex_fairness_measures}). By the dual representation in Theorem \ref{thm:convex_FM_dual}, we can reformulate problem \eqref{prob:FM_min} as
\allowdisplaybreaks
\begin{align} 
    &\quad\,\,\min_{\xb,\,\ub} \bigg\{ \sup_{\wb\in\calW_\nu}\nu_{\wb}(\ub)\,\bigg|\, \ub = U(\xb),\, \xb\in\calX\bigg\} \nonumber \\
    &= \min_{\xb,\,\ub,\,\delta} \bigg\{ \delta \,\bigg|\, \delta\geq\textstyle\sum \limits_{i=1}^N w_i u_{(i)},\,\forall \wb\in\calW_\nu,\, \ub = U(\xb),\, \xb\in\calX\bigg\}. \label{prob:FM_min_convex_FM}
\end{align}
Problem \eqref{prob:FM_min_convex_FM} is challenging to solve because of the following two reasons. First, note that the order-based fairness measure $\nu_{\wb}(\ub)=\sum_{i=1}^N w_iu_{(i)}$ is a function of the ordered entries of $\ub$. Thus, the constraint $\delta\geq \sum_{i=1}^N w_iu_{(i)}$ is non-linear in $\ub$. Second, $\calW_\nu$ may not be finite, i.e., \eqref{prob:FM_min_convex_FM} is a semi-infinite program. However, from Proposition \ref{prop:covnex_FM_dual_set_ex_pt}, we know that it suffices to consider the non-zero extreme points of the dual set $\calW_\nu$ to solve problem~\eqref{prob:FM_min_convex_FM}. Leveraging this fact, we next propose a column-and-constraint generation algorithm (C\&CG) to solve problem~\eqref{prob:FM_min_convex_FM}. Algorithm~\ref{algo:decomposition_method} summarizes the steps of our C\&CG.   
\IncMargin{0em}
\begin{algorithm}[t!] 
\SetKwInOut{Initialization}{Initialization}
\Initialization{Set $LB=0$, $UB=\infty$, $\varepsilon>0$, $j=1$, $\wb^0\in\calW_\nu$.}
\textbf{1. Master problem.} \\
\hspace{5.5mm}Solve master problem \eqref{prob:C&CG_Master} with weights $\{\wb^0,\dots,\wb^{j-1}\}$.\\
\hspace{5.5mm}Record the optimal solution $(\xb^j,\ub^j)$ and optimal value $\delta^j$.\\
\hspace{5.5mm}Set $LB \leftarrow \max\{LB,\delta^j\}$. \\
\textbf{2. Subproblem.} Solve subproblem \eqref{prob:C&CG_Subproblem} for fixed $\ub=\ub^j$.\\
\hspace{5.5mm}Record the optimal solution $\wb^j$ and value $D^j$. Set $UB \leftarrow  \min\big\{UB,\, D^j\big\}$.\\
\hspace{5.5mm}If $(UB-LB)/UB<\varepsilon$, terminate and return the solution with best objective. \\
\textbf{3. Scenario set enlargement.}\\
\hspace{5.5mm}Update $j \leftarrow j+1$ and go back to step 1.
\BlankLine
\caption{The column-and-constraint generation algorithm (C\&CG)} \label{algo:decomposition_method}
\end{algorithm}\DecMargin{1em}

In C\&CG, we solve a master problem and subproblem at each iteration. Specifically,  at iteration $j$ of C\&CG, we aim to solve the following master problem
\begin{equation} \label{prob:C&CG_Master_original}
    \min_{\xb,\,\ub,\,\delta} \bigg\{ \delta \,\bigg|\, \delta\geq\textstyle\sum \limits_{i=1}^N w_i u_{(i)},\,\forall \wb\in\{\wb^0,\dots,\wb^{j-1}\},\, \ub = U(\xb),\, \xb\in\calX\bigg\},
\end{equation}
where $\{\wb^0,\dots,\wb^{j-1}\}\subseteq\calW_\nu$. Note again that \eqref{prob:C&CG_Master_original} is not directly solvable in the presented form due to the non-linearity of constraint $\delta\geq \sum_{i=1}^N w_iu_{(i)}$. However, in Proposition \ref{prop:FM_min_convex_FM_CCG_master}, we provide an equivalent solvable reformulation of  \eqref{prob:C&CG_Master_original}.

\begin{proposition} \label{prop:FM_min_convex_FM_CCG_master}
The master problem \eqref{prob:C&CG_Master_original} in C\&CG is equivalent to 
\allowdisplaybreaks
\begin{subequations}
\begin{align}
\underset{\xb,\,\ub,\,\lambdab,\,\thetab,\,\delta}{\textup{minimize}} \quad
&  \delta \\
\textup{subject to} \quad
& \delta \geq  \one^\tp(\lambdab^k+\thetab^k),\quad\forall k\in[0,j-1]_\Z, \label{eqn:C&CG_Master_con1} \\
& \lambda^k_i + \theta^k_{i'} \geq u_i w_{i'}^k,\quad\forall i\in[N],\,i'\in[N],\,k\in[0,j-1]_\Z, \label{eqn:C&CG_Master_con2}\\
& \ub=U(\xb),\,\xb\in\calX.
\end{align}  \label{prob:C&CG_Master}%
\end{subequations} \vspace{-12mm}
\end{proposition}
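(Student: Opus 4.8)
The plan is to reformulate each nonlinear epigraph constraint $\delta \geq \sum_{i=1}^N w_i^k u_{(i)} = \nu_{\wb^k}(\ub)$ appearing in \eqref{prob:C&CG_Master_original} one at a time, and then to collect the results. The central device is to express $\nu_{\wb^k}(\ub)$ as the optimal value of a linear program in auxiliary variables, exactly mirroring the derivation behind Theorem~\ref{thm:FM_min_reformulation_order_based}, so that the single nonlinear constraint for index $k$ is replaced by the pair of linear constraint families \eqref{eqn:C&CG_Master_con1}--\eqref{eqn:C&CG_Master_con2}.

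Fix $k \in [0,j-1]_\Z$. By Theorem~\ref{thm:order_based_FM_characterization_I}, $\nu_{\wb^k}(\ub) = \sup_{\wbt} \sum_{i=1}^N \wt_i u_i$, where $\wbt$ ranges over all permutations of $\wb^k$. I would rewrite this supremum as a linear assignment problem, namely $\max\big\{ \sum_{i,i'} u_i w_{i'}^k X_{ii'} \,\big|\, X \text{ a permutation matrix}\big\}$, and then relax the permutation matrices to the Birkhoff polytope $\conv(\calP)$ of doubly stochastic matrices; this relaxation is lossless because the objective is linear and an optimal extreme point of $\conv(\calP)$ is a permutation matrix. Taking the LP dual, with multipliers $\lambda_i^k$ for the row constraints and $\theta_{i'}^k$ for the column constraints, strong LP duality yields
\[
\nu_{\wb^k}(\ub) = \min_{\lambdab^k,\, \thetab^k} \Big\{ \one^\tp(\lambdab^k + \thetab^k) \,\Big|\, \lambda_i^k + \theta_{i'}^k \geq u_i w_{i'}^k,\ \forall i,i'\in[N] \Big\}.
\]

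With this representation in hand, the epigraph constraint $\delta \geq \nu_{\wb^k}(\ub)$ reads $\delta \geq \min_{\lambdab^k,\thetab^k}\{\cdots\}$, which holds if and only if there exist dual-feasible $(\lambdab^k,\thetab^k)$ with $\delta \geq \one^\tp(\lambdab^k + \thetab^k)$. Promoting these multipliers to decision variables of the master problem converts the single constraint into \eqref{eqn:C&CG_Master_con1}--\eqref{eqn:C&CG_Master_con2} for index $k$: the ``only if'' direction uses an optimal dual solution attaining the minimum, while the ``if'' direction follows because any dual-feasible pair furnishes an upper bound on $\nu_{\wb^k}(\ub)$ via weak duality. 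Repeating this for every $k \in [0,j-1]_\Z$ and leaving $\ub = U(\xb),\, \xb\in\calX$ untouched produces \eqref{prob:C&CG_Master}. Since the lifted variables $(\lambdab^k,\thetab^k)$ are existentially quantified and do not constrain the projection onto $(\xb,\ub,\delta)$, and the objective $\delta$ is unchanged, the two formulations are equivalent.

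The only real subtlety, and the step I would treat most carefully, is twofold: the lossless passage from the permutation supremum to the doubly stochastic LP (which rests on Birkhoff--von Neumann and the linearity of the objective), and the orientation of the epigraph lifting. Because $\nu_{\wb^k}$ appears as a \emph{minimization} after dualization, the constraint ``$\delta \geq \min$'' lifts correctly to an existence statement over the auxiliary variables; if the min/max orientation or inequality sign were reversed, this lifting would fail. Everything else is routine bookkeeping of the assignment LP and its dual, already implicit in Theorem~\ref{thm:FM_min_reformulation_order_based}.
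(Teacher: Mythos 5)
Your proposal is correct and takes essentially the same route as the paper: both reduce each epigraph constraint $\delta \geq \nu_{\wb^k}(\ub)$ to the dual of the assignment LP (the representation \eqref{eqn:order_based_FM_characterization_II} behind Theorem~\ref{thm:FM_min_reformulation_order_based}), and then lift the dual variables $(\lambdab^k,\thetab^k)$ into the master problem, with the ``if'' direction from weak duality and the ``only if'' direction from an optimal dual solution. The only cosmetic difference is that you justify exactness of the assignment LP relaxation via the Birkhoff--von Neumann theorem, whereas the paper invokes total unimodularity of the assignment constraint matrix; the two justifications are interchangeable here.
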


Since only a set of weight vectors in $\calW_\nu$ is considered, the master problem is a relaxation of the original problem \eqref{prob:FM_min_convex_FM}, and thus its optimal value $\delta^j$ provides a lower bound to \eqref{prob:FM_min_convex_FM}. With the optimal solution $\ub^j$ from the master problem, we solve the following subproblem
\begin{equation} \label{prob:C&CG_Subproblem}
    D^j = \max\bigg\{ \textstyle \sum \limits_{i=1}^N w_i u^j_{(i)}\,\bigg|\, \wb\in\calW_\nu \bigg\},
\end{equation}
and record the optimal solution $\wb^j\in\calW_\nu$ of the subproblem. Since $\ub^j$ obtained from the master problem is feasible, $D^j$ provides an upper bound to \eqref{prob:FM_min_convex_FM}. Note that subproblem \eqref{prob:C&CG_Subproblem} is always feasible by construction because $\calW_\nu$ is non-empty. Since $\calW_\nu$ is convex (see Remark \ref{rem:dual_set_any_compact_W}) and the objective $\sum_{i=1}^N w_i u_{(i)}$ is linear, subproblem \eqref{prob:C&CG_Subproblem} can be efficiently solved using convex optimization algorithms.  In particular, if $\calW_\nu$ is a polytope, subproblem \eqref{prob:C&CG_Subproblem} reduces to a linear program. Finally, if the gap between the lower and upper bounds is smaller than a pre-specified tolerance $\varepsilon$, C\&CG terminates and returns the solution with the best objective value $UB$. Otherwise, we proceed to the next iteration and solve the master problem with an enlarged subset of weights $\{\wb^0,\dots,\wb^j\}$. Note that one can set the initial weight $\wb^0$ as the zero vector $\zero$ (see Remark \ref{rem:zero_in_dual_set}). 

Next, we discuss the convergence of the proposed C\&CG. Note that $\calW_\nu$ is compact and convex. If, in addition, $\calX$ is compact (a mild assumption that holds valid in a wide range of applications, including facility location and scheduling problems; see \citealp{Ahmadi-Javid_et_al:2017, Celik-Turkoglu_Erol-Genevois:2020, Marynissen_Demeulemeester:2019}), Proposition 2 of \cite{Bertsimas_Shtern:2018} ensures that any accumulation point of the sequence $\{\xb^j\}_{j\in\mathbb{N}}$ generated from C\&CG is an optimal solution to \eqref{prob:FM_min_convex_FM}. Moreover, if $\calW_\nu$ is a polyhedron and a vertex is always returned when solving the subproblem, C\&CG terminates in a finite number of iterations by Proposition~\ref{prop:covnex_FM_dual_set_ex_pt}. 

We close this section by noting that our unified reformulation \eqref{prob:FM_min_convex_FM} and C\&CG provide a mechanism for analyzing and solving fairness-promoting optimization problems under different convex fairness measures. In particular, instead of deriving customized reformulations and solution methods for each convex fairness measure one would like to incorporate in the optimization model, one could use \eqref{prob:FM_min_convex_FM} and C\&CG to solve the reformulation with different dual set $\calW_\nu$ corresponding to each convex fairness measure.

\section{Stability Analysis} \label{sec:stability_analysis}

Given that there are many different convex fairness measures, it is crucial to quantify how the choice of convex fairness measure in the objective of an optimization problem would affect the optimal value and solution. In this section, we leverage the dual representation of convex fairness measures to investigate the stability of the optimal value and solution of the fairness-promoting optimization problem of the form \eqref{prob:FM_eff_min_obj}, i.e., $\min \big\{ f(\ub) + \gamma \phi(\ub) \,\big|\, \ub = U(\xb),\, \xb\in\calX\big\}$, with respect to the choice of different convex fairness measures in the objective. Note that replacing the fairness measure in the objective of problem \eqref{prob:FM_eff_min_obj} with another does not impact the feasibility of the problem. However, the optimal solution may differ if the two measures are not equivalent. In contrast, replacing the fairness measure in the constraints as in problem \eqref{prob:FM_eff_min_constraint} (i.e., $\phi(\ub)\leq \eta$) with another one may impact the feasibility of the problem if the two measures are not equivalent due to the upper bound $\eta$ on the value of the fairness measure.

Let us first define the following additional notation used in the analysis. We define the distance between a point $\wb\in\R^N$ and a set $\calW$ as $d(\wb,\calW):=\inf_{\wbt\in\calW}\norms{\wb-\wbt}_2$, and the Hausdorff distance between two sets $\calW_1$ and $\calW_2$ as
\begin{equation} \label{eqn:Hausdorff}
    d_H(\calW_1,\calW_2):=\max\bigg\{ \sup_{\wb_2\in\calW_2}d(\wb_2,\calW_1),\, \sup_{\wb_1\in\calW_1}d(\wb_1,\calW_2)\bigg\}
\end{equation}
First, in Lemma~\ref{lemma:CFM_dual_set_Hausdorff}, we show that the difference between two convex fairness measures is bounded by the Hausdorff distance between their dual sets.

\begin{lemma} \label{lemma:CFM_dual_set_Hausdorff}
Let $\nu_1$ and $\nu_2$ be two convex fairness measures with dual sets $\calW_1$ and $\calW_2$, respectively. For any $\ub\in\R^N$, we have $\big| \nu_1(\ub)-\nu_2(\ub) \big| \leq d_H(\calW_1,\calW_2) \cdot \norms{\ub}_2$.
\end{lemma}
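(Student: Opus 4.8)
The plan is to use the dual representation from Theorem~\ref{thm:convex_FM_dual}, which expresses each convex fairness measure as a supremum of the linear-in-$\wb$ order-based measures: $\nu_k(\ub)=\sup_{\wb\in\calW_k}\nu_{\wb}(\ub)$ for $k=1,2$. The key observation is that, for fixed $\ub$, the map $\wb\mapsto\nu_{\wb}(\ub)=\sum_{i=1}^N w_i u_{(i)}$ is a linear function of $\wb$, namely $\wb\mapsto \wb^\tp \ub^\downarrow$ where $\ub^\downarrow:=(u_{(1)},\dots,u_{(N)})^\tp$ is the sorted version of $\ub$. Since $\wb\in\Rup^N$ and the entries of $\ub^\downarrow$ are also sorted, we in fact have $\nu_{\wb}(\ub)=\wb^\tp\ub^\downarrow$, and crucially $\|\ub^\downarrow\|_2=\|\ub\|_2$ because sorting merely permutes the coordinates. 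This linearity makes $\wb\mapsto\nu_{\wb}(\ub)$ Lipschitz in $\wb$ with constant $\|\ub^\downarrow\|_2=\|\ub\|_2$.

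First I would fix an arbitrary $\ub\in\R^N$ and, without loss of generality, bound $\nu_1(\ub)-\nu_2(\ub)$ from above (the other direction follows by symmetry, i.e.\ by swapping the roles of $\nu_1$ and $\nu_2$). Let $\wb^\star\in\calW_1$ attain the supremum defining $\nu_1(\ub)$ (the supremum is attained because $\calW_1$ is compact and the objective is continuous in $\wb$). Then for any $\wb\in\calW_2$,
\begin{equation*}
\nu_1(\ub)-\nu_2(\ub)=(\wb^\star)^\tp\ub^\downarrow-\sup_{\wb'\in\calW_2}(\wb')^\tp\ub^\downarrow\leq (\wb^\star)^\tp\ub^\downarrow-\wb^\tp\ub^\downarrow=(\wb^\star-\wb)^\tp\ub^\downarrow.
\end{equation*}
Next I would choose $\wb$ to be a nearest point to $\wb^\star$ in $\calW_2$, so that $\|\wb^\star-\wb\|_2=d(\wb^\star,\calW_2)$. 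By Cauchy--Schwarz, $(\wb^\star-\wb)^\tp\ub^\downarrow\leq\|\wb^\star-\wb\|_2\,\|\ub^\downarrow\|_2=d(\wb^\star,\calW_2)\,\|\ub\|_2$. Since $\wb^\star\in\calW_1$, we have $d(\wb^\star,\calW_2)\leq\sup_{\wb_1\in\calW_1}d(\wb_1,\calW_2)\leq d_H(\calW_1,\calW_2)$ by the definition~\eqref{eqn:Hausdorff} of the Hausdorff distance. Combining these gives $\nu_1(\ub)-\nu_2(\ub)\leq d_H(\calW_1,\calW_2)\,\|\ub\|_2$.

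By interchanging $\nu_1$ and $\nu_2$ in the argument above (using that $d_H$ is symmetric), I would obtain $\nu_2(\ub)-\nu_1(\ub)\leq d_H(\calW_1,\calW_2)\,\|\ub\|_2$ as well, and the two bounds together yield $|\nu_1(\ub)-\nu_2(\ub)|\leq d_H(\calW_1,\calW_2)\,\|\ub\|_2$. Since $\ub$ was arbitrary, this completes the proof.

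The main obstacle, and the only point requiring care, is justifying the identity $\nu_{\wb}(\ub)=\wb^\tp\ub^\downarrow$ together with $\|\ub^\downarrow\|_2=\|\ub\|_2$: the first uses that the same sorting is applied consistently (both $\wb$ and the weights act on the ascending-sorted entries $u_{(i)}$), and the second uses that the Euclidean norm is permutation-invariant. Everything else is a routine combination of attainment of the supremum on a compact set, Cauchy--Schwarz, and the definition of the Hausdorff distance. One should also confirm the supremum defining each $\nu_k(\ub)$ is attained, which follows from compactness of $\calW_k$ (guaranteed by Theorem~\ref{thm:convex_FM_dual}) and continuity of $\wb\mapsto\nu_{\wb}(\ub)$.
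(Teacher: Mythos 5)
Your proof is correct and follows essentially the same route as the paper's: both rest on the dual representation $\nu_k(\ub)=\sup_{\wb\in\calW_k}\wb^\tp(u_{(1)},\dots,u_{(N)})^\tp$, the Cauchy--Schwarz inequality combined with permutation invariance of $\norms{\cdot}_2$, and the definition of $d_H$. The only cosmetic difference is that you make the argument concrete via an attained maximizer $\wb^\star$ and a nearest point in $\calW_2$ (justified by compactness), whereas the paper phrases the same bound through abstract $\sup$--$\inf$ manipulations without invoking attainment.
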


Note that if $\calW_1=\calW_2$, then $d_H(\calW_1,\calW_2)=0$, implying that $\nu_1\equiv\nu_2$ from Lemma~\ref{lemma:CFM_dual_set_Hausdorff}. Next, in Theorem \ref{thm:stability_analysis}, we use the results in Lemma \ref{lemma:CFM_dual_set_Hausdorff} to show that the differences in optimal value and solution of \eqref{prob:FM_eff_min_obj} under two different convex fairness measures are bounded by the Hausdorff distance between their dual sets.

\begin{theorem} \label{thm:stability_analysis}
Let $\nu_1$ and $\nu_2$ be two convex fairness measures with dual sets $\calW_1$ and $\calW_2$, respectively. Let $\upsilon_k^\star$ and $\xb^\star_k$ be the optimal value and an optimal solution of \eqref{prob:FM_eff_min_obj} with $\nu=\nu_k$ for $k\in\{1,2\}$. Assume that $\calX$ is compact. Then, the following statements hold.
\begin{enumerate}
    \item [(a)] We have $\big|\upsilon^\star_1-\upsilon^\star_2\big| \leq \gamma\Umax d_H(\calW_1,\calW_2)$, where $\Umax=\max_{\xb\in\calX} \norms{U(\xb)}_2<\infty$.
    \item [(b)] Let $\wb^\star_k\in\argmax_{\wb\in\calW_k} \nu_{\wb}(U(\xb^\star_k))$ for $k\in\{1,2\}$. Assume that the following quadratic growth condition hold for $k\in\{1,2\}$: there exists $\tau_k>0$ such that
    \begin{equation}  \label{eqn:local_quad_growth_cond}
       f(U(\xb))+\gamma\nu_{\wb^\star_k}(U(\xb))-f(U(\xb^\star_k))-\gamma\nu_{\wb^\star_k}(U(\xb^\star_k)) \geq \tau_k\norms{\xb-\xb^\star_k}_2^2
    \end{equation}
    for all $\xb\in\calX$. Then, we have 
    \begin{equation}  \label{eqn:statbility_bounds_on_optimal_solution}
      \norms{\xb^\star_1-\xb^\star_2}_2\leq \sqrt{\frac{\gamma\Umax}{\min\{\tau_1,\tau_2\}}} \cdot d^{1/2}_H(\calW_1,\calW_2).  
    \end{equation}
\end{enumerate}
\end{theorem}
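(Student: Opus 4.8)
The two parts have a common engine: Lemma~\ref{lemma:CFM_dual_set_Hausdorff}, which controls the pointwise gap between $\nu_1$ and $\nu_2$ by $d_H(\calW_1,\calW_2)\cdot\norms{\ub}_2$. The plan is to convert this pointwise control into control on the optimal value (part (a)) and then, via the quadratic growth condition, into control on the optimal solution (part (b)). The compactness of $\calX$ together with continuity of $U$ guarantees $\Umax=\max_{\xb\in\calX}\norms{U(\xb)}_2<\infty$, so all the bounds below are finite.

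\textbf{Part (a).} First I would write the two optimal values explicitly. For each $k$, define $g_k(\xb):=f(U(\xb))+\gamma\nu_k(U(\xb))$, so that $\upsilon_k^\star=\min_{\xb\in\calX}g_k(\xb)$. The key observation is that $g_1$ and $g_2$ differ only in the fairness term, so for every $\xb\in\calX$,
\begin{equation*}
  \big|g_1(\xb)-g_2(\xb)\big|=\gamma\big|\nu_1(U(\xb))-\nu_2(U(\xb))\big|\leq \gamma\, d_H(\calW_1,\calW_2)\,\norms{U(\xb)}_2\leq \gamma\Umax\, d_H(\calW_1,\calW_2),
\end{equation*}
where the first inequality is Lemma~\ref{lemma:CFM_dual_set_Hausdorff} and the second uses the definition of $\Umax$. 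A uniform sup-norm bound on two functions transfers directly to their minima: $|\min g_1-\min g_2|\leq \sup_{\xb\in\calX}|g_1(\xb)-g_2(\xb)|$. This is the standard argument — evaluate $g_1$ at $\xb^\star_2$ and $g_2$ at $\xb^\star_1$ to get the two-sided inequality — and yields the claimed bound.

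\textbf{Part (b).} This is the harder part and where the quadratic growth condition does the work. The idea is that $\xb^\star_1$ is optimal for $g_1$, so it is in particular within $\gamma\Umax d_H$ of being optimal for the \emph{surrogate} objective $f(U(\cdot))+\gamma\nu_{\wb^\star_1}(U(\cdot))$ whose minimizer is $\xb^\star_1$ itself; the point is to show $\xb^\star_2$ is \emph{nearly} optimal for this surrogate and then invoke \eqref{eqn:local_quad_growth_cond}. Concretely, I would fix $k=1$, set $\xb=\xb^\star_2$ in the growth condition, and bound the left-hand side. Using $\nu_{\wb^\star_1}(U(\xb^\star_1))=\nu_1(U(\xb^\star_1))$ (since $\wb^\star_1$ attains the supremum) and $\nu_{\wb^\star_1}(U(\xb^\star_2))\leq \nu_1(U(\xb^\star_2))$ (since $\wb^\star_1$ is only one feasible point in $\calW_1$), the surrogate gap at $\xb^\star_2$ is at most $g_1(\xb^\star_2)-g_1(\xb^\star_1)=g_1(\xb^\star_2)-\upsilon_1^\star$. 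Then I would chain: $g_1(\xb^\star_2)-\upsilon_1^\star\leq g_1(\xb^\star_2)-g_2(\xb^\star_2)+\upsilon_2^\star-\upsilon_1^\star$ (using $g_2(\xb^\star_2)=\upsilon_2^\star$), and each of these two differences is bounded by $\gamma\Umax d_H$ from the estimates in part (a). Thus the growth condition gives $\tau_1\norms{\xb^\star_2-\xb^\star_1}_2^2\leq 2\gamma\Umax d_H$, hence $\norms{\xb^\star_1-\xb^\star_2}_2\leq\sqrt{2\gamma\Umax/\tau_1}\cdot d_H^{1/2}$. A symmetric argument with $k=2$ gives the same bound with $\tau_2$, and taking the smaller $\tau$ yields \eqref{eqn:statbility_bounds_on_optimal_solution}.

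\textbf{Main obstacle.} The delicate point in part (b) is the direction of the inequality $\nu_{\wb^\star_1}(\ub)\leq\nu_1(\ub)$ for general $\ub$ versus the equality at $\ub=U(\xb^\star_1)$: I must use the supremum characterization $\nu_1=\sup_{\wb\in\calW_1}\nu_{\wb}$ from Theorem~\ref{thm:convex_FM_dual} carefully, applying the inequality at $\xb^\star_2$ and the equality at $\xb^\star_1$, so that the surrogate gap is genuinely \emph{upper} bounded by the true gap $g_1(\xb^\star_2)-\upsilon_1^\star$. I would also need to verify that the constant that emerges matches \eqref{eqn:statbility_bounds_on_optimal_solution} — my chaining produces a factor of $2$ under the square root, so I should double check whether a sharper telescoping (or a slightly different routing through $\upsilon_1^\star,\upsilon_2^\star$) removes it, or whether the stated bound already absorbs it into the constant.
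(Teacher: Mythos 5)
Your overall route coincides with the paper's. Part (a) is exactly the paper's argument: evaluate each problem's objective at the other problem's minimizer and apply Lemma~\ref{lemma:CFM_dual_set_Hausdorff}. In part (b), your surrogate-objective mechanism --- the growth condition for $k=1$ applied at $\xb=\xb^\star_2$, combined with $\nu_{\wb^\star_1}(U(\xb^\star_1))=\nu_1(U(\xb^\star_1))$ and $\nu_{\wb^\star_1}(U(\xb^\star_2))\le\nu_1(U(\xb^\star_2))$ --- is also the paper's mechanism (with the roles of the indices swapped). In fact, writing $c=\norms{\xb^\star_1-\xb^\star_2}_2^2$ and $b=\gamma\Umax d_H(\calW_1,\calW_2)$, your chain, \emph{before} you invoke part (a), reads
\begin{equation*}
\tau_1 c \;\le\; \bigl[g_1(\xb^\star_2)-g_2(\xb^\star_2)\bigr] + \bigl(\upsilon^\star_2-\upsilon^\star_1\bigr) \;\le\; b + \bigl(\upsilon^\star_2-\upsilon^\star_1\bigr),
\end{equation*}
which after rearrangement is precisely the paper's intermediate inequality $\upsilon^\star_1-\upsilon^\star_2 \le -\tau_1 c + b$; the mirror argument gives $\upsilon^\star_2-\upsilon^\star_1 \le -\tau_2 c + b$.

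The gap is in your final combination step, and it is real: by substituting the part-(a) bound $\upsilon^\star_2-\upsilon^\star_1\le b$ into each direction separately, you pay the Hausdorff term twice and obtain $\tau_k c\le 2b$, i.e.\ the constant $\sqrt{2\gamma\Umax/\min\{\tau_1,\tau_2\}}$, which is a factor $\sqrt{2}$ larger than stated; so as written you have not established \eqref{eqn:statbility_bounds_on_optimal_solution}. The sharper routing you suspected exists does, and it is one line: do \emph{not} discard the sign of the value difference. Your two symmetric inequalities are $\upsilon^\star_1-\upsilon^\star_2\le-\tau_1 c+b$ and $\upsilon^\star_2-\upsilon^\star_1\le-\tau_2 c+b$; adding them cancels the signed value differences and gives $(\tau_1+\tau_2)c\le 2b$, hence $c\le 2b/(\tau_1+\tau_2)\le b/\min\{\tau_1,\tau_2\}$, which is exactly \eqref{eqn:statbility_bounds_on_optimal_solution}. (Equivalently --- this is how the paper concludes --- the maximum of the two left-hand sides is $|\upsilon^\star_1-\upsilon^\star_2|\ge 0$, so $0\le-\min\{\tau_1,\tau_2\}\,c+b$.) With that repair, your proof is the paper's proof.
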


\begin{remark}  \label{rem:quad_growth_cond}
The quadratic growth condition in Theorem \ref{thm:stability_analysis} is a standard assumption in stability analysis of stochastic programs and distributionally robust optimization problems (see, e.g., \citealp{Liu_et_al:2019, Pichler_Xu:2018, Shapiro:1994}). For example, if $f$ is strongly convex and $U$ is affine of the form $U(\xb)=A\xb+\bb$ with $A$ being a matrix of rank $N$, then $f\circ U + \gamma\nu\circ U$ is also strongly convex in $\xb$ (where $\circ$ denotes the composition of two functions). Thus, in this case, the quadratic growth condition is satisfied (see, e.g., discussions in \citealp{Chang_et_al:2018}). 
\end{remark}

Theorem~\ref{thm:stability_analysis} provides a mechanism for quantifying the influence of the choice of convex fairness measure on the optimal value and solution of problem \eqref{prob:FM_eff_min_obj}. Specifically, it shows that the absolute difference between the optimal value and solution of \eqref{prob:FM_eff_min_obj} under two different convex fairness measures, $\nu_1$ and $\nu_2$, is upper bounded by the Hausdorff distance $d_H(\calW_1,\calW_2)$ between their dual sets $\calW_1$ and $\calW_2$.  Hence, the optimal value and solution would not deviate significantly if the two convex fairness measures are close enough (i.e., the Hausdorff distance between their dual sets is small).

\section{Numerical Experiments}   \label{sec:num_experiments}

In this section, we present numerical results demonstrating the computational efficiency of our proposed unified reformulations and solution methods for optimization problems with convex fairness measures over traditional ones using a fair facility location problem (Section~\ref{subsec:expt_facility_location}). In addition, we illustrate our stability results using a fair resource allocation problem (Section~\ref{subsec:expt_resource_allocation}). We implement all formulations and solution algorithms in the AMPL modeling language and use CPLEX (version 20.1.0.0) as the solver with default settings. We conduct all experiments on a computer with an Intel Xeon Silver processor, a 2.10 GHz CPU, and a 128 GB memory.

\subsection{Fair Facility Location} \label{subsec:expt_facility_location}

In this section, we illustrate the potential computational advantage of our proposed unified framework for optimization problems with a convex fairness measure in the context of a fair $p$-median facility location problem. Specifically, given sets of customer locations $I$ and potential facility locations $J$, this problem consists of determining where to open $p<|J|$ facilities to minimize the convex combination of the total transportation cost (i.e., an efficiency measure) and a measure of unfairness in the transportation cost across customers. The demand at $i\in I$ is $d_i$ and the transportation cost, per unit demand, between $i\in I$ and $j\in J$ is $c_{ij}$. Let $x_j$ be a binary variable taking value $1$ if facility $j$ is open, and $y_{ij}$ be a binary variable taking value $1$ if customer $i$ is assigned to facility $j$, and are $0$ otherwise.  Finally, we define a non-negative variable $r_i$ as the transportation cost of $i\in I$. Using this notation, we formulate the problem as
\allowdisplaybreaks
\begin{subequations}  \label{prob:FLP}
\begin{align}
\underset{\xb,\,\yb,\,\rb}{\textup{minimize}\,} \quad
&  \gamma \sum_{i\in I} r_i + (1-\gamma) \nu(\rb) \label{eqn:FLP_obj}  \\   
\textup{subject to} \hspace{4mm}
&  r_i = \sum_{j\in J} d_i c_{ij} y_{ij},\quad\forall i\in I, \label{eqn:FLP_con1} \\ 
& \sum_{j\in J} x_j = p,\quad \sum_{j\in J} y_{ij} = 1,\quad y_{ij} \leq x_j,\quad\forall i\in I,\, j\in J, \label{eqn:FLP_con2} \\
& \xb\in\{0,1\}^J,\,\yb\in\{0,1\}^{I\times J}, \label{eqn:FLP_con3}
\end{align}%
\end{subequations} 
where constraints \eqref{eqn:FLP_con2} ensure that only $p$ facilities are open, and each customer is assigned to exactly one open facility. The parameter $\gamma$ in \eqref{eqn:FLP_obj} controls the trade-off between efficiency (the first term) and fairness (the second term), where a smaller $\gamma$ reflects a higher emphasis on fairness. For illustrative purposes, we consider two convex fairness measures in the objective: the Gini deviation $\nu(\rb)=|I|^{-1}\sum_{i\in I}\sum_{i'\in I}|r_i-r_{i'}|$ and the absolute deviation from mean $\nu(\rb)=\sum_{i\in I} |r_i-\bar{r}|$, where $\bar{r}=|I|^{-1}\sum_{i\in I} r_i$. 

Next, we compare our proposed unified reformulations of these measures with the traditional ones.  For the absolute deviation from mean (MAD), existing literature \citep{Chen_Hooker:2023, Shehadeh_Snyder:2021} introduce auxiliary variables $z_{i}=|r_i-\bar{r}|$ and derive the following equivalent linear reformulation:
\begin{equation} \label{eqn:FLP_direct_reformulation_MAD}
    \min_{\xb,\,\yb,\,\rb,\,\zb} \Bigg\{\gamma\sum_{i\in I} r_i + (1-\gamma) \sum_{i=1}^N  z_i \,\Bigg|\, z_i \geq r_i - \frac{1}{N}\sum_{i'=1}^N r_{i'},\, z_i \geq \frac{1}{N}\sum_{i'=1}^N r_{i'} - r_i,\forall i\in[N],\,\text{\eqref{eqn:FLP_con1}--\eqref{eqn:FLP_con3}}\Bigg\}.
\end{equation}
Recall from Proposition~\ref{prop:existing_FM_dual_set} that the MAD is a convex fairness measure with a dual set $\calW_\nu=\{\wb\in\R^N\mid \wb=\wb'-\wbar'\one,\, \wbar'=(1/N)\sum_{j=1}^N w'_j,\,\norms{\wb'}_\infty \leq 1,\, \wb'\in\Rup^N\}$. Thus, we apply the techniques in Section~\ref{subsec:opt_w_CFM} to reformulate problem \eqref{prob:FLP} with the MAD as
\begin{equation} \label{eqn:FLP_dual_reformulation_MAD}
    \min_{\xb,\,\yb,\,\rb,\,\lambdab,\,\thetab,\,\delta} \Bigg\{ \gamma \sum_{i\in I} r_i + (1-\gamma) \delta \,\Bigg|\, \delta \geq \nu_{\wb}(\rb),\, \forall \wb\in\calW_\nu,\, \text{\eqref{eqn:FLP_con1}--\eqref{eqn:FLP_con3}}\Bigg\}
\end{equation}
and apply our C\&CG method to solve \eqref{eqn:FLP_dual_reformulation_MAD}. 

For the Gini deviation, existing literature \citep{Chen_Hooker:2023, Lejeune_Turner:2019, Shehadeh_Snyder:2021} introduce auxiliary variables $z_{i,i'}=|r_i-r_{i'}|$ and derive the following equivalent linear reformulation:
\begin{equation} \label{eqn:FLP_direct_reformulation_Gini_deviation}
    \min_{\xb,\,\yb,\,\rb,\,\zb} \Bigg\{ \gamma \sum_{i\in I} r_i + \frac{(1-\gamma)}{|I|}\sum_{i=1}^N \sum_{i'=1}^N z_{i,i'} \,\Bigg|\, z_{i,i'} \geq r_i-r_{i'},\, z_{i,i'} \geq r_{i'}-r_i,\, \forall \{i,i'\}\subseteq[N],\, \text{\eqref{eqn:FLP_con1}--\eqref{eqn:FLP_con3}}\Bigg\}.
\end{equation}
Recall from Section~\ref{sec:order_based_fairness_measures} that the Gini deviation is an order-based fairness measure. Thus, we can apply the results of Theorem~\ref{thm:FM_min_reformulation_order_based} to derive the following equivalent reformulation of problem \eqref{prob:FLP} with the Gini deviation:
\begin{equation} \label{eqn:FLP_dual_reformulation_Gini_deviation}
    \min_{\xb,\,\yb,\,\rb,\,\lambdab,\,\thetab} \Bigg\{ \gamma \sum_{i\in I} r_i + \frac{(1-\gamma)}{|I|} \one^\tp(\lambdab+\thetab) \,\Bigg|\, \lambda_i + \theta_{i'} \geq r_i w_{i'},\, \forall \{i,i'\}\subseteq[N],\, \text{\eqref{eqn:FLP_con1}--\eqref{eqn:FLP_con3}}\Bigg\},
\end{equation}
where $w_{i'}=2(2i'-|I|-1)$ for $i'\in[N]$. Note that in addition to variables $(\xb,\yb,\rb)$ and the constraints in \eqref{prob:FLP}, the traditional reformulation \eqref{eqn:FLP_direct_reformulation_Gini_deviation} requires $N^2$ variables $\zb$ and $N^2$ constraints on $\zb$, while our reformulation \eqref{eqn:FLP_dual_reformulation_Gini_deviation} involves only $2N$ variables $(\lambdab,\thetab)$ and $N^2$ constraints on $(\lambdab,\thetab)$. Thus, our reformulation has a significantly smaller number of variables, especially when $N$ is large. Note that the traditional reformulations and our unified reformulations of the fairness-promoting problem~\eqref{prob:FLP} under each convex fairness measure are equivalent. This means that optimal solutions to \eqref{eqn:FLP_direct_reformulation_MAD} and \eqref{eqn:FLP_dual_reformulation_MAD}, and similarly, optimal solutions to \eqref{eqn:FLP_direct_reformulation_Gini_deviation} and \eqref{eqn:FLP_dual_reformulation_Gini_deviation}, have the same objective value.

Next, we investigate the computational performance of the traditional approaches and ours. For illustrative purposes, we use data from \cite{Daskin:2013} to generate five random instances for each combination of $|I|\in\{40,50\}$ and $p\in\{|I|/3,|I|/4,|I|/5\}$, rounded to the nearest integer. To generate the instances, for a given $|I|$, we randomly select $|I|$ different locations in the ``2010 County Sorted 250'' data from \cite{Daskin:2013}, where we set $I=J$ as these locations. We compute the transportation cost using the Euclidean distance based on the location's latitude and longitude, and we set the demand as the location's population. Finally, we consider three different values of $\gamma\in\{0.4,0.3,0.2\}$. We do not observe significant differences between our approach and traditional ones with larger $\gamma$ (i.e., when more emphasis is placed on efficiency). In the experiments, we set the relative tolerance to $2\%$ and imposed a two-hour time limit.   Also, we include the lower and upper bounds \eqref{eqn:lambda_LBUB}--\eqref{eqn:theta_LBUB} on the dual variables $\lambdab$ and $\thetab$ in our reformulations \eqref{eqn:FLP_dual_reformulation_MAD} and \eqref{eqn:FLP_dual_reformulation_Gini_deviation}.

Tables \ref{table:FLP_comp_time_MAD}--\ref{table:FLP_comp_time_Gini_deviation} show the minimum (min), average (avg), and maximum (max) solution time (in seconds) for five generated instances solved using our formulations and traditional ones for the problem with the MAD and Gini deviation, respectively. We observe the following from these results. First, it is clear that using our unified reformulation and the proposed  C\&CG, we can solve all the generated instances significantly faster than the classical reformulation techniques.  Second, solution time increases as $\gamma$ decreases, i.e., when more emphasis is placed on fairness. This suggests that the problem becomes more challenging when seeking fairer decisions. Notably, solution times of traditional approaches are substantially longer than our solution times when $\gamma$ is smaller. Consider, for example, the instances with $|I|=50$ and $\gamma=0.2$. We can solve all the instances using our unified reformulations and C\&CG method within one hour. Specifically, the average solution using our approach ranges from from $19$ minutes to $45$ minutes for the MAD and $10$ minutes to $28$ minutes for the Gini deviation. In contrast, we are unable to solve the majority of these generated instances within two hours using traditional reformulations \eqref{eqn:FLP_direct_reformulation_MAD} and \eqref{eqn:FLP_direct_reformulation_Gini_deviation}. On average, the solution time using \eqref{eqn:FLP_direct_reformulation_MAD} is four times longer than solving our reformulation \eqref{eqn:FLP_dual_reformulation_MAD} with the C\&CG method for the MAD. Similarly, the solution time using \eqref{eqn:FLP_direct_reformulation_Gini_deviation} is three times longer than our reformulation \eqref{eqn:FLP_dual_reformulation_Gini_deviation} of the Gini deviation. For instances that were not solved by traditional formulations within two hours, the relative optimality gap reported by the solver at termination ranges from $4.1\%$ to $25.2 \%$. Third, the gap in solution time between the two approaches widens as $p$ decreases, i.e., as the problem becomes more constrained. 

These results demonstrate where significant improvements in performance can be gained with our proposed framework. In particular, it shows the computational advantages of our unified reformulation and the proposed C\&CG method over traditional ones when achieving fairness is of a higher priority in the decision-making process. In \ref{appdx:add_comp_results}, we investigate the computational performance of traditional approaches and ours on a set of larger instances. The results confirm these findings, further emphasizing the superior computational efficiency of our approach compared with traditional methods.

\begin{table}[t] 
\scriptsize
\center
\renewcommand{\arraystretch}{0.8}
{\caption{Solution time (in seconds) over five randomly generated instances with $\gamma\in\{0.4,0.3,0.2\}$ using the absolute deviation from mean (MAD). \textit{Note:} Solution times with `$>$' indicate that one or more instances cannot be solved within the imposed two-hour time limit. } \label{table:FLP_comp_time_MAD} }
\begin{tabular}{clrrrrrrrrrrrrrr}
\Xhline{1.0pt}
$|I|=40$    &                           &  & \multicolumn{3}{c}{$\gamma = 0.4$} &  & \multicolumn{3}{c}{$\gamma = 0.3$} &  & \multicolumn{3}{c}{$\gamma = 0.2$}                                                     \\ \cline{4-14}
            &                           &  & min        & avg       & max       &  & min        & avg       & max       &  & min  & avg                                    & max                                    \\ \cline{4-6} \cline{8-10} \cline{12-14}
$p =    8$  & C\&CG Method              &  & 1          & 3         & 5         &  & 6          & 22        & 45        &  & 74   & 600                                    & 1506                                   \\
            & Traditional Reformulation \eqref{eqn:FLP_direct_reformulation_MAD}  &  & 2          & 5         & 12        &  & 14         & 99        & 228       &  & 72   & \multicolumn{1}{r}{\textgreater{}3607} & \multicolumn{1}{r}{\textgreater{}7200} \\ \Xhline{0.5pt}
            &                           &  & min        & avg       & max       &  & min        & avg       & max       &  & min  & avg                                    & max                                    \\ \cline{4-6} \cline{8-10} \cline{12-14}
$p =    10$ & C\&CG Method              &  & 3          & 4         & 5         &  & 6          & 16        & 32        &  & 20   & 167                                    & 595                                    \\
            & Traditional Reformulation \eqref{eqn:FLP_direct_reformulation_MAD}  &  & 3          & 7         & 22        &  & 16         & 49        & 169       &  & 98   & 1497                                   & 6220                                   \\ \Xhline{0.5pt}
            &                           &  & min        & avg       & max       &  & min        & avg       & max       &  & min  & avg                                    & max                                    \\ \cline{4-6} \cline{8-10} \cline{12-14}
$p = 13$    & C\&CG Method              &  & 2          & 4         & 9         &  & 8          & 23        & 37        &  & 15   & 130                                    & 342                                    \\
            & Traditional Reformulation \eqref{eqn:FLP_direct_reformulation_MAD} &  & 3          & 10        & 16        &  & 16         & 41        & 106       &  & 26   & 475                                    & 1332                                   \\      \Xhline{1.0pt}
$|I|=50$    &                           &  & \multicolumn{3}{c}{$\gamma = 0.4$} &  & \multicolumn{3}{c}{$\gamma = 0.3$} &  & \multicolumn{3}{c}{$\gamma = 0.2$}                                                     \\ \cline{4-14}
            &                           &  & min        & avg       & max       &  & min        & avg       & max       &  & min  & avg                                    & max                                    \\ \cline{4-6} \cline{8-10} \cline{12-14}
$p =    10$ & C\&CG Method              &  & 3          & 4         & 5         &  & 32         & 80        & 213       &  & 798  & 2599                                   & \multicolumn{1}{r}{6657}               \\
            & Traditional Reformulation \eqref{eqn:FLP_direct_reformulation_MAD}  &  & 7          & 10        & 15        &  & 185        & 223       & 275       &  & 1637 & \multicolumn{1}{r}{\textgreater{}6114} & \multicolumn{1}{r}{\textgreater{}7200} \\ \Xhline{0.5pt}
            &                           &  & min        & avg       & max       &  & min        & avg       & max       &  & min  & avg                                    & max                                    \\ \cline{4-6} \cline{8-10} \cline{12-14}
$p =    13$ & C\&CG Method              &  & 3          & 7         & 14        &  & 22         & 53        & 115       &  & 621  & 1151                                   & \multicolumn{1}{r}{1946}               \\
            & Traditional Reformulation \eqref{eqn:FLP_direct_reformulation_MAD}  &  & 19         & 26        & 34        &  & 142        & 194       & 347       &  & 5888 & \multicolumn{1}{r}{\textgreater{}6830} & \multicolumn{1}{r}{\textgreater{}7200} \\ \Xhline{0.5pt}
            &                           &  & min        & avg       & max       &  & min        & avg       & max       &  & min  & avg                                    & max                                    \\ \cline{4-6} \cline{8-10} \cline{12-14}
$p = 17$    & C\&CG Method              &  & 5          & 9         & 13        &  & 66         & 115       & 256       &  & 713  & 1434                                   & 2564                                   \\
            & Traditional Reformulation \eqref{eqn:FLP_direct_reformulation_MAD}  &  & 23         & 28        & 36        &  & 76         & 202       & 378       &  & 538  & \multicolumn{1}{r}{\textgreater{}2788} & \multicolumn{1}{r}{\textgreater{}7200} \\             
\Xhline{1.0pt}
\end{tabular}
\end{table}

\begin{table}[t]  
\scriptsize
\center
\renewcommand{\arraystretch}{0.8}
{\caption{Solution time (in seconds) over five randomly generated instances with $\gamma\in\{0.4,0.3,0.2\}$ using the Gini deviation. \textit{Note:} Solution times with `$>$' indicate that one or more instances cannot be solved within the imposed two-hour time limit.  } \label{table:FLP_comp_time_Gini_deviation} }
\begin{tabular}{clrrrrrrrrrrrrrr}
\Xhline{1.0pt}
$|I|=40$    &                                &   & \multicolumn{3}{c}{$\gamma = 0.4$} & & \multicolumn{3}{c}{$\gamma = 0.3$} &  & \multicolumn{3}{c}{$\gamma = 0.2$}                                                     \\ \cline{4-14}
            &                                &              & min        & avg       & max       &  & min        & avg       & max       &  & min  & avg                                    & max                                    \\ \cline{4-6} \cline{8-10} \cline{12-14}
$p =    8$  & Our Reformulation \eqref{eqn:FLP_dual_reformulation_Gini_deviation}             &              & 21         & 27        & 35        &  & 28         & 51        & 90        &  & 37   & 343                                    & 1101                                   \\
            & Traditional Reformulation \eqref{eqn:FLP_direct_reformulation_Gini_deviation} & & 11         & 26        & 42        &  & 36         & 92        & 179       &  & 43   & 869                                    & 2861                                   \\  \Xhline{0.5pt}
            &                                &              & min        & avg       & max       &  & min        & avg       & max       &  & min  & avg                                    & max                                    \\ \cline{4-6} \cline{8-10} \cline{12-14}
$p =    10$ & Our Reformulation \eqref{eqn:FLP_dual_reformulation_Gini_deviation}             &              & 21         & 27        & 37        &  & 26         & 41        & 78        &  & 25   & 102                                    & 245                                    \\
            & Traditional Reformulation \eqref{eqn:FLP_direct_reformulation_Gini_deviation} & & 17         & 37        & 60        &  & 25         & 56        & 120       &  & 37   & \multicolumn{1}{r}{\textgreater{}1545} & \multicolumn{1}{r}{\textgreater{}7200} \\   \Xhline{0.5pt}
            &                                &              & min        & avg       & max       &  & min        & avg       & max       &  & min  & avg                                    & max                                    \\ \cline{4-6} \cline{8-10} \cline{12-14}
$p = 13$    & Our Reformulation \eqref{eqn:FLP_dual_reformulation_Gini_deviation}            &              & 21         & 25        & 29        &  & 23         & 33        & 52        &  & 26   & 45                                     & 85                                     \\
            & Traditional Reformulation \eqref{eqn:FLP_direct_reformulation_Gini_deviation}  & & 12         & 37        & 66        &  & 42         & 51        & 58        &  & 44   & 60                                     & 94                                     \\  \Xhline{1.0pt}                        
$|I|=50$    &                                &              & \multicolumn{3}{c}{$\gamma = 0.4$} &  & \multicolumn{3}{c}{$\gamma = 0.3$} &  & \multicolumn{3}{c}{$\gamma = 0.2$}                                                     \\ \cline{4-14}
            &                                &              & min        & avg       & max       &  & min        & avg       & max       &  & min  & avg                                    & max                                    \\ \cline{4-6} \cline{8-10} \cline{12-14}
$p =    10$ & Our Reformulation \eqref{eqn:FLP_dual_reformulation_Gini_deviation}             &              & 61         & 74        & 103       &  & 226        & 333       & 579       &  & 655  & 1652                                   & 3351                                   \\
            & Traditional Reformulation \eqref{eqn:FLP_direct_reformulation_Gini_deviation} & & 53         & 105       & 212       &  & 207        & 426       & 633       &  & 3477 & \multicolumn{1}{r}{\textgreater{}6512} & \multicolumn{1}{r}{\textgreater{}7200}\\ \Xhline{0.5pt}         
            &                                &              & min        & avg       & max       &  & min        & avg       & max       &  & min  & avg                                    & max                                    \\ \cline{4-6} \cline{8-10} \cline{12-14}
$p =    13$ & Our Reformulation \eqref{eqn:FLP_dual_reformulation_Gini_deviation}             &              & 42         & 60        & 75        &  & 57         & 205       & 315       &  & 424  & 629                                    & 994                                    \\
            & Traditional Reformulation \eqref{eqn:FLP_direct_reformulation_Gini_deviation} & & 73         & 89        & 101       &  & 95         & 206       & 335       &  & 272  & \multicolumn{1}{r}{\textgreater{}2017} & \multicolumn{1}{r}{\textgreater{}7200} \\ \Xhline{0.5pt}
            &                                &              & min        & avg       & max       &  & min        & avg       & max       &  & min  & avg                                    & max                                    \\ \cline{4-6} \cline{8-10} \cline{12-14}
$p = 17$    & Our Reformulation \eqref{eqn:FLP_dual_reformulation_Gini_deviation}             &              & 39         & 55        & 61        &  & 54         & 72        & 99        &  & 85   & 673                                    & 2194                                   \\
            & Traditional Reformulation \eqref{eqn:FLP_direct_reformulation_Gini_deviation} &  & 89         & 109       & 129       &  & 121        & 154       & 190       &  & 133  & 591                                    & 1511                                   \\  
\Xhline{1.0pt}
\end{tabular}
\end{table}

\subsection{Fair Resource Allocation} \label{subsec:expt_resource_allocation}

In this section, we illustrate our stability results in the context of a fair resource allocation problem. Specifically, we consider a decision-maker who wants to efficiently distribute $R$ resources among $N$ individuals (or groups) while ensuring fairness. Let $x_i$ denote the allocation of resources to individual $i \in [N]$. The benefit (or positive outcome) resulting from allocating $x_i$ resources to $i$ is quantified as $u_i=a_ix_i$, where $a_i$ may represent the efficiency per unit resource allocated to $i \in [N]$. The goal is to find allocation decisions that simultaneously maximize the total benefits (efficiency) and minimize a measure of unfairness in the distribution of benefits among individuals. Using this notation, we formulate the problem as
\begin{subequations}
\begin{align}
\underset{\xb,\,\ub}{\textup{minimize}} \,\quad &   -\one^\tp\ub+ \gamma\nu(\ub) \label{prob:resource_allocation_obj}\\
\textup{subject to} \quad &   u_i=a_ix_i,\quad\forall i\in[N],\\
&  \sum_{i=1}^N x_i \leq R,\\
&  x_i\geq 0,\quad\forall i\in[N].
\end{align}\label{prob:resource_allocation}%
\end{subequations}
To ensure that the quadratic growth condition holds (see Remark~\ref{rem:quad_growth_cond}), we add a very small quadratic term $\varepsilon\ub^\tp\ub$ in the objective function with $\varepsilon=10^{-8}$ so that the efficiency measure $f(\ub):=-\one^\tp\ub + \varepsilon\ub^\tp\ub$ is strongly convex \citep{Kakade_Tewari:2008, Shalev-Shwartz_Zhang:2012}. We observe that the results and conclusions remain consistent whether or not we incorporate this negligible quadratic term into the objective function.

Let us now investigate the effects of employing different convex fairness measures on the optimal solution and value of problem \eqref{prob:resource_allocation}. As the baseline measure $\nu_0$, we use the maximum absolute deviation from the mean (MaxMAD), i.e., $\nu_0=\nu^\text{MaxMAD}$. Then, we study the differences in optimal value and solution of \eqref{prob:resource_allocation} under $\nu_0$ and each measure $\nu_k$ from the set $\calV=\{\nu_k\}_{k \in K}$.  For illustrative purposes, we consider $\calV=\{\nu^\text{MAD},\nu^\text{GD},\nu^\text{SMaxPD}\}$, where $\nu^\text{MAD}$, $\nu^\text{GD}$, and $\nu^\text{SMaxPD}$ are the absolute deviation from mean, Gini deviation, and sum of maximum pairwise deviation, respectively; see Table~\ref{table:existing_FM} for the definition of these measures. To ensure that the efficiency and fairness measures are comparable and on the same scale, we normalize the fairness measure by dividing it by the maximum weight $w_\text{max}$ of its dual set $\calW_\nu$, i.e., $w_\text{max}=\sup_{\wb\in\calW_\nu}\norms{\wb}_\infty$.

Recall from Theorem~\ref{thm:stability_analysis} that for a fixed $\gamma$, the differences in optimal value and solution under two convex fairness measures $\nu_0$ and $\nu_k \in \calV$  are upper bounded by a constant multiplied by the Hausdorff distance between the dual sets of these measures. To illustrate this numerically, we set $R=100$ and consider instances with $N\in\{20,30,40\}$, $\gamma\in\{0,0.02,0.04,\dots,2\}$, and $a_i$ generated from a uniform distribution on $[1,10]$ for all $i\in[N]$. For each $N$, $\gamma$, and fairness measure, we generate and solve  $T=100$ instances. Let $\upsilon_0^t(N,\gamma)$ and $\xb_0^t(N,\gamma)$ respectively be the optimal value and optimal solution to \eqref{prob:resource_allocation} with $\nu=\nu_0$ in replication $t\in[T]$. Similarly, let $\upsilon_k^t(N,\gamma)$ and $\xb_k^t(N,\gamma)$ respectively be the optimal value and optimal solution to \eqref{prob:resource_allocation} with $\nu=\nu_k$ for instance $t\in[T]$. We compute the average differences in optimal value and solution to \eqref{prob:resource_allocation} under  $\nu_0$ and $\nu_k$ as $\texttt{val\_diff}_k(N,\gamma)=T^{-1}\sum_{t=1}^T |\upsilon_0^t(N,\gamma)-\upsilon_k^t(N,\gamma)|$ and $\texttt{sol\_diff}_k(N,\gamma)=T^{-1}\sum_{t=1}^T \norms{\xb_0^t(N,\gamma)-\xb_k^t(N,\gamma)}_2$, respectively. Finally, we compute the  Hausdorff distance between the dual sets of $\nu^\text{MaxMAD}$ and $\nu\in\{\nu^\text{MAD},\nu^\text{GD},\nu^\text{SMaxPD}\}$, which we call $d_{H}^{\text{MAD}}$, $d_{H}^{\text{GD}}$, and $d_{H}^{\text{SMaxPD}}$, respectively, for different $N$.

Figures~\ref{fig:expt_RA_Hausdorff_distance}, \ref{fig:expt_RA_diff_opt_val_fix_range}, and \ref{fig:expt_RA_diff_opt_sol_fix_range} illustrate $d_{H}^k$, $\texttt{val\_diff}_k(N,\gamma)$, and $\texttt{sol\_diff}_k(N,\gamma)$, respectively. We observe the following from these figures. First, the Hausdorff distance $d_{H}^{\text{SMaxPD}}$  between the dual sets of $\nu_0=\nu^\text{MaxMAD}$ and $\nu^\text{SMaxPD}$ is the smallest under all $N\in\{20,30,40\}$. It follows from Theorem~\ref{thm:stability_analysis} that the differences between the optimal values and solutions under $\nu_0$ and $\nu^\text{SMaxPD}$ could be smaller than those between $\nu_0$ and $\nu_k$ for $k\in\{\text{MAD},\text{GD}\}$. Indeed, Figures \ref{fig:expt_RA_diff_opt_val_fix_range} and \ref{fig:expt_RA_diff_opt_sol_fix_range} clearly show that $\texttt{val\_diff}_\text{SMaxPD}(N,\gamma)$ and $\texttt{sol\_diff}_\text{SMaxPD}(N,\gamma)$ are the smallest for all $N\in\{20,30,40\}$ and $\gamma\in[0,2]$. Second, the Hausdorff distance $d_{H}^{\text{GD}}$ between the dual sets of $\nu^\text{MaxMAD}$ and $\nu^\text{GD}$ increases with $N$ (the number of individuals or groups). Similarly, the Hausdorff distance $d_{H}^{\text{MAD}}$ between the dual sets of $\nu^\text{MaxMAD}$ and $\nu^\text{MAD}$ increases with $N$. Again, it follows from Theorem~\ref{thm:stability_analysis} that the differences between the optimal value and solution of \eqref{prob:FM_eff_min_obj} under $\nu^\text{MaxMAD}$ and $\nu^\text{GD}$ (and similarly under $\nu^\text{MaxMAD}$  and $d_{H}^{\text{MAD}}$) could be larger for larger $N$. Indeed, $\texttt{val\_diff}_k(N,\gamma)$ and $\texttt{sol\_diff}_k(N,\gamma)$ are generally larger under larger values of $N$ for $k\in\{\text{MAD},\text{GD}\}$  as illustrated in Figures~\ref{fig:expt_RA_diff_opt_val_fix_range} and~\ref{fig:expt_RA_diff_opt_sol_fix_range}. In contrast, $d_{H}^{\text{SMaxPD}}$ is relatively stable as $N$ increases, suggesting that the optimal values and solutions under $\nu^\text{MaxMAD}$ and $\nu^\text{SMaxPD}$ do not deviate significantly as $N$ increases, as illustrated in Figures~\ref{fig:expt_RA_diff_opt_val_fix_range} and~\ref{fig:expt_RA_diff_opt_sol_fix_range}.

These results demonstrate how one can use the stability results in Section~\ref{sec:stability_analysis} to investigate the stability of the optimal value and solution of the fairness-promoting optimization problem within their decision-making contexts. In particular, the results show how the Hausdorff distance between the dual sets of two different convex fairness measures serves as an indicator of the differences between the optimal value and solution obtained under these measures (Theorem~\ref{thm:stability_analysis}).

\begin{figure}[t]
    \centering
    \includegraphics[width=0.42\textwidth]{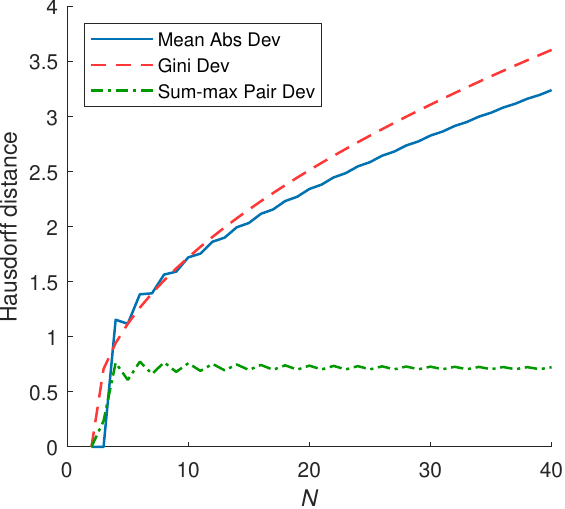}
    \caption{Hausdorff distance $d_H$ between the dual sets of $\nu^\text{MaxMAD}$ and $\nu\in\{\nu^\text{MAD},\nu^\text{GD},\nu^\text{SMaxPD}\}$}
    \label{fig:expt_RA_Hausdorff_distance}
\end{figure}

\begin{figure}[t]
    \centering
    \includegraphics[width=\textwidth]{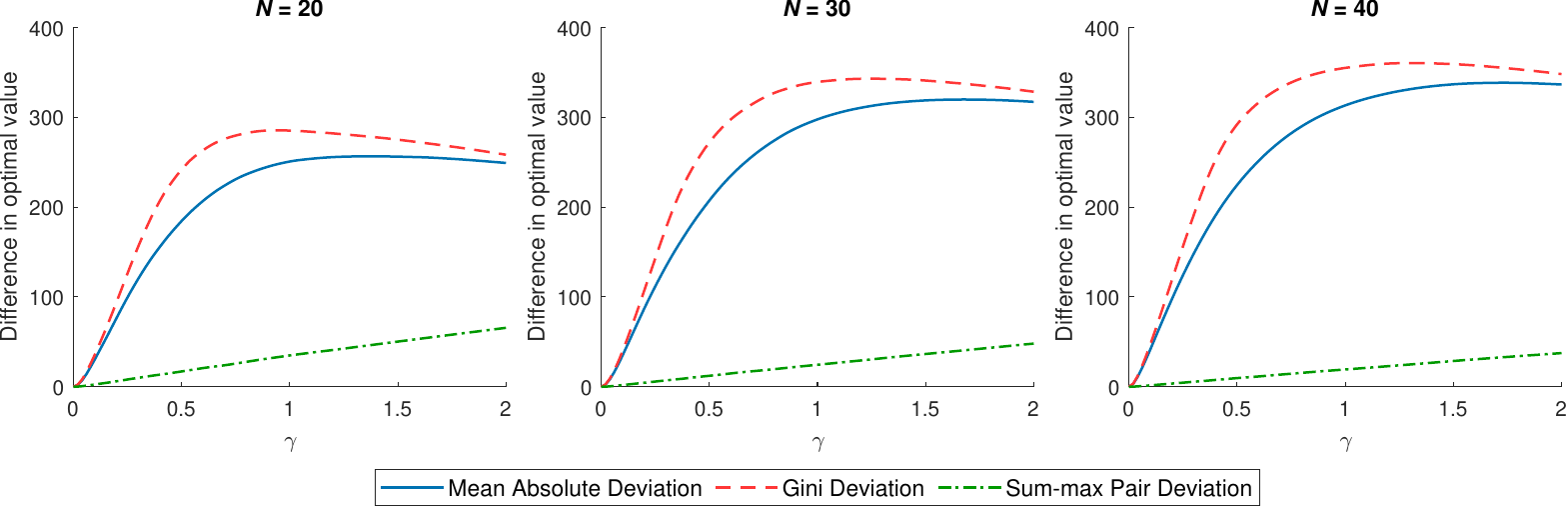}
    \caption{Average difference in optimal value $\texttt{val\_diff}_k(N,\gamma)$ for different values of $N$ and $\gamma$}
    \label{fig:expt_RA_diff_opt_val_fix_range}
\end{figure}

\begin{figure}[t]
    \centering
    \includegraphics[width=\textwidth]{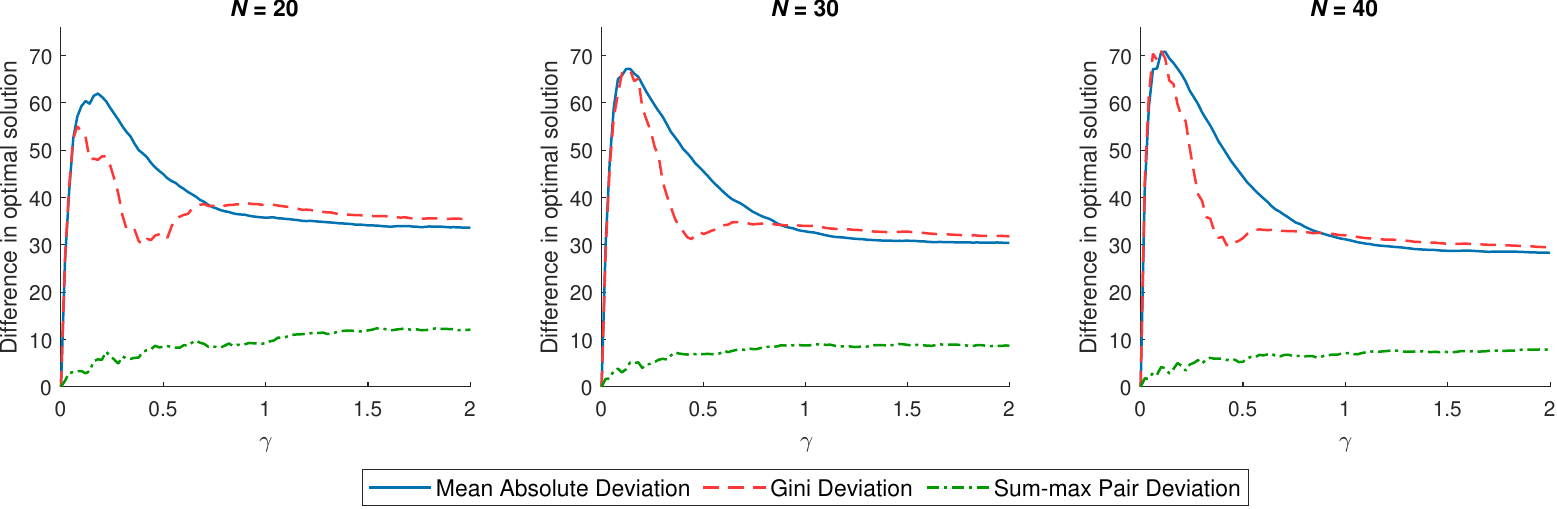}
    \caption{Average difference in optimal solution $\texttt{sol\_diff}_k(N,\gamma)$ for different values of $N$ and $\gamma$}
    \label{fig:expt_RA_diff_opt_sol_fix_range}
\end{figure}

\section{Conclusion}   \label{sec:conclusion}
In this paper, we propose a new unified framework for analyzing a class of convex fairness measures suitable for optimization contexts, including characterization of their theoretical properties, reformulations, and solution approaches. We first introduce a new class of order-based fairness measures, which serves as the building block of our proposed class of convex fairness measures. We analyze the order-based fairness measures and provide an axiomatic characterization for such measures. Then, we introduce our class of convex fairness measures, discuss their theoretical properties, and derive a dual representation of these measures. This dual representation allows for equivalently presenting any convex fairness measure as a robustified order-based fairness measure. Moreover, it provides a mechanism for investigating the equivalence of convex fairness measures from a geometric perspective based on their dual sets. In addition, using the dual representation, we propose a generic unified framework for optimization problems with a convex fairness measure objective or constraint, including reformulations and solution approaches. This provides decision-makers with a unified optimization tool to solve fairness-promoting optimization models with their favorite choice of convex fairness measures. Moreover, using the dual representation, we conduct a stability analysis on the choice of convex fairness measure in the objective of optimization models. Finally, we demonstrate the computational advantages of our unified framework over traditional ones and illustrate our stability results using a fair facility location and fair resource allocation problems, respectively. 

Our paper presents a new step toward deriving unified frameworks for analyzing fairness measures and formulating fairness-promoting optimization problems. We suggest the following areas for future steps. First, future studies could focus on extending the proposed framework to stochastic settings, where problem parameters, the fairness measure value, and the outcome vector  $\ub$ are random. This extension will allow researchers and practitioners to address fairness concerns in various application domains (e.g., facility location, scheduling, humanitarian logistics) where the problem involves random factors such as random travel time, demand, and service time. Second, in future directions, we aim to propose new classes of stochastic optimization approaches with fairness criteria that combine our work with different methodologies, such as (distributionally) robust optimization and their solution approaches. 
Finally, we acknowledge that our paper does not provide explicit guidelines on which class of fairness measures (order-based or convex) to adopt in a particular context. However, our theoretical and computational results in Sections~\ref{sec:opt_w_fairness_measures} and \ref{subsec:expt_facility_location} indicate that optimization problems with order-based fairness measures may be easier to solve than those with convex fairness measures. This observation highlights the need for further investigation into optimal solutions' properties under each class and their practical implications for fairness in specific application domains. We consider this an important avenue for future research.

\appendix
\newpage

\section{Properties of the Deviation-Based Fairness Measures in Table~\ref{table:existing_FM}} \label{appdx:eg_inequity_measures}

\subsection{Equivalence}

In this section, we investigate the equivalence between fairness measures in Table~\ref{table:existing_FM}. We say two fairness measures $\phi^1$ and $\phi^2$ are equivalent if there exists $\beta>0$ such that $\phi^1(\ub) = \beta\phi^2(\ub)$ for all $\ub\in\R^N$. Hence, replacing $\phi^2$ by $\phi^1$ as a fairness criterion in the fairness-promoting optimization models, e.g., models \eqref{prob:FM_eff_min_obj} and \eqref{prob:FM_eff_min_constraint}, essentially scales the weight on the fairness criterion $\gamma$ or the upper bound $\eta$ by a factor of $\beta>0$ (i.e., from $\phi^2$ to $\beta\phi^2$). It is easy verify that all of these measures are equivalent when $N=2$. However, in Proposition \ref{prop:equivalence_FM}, we show that only some of these measures are equivalent when $N\geq 3$, 

\begin{proposition} \label{prop:equivalence_FM}
Only the following equivalence relationships between fairness measures shown in Table~\ref{table:existing_FM} hold: (a) for any $N\geq3$, (i) and (iii) are equivalent; (b) for any $N\geq3$, (vi) and (vii) are equivalent; (c) when $N=3$, (i), (ii), and (iii) are equivalent; (d) when $N=3$, (iv), (vi), and (vii) are equivalent. The remaining pairs of fairness measures are not equivalent.
\end{proposition}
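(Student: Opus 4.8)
The plan is to split the argument into two parts: first establish the four positive equivalences (a)--(d), and then show that no other pair is equivalent, which together pin down the equivalence classes exactly. Throughout I would exploit that every measure in Table~\ref{table:existing_FM} is translation invariant, positively homogeneous, and symmetric, so I may assume without loss of generality that each test vector is written in nondecreasing order $u_{(1)}\le\dots\le u_{(N)}$ and, when convenient, translated so that $\ubar=0$.

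For the equivalences, (a) $\phi^\text{(i)}=\phi^\text{(iii)}$ is immediate, since the largest pairwise absolute difference $\max_{i,j}|u_i-u_j|$ is attained by the extreme entries and equals $u_{(N)}-u_{(1)}$. For (b), the key observation is that $t\mapsto\sum_{j}|t-u_j|$ is convex, so its maximum over $t\in\{u_1,\dots,u_N\}$ is attained at an endpoint $u_{(1)}$ or $u_{(N)}$; since the summands share a common sign at an endpoint, $\sum_j|u_{(N)}-u_j|=N(u_{(N)}-\ubar)$ and $\sum_j|u_{(1)}-u_j|=N(\ubar-u_{(1)})$, whence $\phi^\text{(vii)}=N\max_i|u_i-\ubar|=N\phi^\text{(vi)}$. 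For (c) and (d) I would specialize to $N=3$ and compute directly on the ordered, mean-zero representative $u_{(1)}\le u_{(2)}\le u_{(3)}$ with $u_{(1)}+u_{(2)}+u_{(3)}=0$: a short calculation gives $\phi^\text{(ii)}=4(u_{(3)}-u_{(1)})=4\phi^\text{(i)}$, and a two-case analysis on the sign of $u_{(2)}$ gives $\phi^\text{(iv)}=2\max\{-u_{(1)},u_{(3)}\}=2\phi^\text{(vi)}$, which, combined with (b), yields the triple equivalence of (iv),(vi),(vii).

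For the non-equivalences I would use that $\phi^a\sim\phi^b$ forces $\phi^a(\ub)/\phi^a(\vb)=\phi^b(\ub)/\phi^b(\vb)$ on every pair of non-constant vectors; hence it suffices to evaluate all eight measures on a few test vectors and show the resulting value-profiles are pairwise non-proportional across the claimed classes. I would take $\ub^1=(0,\dots,0,1)$, $\ub^2=(0,\dots,0,1,1)$, and $\ub^3=(0,\dots,0,1,2)$, and record the normalized ratios $\rho_2=\phi(\ub^2)/\phi(\ub^1)$ and $\rho_3=\phi(\ub^3)/\phi(\ub^1)$. For $N\ge4$ the pair $(\rho_2,\rho_3)$ is, class by class, $(1,2)$ for \{(i),(iii)\}, $(1,(2N-1)/N)$ for \{(viii)\}, $(2(N-2)/(N-1),(3N-5)/(N-1))$ for (ii), $(2(N-2)/(N-1),3(N-2)/(N-1))$ for (iv), $((N-2)/(N-1),(2N-3)/(N-1))$ for \{(vi),(vii)\}, and $(\sqrt{2(N-2)/(N-1)},\sqrt{(5N-9)/(N-1)})$ for (v); one checks these six pairs are mutually distinct for every $N\ge4$. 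For $N=3$ the reflection symmetry $\ub\mapsto c\one-\ub$ makes $\ub^2$ redundant, so I would use $\rho_3$ alone, obtaining $2$ for \{(i),(ii),(iii)\}, $3/2$ for \{(iv),(vi),(vii)\}, $5/3$ for \{(viii)\}, and $\sqrt3$ for (v), which are again mutually distinct.

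The main obstacle is the bookkeeping in this last step rather than any single hard idea. Two-valued test vectors alone do not separate \{(i),(iii)\} from (viii) (both scale by exactly $N$ on such vectors) nor (ii) from (iv) (their ratios coincide on $\ub^1,\ub^2$), which is precisely why the three-valued vector $\ub^3$ must be introduced; and even then isolated coincidences occur at small $N$ (for instance $\rho_3$ agrees for (i) and (iv) at $N=4$), so a single ratio never suffices and the full pair $(\rho_2,\rho_3)$ is required. The care therefore lies in verifying that the closed-form ratio expressions above are genuinely distinct for all $N$ in the relevant range, and in treating $N=3$ separately since the classes merge there.
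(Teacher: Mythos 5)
Your proposal is correct. The positive part (a)--(d) matches the paper's proof in substance: (a) is the same one-line observation, (c) and (d) are the same direct $N=3$ computations (the paper's case split on whether $\ubar$ lies in $[u_2,u_3]$ or $[u_1,u_2]$ is your sign-of-$u_{(2)}$ split in disguise), and your proof of (b) via convexity of $t\mapsto\sum_j|t-u_j|$ (so its maximum over $[u_{(1)},u_{(N)}]$ sits at an endpoint, where all summands share a sign) is a cleaner route to what the paper does by explicitly comparing the two candidate maxima $C_1$ and $C_2$. Where you genuinely depart from the paper is the non-equivalence half. The paper refutes each remaining pair with bespoke counterexamples: for each pair it exhibits two vectors on which one measure takes equal values while the other does not (its examples A--E, plus a patch B$'$ because example B happens to fail to separate (ii) from (viii) exactly at $N=5$), and then a table records which example kills which pair. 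You instead compute a single proportionality signature $(\rho_2,\rho_3)$ of every measure on three universal test vectors and show the six class signatures are pairwise distinct for $N\ge4$, falling back on $\rho_3$ alone at $N=3$, where you correctly observe that the reflection $\ub\mapsto\one-\ub$ composed with a permutation makes the two-valued vector useless (all eight measures are built from absolute differences, hence negation-invariant). I checked your closed-form ratios and the pairwise distinctness for all $N\ge4$: they hold, e.g.\ (ii) and (iv) share $\rho_2$ but their $\rho_3$ values differ by exactly $1/(N-1)$, and (v) is separated from (ii) and (iv) because $t\ne\sqrt{t}$ when $t=2(N-2)/(N-1)>1$; your flagged coincidences (the factor-$N$ scaling of (viii) against (i)/(iii) on all two-valued vectors, and the agreement of (i) and (iv) in $\rho_3$ at $N=4$) are exactly the reasons a three-valued vector and both coordinates are needed. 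Your scheme buys a uniform, table-driven argument at the cost of fifteen routine algebraic distinctness checks; the paper's buys very short individual verifications at the cost of six hand-tuned example families and per-pair bookkeeping, including the extra example required when one family degenerates at a particular $N$.
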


\begin{proof}
We first prove (a)--(d). Without loss of generality, we assume that $\ub$ is sorted in ascending order, i.e., $u_1\leq u_2\leq \dots \leq u_N$.
\begin{enumerate}[topsep=1pt,itemsep=-5pt]
    \item[(a)] Note that the maximum pairwise difference (iii) is equal to $u_N-u_1$, which is the same as the range (i).
    
    \item[(b)] We claim that $\max_{i\in[N]} \sum_{j=1}^N |u_i-u_j|=N\max_{i\in[N]} |u_i-\ubar|$. To prove this claim, note that we can write (vii) as
    $$\max_{i\in[N]} \sum_{j=1}^N |u_i-u_j|=\max\Bigg\{\sum_{i=2}^N u_i - (N-1)u_1,\, (N-1)u_N - \sum_{i=1}^{N-1} u_i \Bigg\}=: \max\{C_1, C_2\},$$
    where $C_1$ and $C_2$ represent the first and second expressions in the $\max$ operator, respectively. Consider the case when $C_1 \leq C_2$. This implies that $2\sum_{j=2}^{N-1} u_i\leq (N-2) (u_N+u_1)$. Adding $2(u_1+u_N)$ on both sides of the inequality results in $2\ubar \leq u_1+u_N$, implying that $\ubar - u_1 \leq u_N-\ubar$. Hence, we have 
    $$\max_{i\in[N]} \sum_{j=1}^N |u_i-u_j|=(N-1)u_N - \sum_{i=1}^{N-1} u_i = N(u_N-\ubar)=N\max_{i\in[N]}|u_i-\ubar|.$$
    A similar argument holds for the case when $C_1 \geq C_2$.
    
    \item[(c)] By (a), it suffices to show that (i) and (ii) are equivalent when $N=3$. By \cite{Mesa_et_al:2003}, we can write (ii) as
    $$\sum_{i=1}^3 \sum_{j=1}^3 |u_i-u_j| = \sum_{i=1}^3 2(2i-4) u_i = 4(u_3-u_1),$$
    which shows the equivalence between (i) and (ii).
    
    \item[(d)] By (b), it suffices to show that (iv) and (vi) are equivalent when $N=3$. First, we claim that if $\ubar\in[u_2,u_3]$, then $u_3-\ubar \geq \ubar - u_1$. Indeed, since $\ubar \geq u_2$, we have $u_1+u_3\geq 2u_2$. Adding $2(u_1+u_3)$ on both sides of the inequality results in $u_1+u_3 \geq 2\ubar$, implying that $u_3-\ubar \geq \ubar -u_1$. Therefore, when $\ubar\in[u_2,u_3]$, we have
    \allowdisplaybreaks
    \begin{align*}
        \sum_{i=1}^3|u_i-\ubar| &= (\ubar-u_1)+(\ubar-u_2)+(u_3-\ubar)= \frac{1}{3}(u_1+u_2+u_3) - u_1-u_2+u_3 \\
        & =\frac{2}{3}(2u_3 - u_1- u_2) = 2(u_3-\ubar) = 2\max_{i=1,2,3} |u_i-\ubar|.
    \end{align*}
    Similarly, in the case when $\ubar\in[u_1,u_2]$, we have $\ubar-u_1\geq u_3-\ubar$. Thus,
    \allowdisplaybreaks
    \begin{align*}
        \sum_{i=1}^3|u_i-\ubar| &= (\ubar-u_1)+(u_2-\ubar)+(u_3-\ubar)= -\frac{1}{3}(u_1+u_2+u_3) - u_1+u_2+u_3 \\
        & =\frac{2}{3}(-2u_1 + u_2 + u_3) = 2(\ubar-u_1) = 2\max_{i=1,2,3} |u_i-\ubar|.
    \end{align*}
    This proves the equivalence between (iv) and (vi).
\end{enumerate}

Finally, we show that the remaining pairs of fairness measures are not equivalent. To prove two measures $\phi$ and $\widetilde{\phi}$ are not equivalent, it suffices to find vectors $\ub^1$ and $\ub^2$ such that $\phi(\ub^1)=\phi(\ub^2)$ but $\widetilde{\phi}(\ub^1)\ne \widetilde{\phi}(\ub^2)$. In Table \ref{table:pf_equivalence_FM},  we provide examples showing that the remaining pairs of fairness measures are not equivalent. Specifically, example A shows that the pairs $\{$(iv,i), (iv,ii), (iv,v), (iv,viii), (vi,i), (vi,ii), (vi,v), (vi,viii)$\}$ are not equivalent; example B and B' shows that the pairs $\{$(viii,i), (viii,ii), (viii,v)$\}$ are not equivalent; example C shows that the pair (i, ii) is not equivalent when $N>3$ and (i,v) is not equivalent for all $N$; example D shows that the pair (ii, v) is not equivalent; example E shows that the pair (iv, vi) is not equivalent when $N>3$. Note that in example B, fairness measure (ii) at $\ub^1$ and $\ub^2$ are equal when $N=5$. Example B' shows that the pair (viii,ii) is not equivalent even when $N=5$. 
\begin{table}[t]  
\scriptsize
\center
\renewcommand{\arraystretch}{1}
\caption{Examples that some fairness measures are not equivalent} 
\begin{tabular}{c|l|cccccc}
\Xhline{1.0pt}
E.g. & Outcome vectors (in $\R^N$) & (i) & (ii) & (iv) & (v) & (vi) & (viii) \\ \hline
A     & $\ub^1=(1,\,2,\,2.5,\dots,\,2.5,\,4.5)$ & $3.5$ & $14+8(N-3)$ & $4$ & $\sqrt{6.5}$ &        $2$ & $9.5+2(N-3)$ \\
      & $\ub^2=(1,\,1,\,2,\dots,\,2,\,4)$       & $3$   & $12+8(N-3)$ & $4$ & $\sqrt{6}$   & $2$ & $9+2(N-3)$ \\ \hline
B     & $\ub^1=(2,\,5,\,5,\dots,\,5,\,9)$       & $7$   & $28+14(N-3)$ & / &                        $\sqrt{25-\frac{1}{N}}$ & / & $18+4(N-3)$ \\
      & $\ub^2=(2,\,2,\,4,\dots,\,4,\,8)$       & $6$   & $24+16(N-3)$ & /             &    $\sqrt{24}$            & /  & $18+4(N-3)$ \\ \hline
B'    & $\ub^1=(2,\,5,\,5,\,6,\,9)\in\R^5$             & / & $60$  & / &  /  &  / & $26$  \\               & $\ub^2=(2,\,2,\,4,\,4,\,8)\in\R^5$             & / & $56$  & / &  /  &  / & $26$  \\   \hline       
C     & $\ub^1=(2,\,5,\,\frac{16}{3},\dots,\,\frac{16}{3},\,9)$ & $7$ &                             $28+\frac{44}{3}(N-3)$ & / &       $\sqrt{\frac{74}{3}}$ & / & / \\
      & $\ub^2=(2,\,2,\,\frac{13}{3},\dots,\,\frac{13}{3},\,9)$ & $7$ &  $28+\frac{56}{3}(N-3)$ & / & $\sqrt{\frac{98}{3}}$ & / & / \\ \hline
D     & $\ub^1=(1,\,2,\,3,\dots,\,3,\,6)$ &                                                         / & $20+12(N-3)$ & / & $\sqrt{14}$ & / & / \\                                 
      & $\ub^2=3+(0,\,0,\,\frac{\sqrt{21}}{3},\dots,\,\frac{\sqrt{21}}{3},\,\sqrt{21})$  &    / & $4\sqrt{21}+\frac{8\sqrt{21}}{3}(N-3)$ & / & $\sqrt{14}$ & / & / \\      \hline
E     & $\ub^1=(1,\,7,\,7,\dots,\,7,\,8,\,12)$ & / & / & $12$ & / & $6$ & / \\
      & $\ub^2=(5,\,10,\,10.5,\dots,\,10.5,\,13,\,14)$ & / & / & $12$ & / & $5.5$ & / \\
\Xhline{1.0pt}
\end{tabular}\label{table:pf_equivalence_FM}
\end{table}
\end{proof}

Tables \ref{table:apdx_equivalence_FM_1}--\ref{table:apdx_equivalence_FM_2} summarize the equivalence of the fairness measures shown in Table~\ref{table:existing_FM}. The two groups of equivalent fairness measures proved in Proposition \ref{prop:equivalence_FM} are highlighted in red and blue with `Equiv.' representing equivalence in the tables. If a given pair of fairness measures is not equivalent, one of the corresponding counterexamples from A to E is stated (see Table \ref{table:pf_equivalence_FM}).

\begin{table}[t]  
\small
\center
\renewcommand{\arraystretch}{0.6}
\caption{Equivalence of fairness measures or counterexamples when $N=3$} 
\begin{tabular}{l|cccccccc}
\Xhline{1.0pt}
$N = 3$                    & {\color{red} i} & {\color{red} ii}     & {\color{red} iii}    & {\color{blue} iv} & v   & {\color{blue} vi}     & {\color{blue} vii}    & viii \\ \hline
{\color{red} i}   & /                        & {\color{red} Equiv.} & {\color{red} Equiv.} & A                         & C   & A                             & A                             & B    \\
{\color{red} ii}  &                          & /                             & {\color{red} Equiv.} & A                         & C/D & A                             & A                             & B    \\
{\color{red} iii} &                          &                               & /                             & A                         & C   & A                             & A                             & B    \\
{\color{blue} iv}  &                          &                               &                               & /                         & A   & {\color{blue} Equiv.} & {\color{blue} Equiv.} & A    \\
v                          &                          &                               &                               &                           & /   & A                             & A                             & B    \\
{\color{blue} vi}  &                          &                               &                               &                           &     & /                             & {\color{blue} Equiv.} & A    \\
{\color{blue} vii} &                          &                               &                               &                           &     &                               & /                             & A    \\
viii                       &                          &                               &                               &                           &     &                               &                               & /   \\
\Xhline{1.0pt}
\end{tabular}\label{table:apdx_equivalence_FM_1}
\end{table}

\begin{table}[t]  
\small
\center
\renewcommand{\arraystretch}{0.6}
\caption{Equivalence of fairness measures or counterexamples when $N>3$} 
\begin{tabular}{l|cccccccc}
\Xhline{1.0pt}
$N > 3$                    & {\color{red} i} & ii & {\color{red} iii}    & iv & v & {\color{blue} vi} & {\color{blue} vii}    & viii  \\ \hline
{\color{red} i}   & /                        & C  & {\color{red} Equiv.} & A                         & C & A                         & A                             & B     \\
ii  &                          & /                         & C      & A                         & D & A                         & A                             & B, B' \\
{\color{red} iii} &                          &                           & /                             & A                         & C & A                         & A                             & B     \\
iv  &                          &                           &                               & /                         & A & E  & E      & A     \\
v                          &                          &                           &                               &                           & / & A                         & A                             & B     \\
{\color{blue} vi}  &                          &                           &                               &                           &   & /                         & {\color{blue} Equiv.} & A     \\
{\color{blue} vii} &                          &                           &                               &                           &   &                           & /                             & A     \\
viii                       &                          &                           &                               &                           &   &                           &                               & /  \\
\Xhline{1.0pt}
\end{tabular}\label{table:apdx_equivalence_FM_2}
\end{table}

\subsection{Axioms}

In this section, we show that the fairness measures in Table~\ref{table:existing_FM} are convex fairness measures (see Section~\ref{sec:convex_fairness_measures}).
\begin{proposition} \label{prop:existing_FM}
The measures (i)--(viii) in Table \ref{table:existing_FM} are convex fairness measures, i.e., satisfying Axioms~\ref{axiom:continuity}, \ref{axiom:normalization}, \ref{axiom:symmetry}, \ref{axiom:Schur_convex}, \ref{axiom:trans_invariance}, \ref{axiom:pos_homo}, and \ref{axiom:convexity}.
\end{proposition}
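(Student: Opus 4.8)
The plan is to verify the seven axioms simultaneously for all eight measures by exploiting their common structural form, rather than treating each in isolation. Observe that every measure in Table~\ref{table:existing_FM} is built from the affine maps $\ub\mapsto u_i-u_j$ and $\ub\mapsto u_i-\ubar$, composed with the absolute value, combined by finite summation or pointwise maximum, and in the case of (v) composed with the Euclidean norm. First I would dispatch Axiom~\ref{axiom:continuity} by noting that the absolute value, $\max$, $\min$, finite sums, and $\norms{\cdot}_2$ are all continuous and that these operations preserve continuity. For Axiom~\ref{axiom:convexity}, the affine maps above composed with $|\cdot|$ (resp. $\norms{\cdot}_2$) are convex, and convexity is preserved under finite sums and pointwise maxima; hence all eight measures are convex. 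Axiom~\ref{axiom:pos_homo} follows because each building block is positively homogeneous of degree one---$|\alpha u_i-\alpha u_j|=\alpha|u_i-u_j|$ and $\norms{\alpha\,\cdot\,}_2=\alpha\norms{\cdot}_2$ for $\alpha>0$---and sums and maxima preserve degree-one homogeneity. Axiom~\ref{axiom:symmetry} holds because permuting the entries of $\ub$ merely permutes the index set over which the sums and maxima are taken and leaves $\ubar$ fixed.

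Next I would treat the two axioms reflecting the difference-based nature of the measures. For Axiom~\ref{axiom:trans_invariance}, replacing $\ub$ by $\ub+\alpha\one$ leaves each pairwise difference $u_i-u_j$ unchanged and, since $\overline{\ub+\alpha\one}=\ubar+\alpha$, leaves each centered coordinate $u_i-\ubar$ unchanged as well; thus every measure is invariant under the shift. Axiom~\ref{axiom:Schur_convex} then comes for free: having established Axioms~\ref{axiom:convexity} and~\ref{axiom:symmetry}, the remark following Axiom~\ref{axiom:convexity} (a convex symmetric function is Schur convex) immediately yields Schur convexity for all eight measures.

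The remaining axiom, Normalization (Axiom~\ref{axiom:normalization}), is the only one requiring measure-by-measure work, and I expect its converse implication to be the main (though modest) obstacle. Non-negativity is immediate, since each measure is a finite sum or maximum of non-negative quantities, or a Euclidean norm. The direction $\ub=\alpha\one\Rightarrow\phi(\ub)=0$ is clear, as all differences and centered coordinates then vanish. For the converse, $\phi(\ub)=0$ forces the vanishing of the underlying non-negative terms: for (i) and (iii) this reads $\max_i u_i=\min_i u_i$, giving $\ub$ constant; for (ii), (iv), (v), (vii), and (viii), the vanishing of a sum or maximum of non-negative terms forces every $|u_i-u_j|=0$ (or every $|u_i-\ubar|=0$), again giving $\ub=\ubar\one$; and for (vi), $\max_i|u_i-\ubar|=0$ forces $u_i=\ubar$ for all $i$. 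The one point needing a moment's care is recognizing measure (v) as $\norms{\ub-\ubar\one}_2$, the Euclidean norm of the centered vector, so that its convexity, homogeneity, and normalization all follow from the corresponding norm properties applied to the linear centering map $\ub\mapsto\ub-\ubar\one$.
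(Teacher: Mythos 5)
Your proposal is correct and follows essentially the same route as the paper's proof: both reduce Schur convexity (Axiom~\ref{axiom:Schur_convex}) to convexity plus symmetry via the standard result in \cite{Marshall_et_al:2011}, both recognize measure (v) as the Euclidean norm of the centered vector $\ub-\ubar\one$, and both treat the remaining axioms as routine verifications. The only difference is presentational: where the paper writes out the triangle-inequality computations measure by measure, you invoke the general closure of convexity and positive homogeneity under affine precomposition, finite sums, and pointwise maxima — the same mathematical content, packaged more uniformly (and you give slightly more detail on the normalization axiom, which the paper dismisses as easy).
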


\begin{proof}

It is easy to verify that measures (i)--(viii) satisfy Axioms~\ref{axiom:continuity}, \ref{axiom:normalization}, \ref{axiom:symmetry}, \ref{axiom:trans_invariance}, and \ref{axiom:pos_homo}. Next, note that if $\phi$ is convex and symmetric, then $\phi$ is Schur convex \citep{Marshall_et_al:2011}. Thus, it suffices to show that measures (i)--(viii) are convex. In the following, we assume that $\{\ub^1,\ub^2\}\subseteq\R^N$ and $\lambda\in[0,1]$. For measure (i), note that $u_i$ is a linear function in $\ub$. Since a maximum (resp. minimum) of linear functions is convex (resp. concave), measure (i) is also convex. For measure (ii), we have
\allowdisplaybreaks
\begin{align*}
\phi(\lambda\ub^1 + (1-\lambda)\ub^2) &= \sum_{i=1}^N \sum_{j=1}^N \Big| \big[\lambda u^1_i + (1-\lambda) u^2_i \big] - \big[\lambda u^1_j + (1-\lambda) u^2_j \big] \Big| \\
& \leq \lambda \sum_{i=1}^N \sum_{j=1}^N |u^1_i-u^1_j| + (1-\lambda)  \sum_{i=1}^N \sum_{j=1}^N |u^2_j-u^2_j| \\
& = \lambda \phi(\ub^1)+(1-\lambda) \phi(\ub^2).
\end{align*}
For measure (iii), following a similar argument in (ii), we have
\begin{align*}
\phi(\lambda\ub^1 + (1-\lambda)\ub^2) &= \max_{i\in[N]} \max_{j\in[N]} \Big| \big[\lambda u^1_i + (1-\lambda) u^2_i \big] - \big[\lambda u^1_j + (1-\lambda) u^2_j \big] \Big| \\
& \leq \max_{i\in[N]} \max_{j\in[N]} \Big\{ \lambda |u^1_i-u^1_j| + (1-\lambda) |u^2_i-u^2_j| \Big\} \\ 
& \leq \lambda \max_{i\in[N]} \max_{j\in[N]}  |u^1_i-u^1_j| + (1-\lambda)  \max_{i\in[N]} \max_{j\in[N]}  |u^2_i-u^2_j| \\
& = \lambda \phi(\ub^1)+(1-\lambda) \phi(\ub^2).
\end{align*}
For measure (iv), note that $\ubar=\lambda \ubar^1 + (1-\lambda)\ubar^2$, where $\ubar^k=(1/N)\sum_{i=1}^N u^k_i$ for $k\in\{1,2\}$. Convexity follows from a similar argument for measure (ii). For measure (v), we have

\begin{align*}
    \phi(\ub)=\sqrt{\sum_{i=1}^N (u_i-\ubar)^2}=\sqrt{\sum_{i=1}^N \bigg(u_i-\frac{1}{N}\one^\tp\ub\bigg)^2}&=\norm{\ub-\frac{1}{N}\one \one^\tp\ub}_2\\
    &= \norm{\Bigg(I-\frac{1}{N}\one\one^\tp\Bigg)\ub}_2 =: \norm{A\ub}_2, 
\end{align*}
where $I\in\R^{N\times N}$ is the identity matrix. Since $\ell_2$ norm is convex, it follows that $\phi$ is also convex \cite{Bertsekas:2015}. For measure (vi), one can easily verify its convexity by following the same logic used to verify the convexity of (iv). Similarly, one can verify the convexity of (vii) and (viii) by following a similar argument as in measures (ii) and (iii). 
\end{proof}

\section{Examples Related to Order-Based Fairness Measures} \label{appdx:eg_order_based}

\begin{example}[Fair resource allocation] \label{eg:fair_resource_order_based}

Consider the problem of allocating $R$ resources fairly to $N$ individuals, where we use $i\in[N]$ to denote each individual. Let $x_i$ be the number of resources allocated to $i$. The impact on $i$ is measured as $u_i=a_ix_i$, where $a_i$ may represent the efficiency per unit resource allocated to $i$. Moreover, there is a limit $K\leq R$ on the number of resources allocated to each individual. Let $N=6$ and $a_i=i$, for all $i \in [6]$. Suppose we use the order-based fairness measure $\nu_{\wb}(\ub)$ to ensure fair allocations with $w_i=2(2i-7)$, for $i\in[6]$. Then, our fair resource allocation optimization problem can be stated as
\begin{subequations}
\begin{align}
\underset{\xb,\,\ub}{\textup{minimize}}  & \ \  -10u_{(1)}-6u_{(2)}-2u_{(3)}+2u_{(4)}+6 u_{(5)}+10u_{(6)} \\
\textup{subject to}  &  \ \ u_1 = x_1,\, u_2=2x_2,\, \ldots, u_6=6x_6,\, \textstyle\sum_{i=1}^6 x_i = R,\, 0\leq x_i \leq K,\, \forall i\in[6].
\end{align}\label{Fair_allocation}%
\end{subequations}
Clearly, the optimization problem \eqref{Fair_allocation} prioritizes allocating resources to less advantaged individuals (i.e., those with lower efficiency). To illustrate, we solve \eqref{Fair_allocation} numerically (see Section \ref{sec:opt_w_fairness_measures}) with $R=25$. Figure \ref{fig:eg_optimal_allocation_decision} shows the optimal allocation decisions with different values of $K$. It is clear that more resources are allocated to less advantaged individuals. Even when $K$ decreases from $10$ to $7$, i.e, when a smaller upper bound is imposed on $x_i$, more resources are allocated to the less advantaged individuals with a priority to individual $2$, followed by $3$ to $6$. (Note that the resources allocated to individual $1$ decrease because of the decrease in the imposed upper bound $K$ on $x_i$).
\begin{figure}
    \centering
    \includegraphics[scale=0.72]{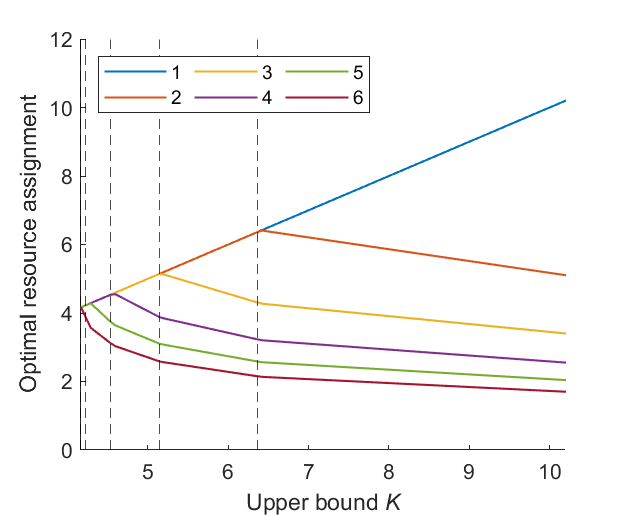}
    \caption{Optimal allocation decisions (each line corresponds to an individual)}
    \label{fig:eg_optimal_allocation_decision}
\end{figure}
\end{example}

\begin{example}[Range] \label{eg:axiom_characterization_range}
Since $\max_{i \in [N]}u_i -\min_{i \in [N]}=-u_{(1)}+u_{(N)}$, it follows that the range is an order-based fairness measure with weight vector $\wb':=(-1,0,\dots,0,1)^\tp\in\R^N$. By Theorem~\ref{thm:axiom_order_based}, we have that Axioms~\ref{axiom:normalization}, \ref{axiom:Schur_convex}, \ref{axiom:pos_homo}, and \ref{axiom:order_based} with $\wb=\wb'$ characterize the range. Specifically, Axiom~\ref{axiom:order_based} with $\wb=\wb'$ implies (a) there exists constant $C>0$ such that $\phi(\ub+\varepsilon\eb_1)=\phi(\ub)-\varepsilon C$ for all $\varepsilon\in[0,u_2-u_1]$ and  $\phi(\ub+\varepsilon\eb_N)=\phi(\ub)+\varepsilon C$ for all $\varepsilon\geq 0$; (b) for all $j\in[2,N-1]$, $\phi(\ub+\varepsilon\eb_j)=\phi(\ub)$ for all $\varepsilon\in[0,u_{j+1}-u_j]$. This indicates that the fairness measure's value decreases (resp. increases) by a constant if the smallest (resp. largest) entry of $\ub$ increases by a small amount $\varepsilon$, and it remains unchanged otherwise. This reflects the nature of the range that its value depends only on the value of the smallest ($u_{(1)}$) and largest ($u_{(N)}$) entries of $\ub$.
\end{example}

\begin{example}[Gini Deviation] \label{eg:axiom_characterization_Gini_deviation}
 We can rewrite Gini deviation as $\sum_{i=1}^N \sum_{j=1}^N |u_i-u_j| = \sum_{i=1}^N 2(2i-1-N) u_{(i)}=\sum_{i=1}^N w'_i u_{(i)}$, where $w'_i=2(2i-1-N)$ \citep{Mesa_et_al:2003}. It is easy to verify that $\wb'\in\calW$ (see Definition \ref{def:order_based_FM}), i.e., Gini deviation is an order-based fairness measure with weight vector $\wb'$. By Theorem \ref{thm:axiom_order_based},  we have that Axioms~\ref{axiom:normalization}, \ref{axiom:Schur_convex}, \ref{axiom:pos_homo}, and \ref{axiom:order_based} with $\wb=\wb'$ characterize Gini deviation. Specifically, Axiom~\ref{axiom:order_based} with $\wb=\wb'$ implies that there exists constant $C>0$ such that $\phi(\ub+\varepsilon\eb_j)=\phi(\ub)+\varepsilon C(2j-N-1)$ for any $\ub\in\Rup^N$, $j\in[N]$, and $\varepsilon\in[0,u_{j+1}-u_j]$. Thus, if we increase the $j$th smallest entry $u_{(j)}$ by a small amount $\varepsilon$, the change in fairness measure's value is proportional to $\varepsilon$ and scales linearly in $j$. In particular, if we increase $u_{(j)}$ for any $j\in[\lfloor (N-1)/2\rfloor]$, the fairness measure's value decreases since $2j-N+1<0$, and the decrease is the most pronounced when we increase the smallest entry ($u_{(1)}$).
\end{example}

\section{Comparison between the Class of Order-Based Fairness Measures and OWA Operators} \label{appdx:comparison_OWA}

Our proposed class of order-based fairness measures differs from OWA operators in the following aspects. First, the OWA operator (and order-median function) takes the form $F_{\wbt}(\ub)=\sum_{i=1}^N \wt_i u_{(i)}$, where the weight vector $\wbt\in\R^N$ satisfies $\wt_i\in[0,1]$ and $\sum_{i=1}^N \wt_i = 1$. In contrast, the weight vector $\wb\in\Rup^N$ in our order-based fairness measure satisfies $\sum_{i=1}^N w_i=0$ with $w_1<0$ and $w_N>0$ (see Definition \ref{def:order_based_FM}). Second, as detailed in \cite{Yager:1988}, the OWA operator is a way to aggregate different values of $u_i$, which was designed for multi-criteria decision-making problems and not for measuring unfairness or inequality. Indeed, if $\wbt\in\Rup^N$, which is a common assumption in the literature (see, e.g., \citealp{Blanco_et_al:2016, Nickel_Puerto:2006, Rodriguez-Chia_et_al:2000}), then we can write $F_{\wbt}$ as $F_{\wbt}(\ub)=f(\ub)+\nu_{\wb}(\ub)$, where $f(\ub)=N^{-1}\sum_{j=1}^N u_j$ and $\wb=\wbt-N^{-1}\one$. Here, $\nu_{\wb}$ is an order-based fairness measure when $\wbt\ne N^{-1}\one$. Thus, $F_{\wb}$ (OWA) can be viewed as a special case of the general problem defined in \eqref{prob:FM_eff_min_obj} with a specific choice of the inefficiency measure $f(\ub)=N^{-1}\sum_{j=1}^N u_j$ (i.e., mean of $\ub)$ and the fairness measure $\nu_{\wb}(\ub)$.  In contrast, our framework allows adopting any classical inefficiency measure $f(\ub)$ in \eqref{prob:FM_eff_min_obj}.

\section{Relative Convex Fairness Measures} \label{appdx:relative_convex_fairness_measures}

In this section, building on our analyses of the convex fairness measure proposed in Section~\ref{sec:convex_fairness_measures}, we introduce its relative counterpart and study its properties. We relegate all proofs to \ref{appdx:proofs}. Note that while absolute measures are characterized by translation invariance (Axiom~\ref{axiom:trans_invariance}), relative measures are characterized by the following scale invariance axiom \citep{Chakravarty:1999, Mussard_Mornet:2019}. 

\setaxiomtag{SI}
\begin{axiom}[Scale Invariance]\label{axiom:scale_invariance}
$\phi(\alpha\ub)=\phi(\ub)$ for any $\alpha>0$.
\end{axiom}

Axiom~\ref{axiom:scale_invariance} guarantees that if all $u_i$ are multiplied by a positive scalar, then the value of the \textit{relative} fairness measure will not change (i.e., the inequality across the population would not change). This implies that a change in the measurement unit of the vector $\ub$ (e.g., from US dollar to British pound if $\ub$ represents costs) will not affect the fairness measure, i.e., the relative fairness measure is unitless.

We are now ready to introduce the relative counterpart of the class of convex fairness measures, which we call relative convex fairness measures.

\begin{definition} \label{def:relative_convex_fairness_measure}
Let $\nu:\R^N_+\rightarrow\R$ be a convex fairness measure and $g:\R^N_+\rightarrow\R$ be a continuous function. A fairness measure $\rho:\R^N_+\rightarrow\R$ is a relative convex fairness measure if  
\begin{equation}\label{eqn:rel_CFM}
\rho(\ub) = \frac{\nu(\ub)}{g(\ub)}
\end{equation}
and $\rho$ satisfies Axioms \ref{axiom:symmetry}, \ref{axiom:Schur_convex}, \ref{axiom:scale_invariance}, and the following Axiom~\ref{axiom:normalization_rel}. Here, we adopt the conventions $0/0=0$ and $a/0=\infty$ for any $a>0$.
\end{definition}

\setaxiomtag{NR}
\begin{axiom}[Normalization-Relative]\label{axiom:normalization_rel}
$\phi(\ub)\in[0,1]$ for any $\ub\in\R_+^N$ and $\phi(\ub)=0$ if and only if $\ub=\alpha \one$ for some $\alpha\in\R_+$.
\end{axiom}

We make a few remarks in order. First, we consider non-negative $\ub$, which is common in many practical applications and consistent with prior studies (e.g., $\ub$ could be patient waiting time, distances from demand nodes to open facilities, income, etc; see \citealp{Ahmadi-Javid_et_al:2017, Chakravarty:1999, Marynissen_Demeulemeester:2019, Mussard_Mornet:2019, Pinedo:2016}). Second, Axiom~\ref{axiom:normalization_rel} is the relative counterpart of the normalization axiom discussed in Section~\ref{sec:fairness_measures}. Specifically, we require that the relative convex fairness $\rho$ is normalized such that $\rho(\ub)\in[0,1]$ for any $\ub\in\R^N_+$, which is consistent with the literature on relative fairness measures \citep{Donaldson_Weymark:1980, Mehran:1976}. In particular, with a suitable choice of $g$, we have that $\rho=0$ represents perfect equality while $\rho=1$ represents perfect inequality (see Proposition~\ref{prop:perfect_inequality_rel_CFM} and Corollary~\ref{coro:perfect_inequity}). Finally, since both $\nu$ and $g$ are continuous, $\rho$ is continuous on $\{\ub\in\R_+^N\mid g(\ub)\ne0\}$ by definition in \eqref{eqn:rel_CFM}. For example, if $g(\ub)=C \one^\tp\ub$ for some constant $C>0$, then $\rho(\ub)$ is continuous on $\R^N_+$ except at $\ub=\zero$ (see Theorem~\ref{thm:sufficient_rel_CFM}).

It is clear from the definition of $\rho$ in \eqref{eqn:rel_CFM} that one should carefully choose the normalization function $g$ such that $\rho$ satisfies the desired set of axioms (i.e.,  \ref{axiom:normalization_rel}, \ref{axiom:symmetry}, \ref{axiom:Schur_convex}, and \ref{axiom:scale_invariance}). Hence, we next derive conditions and provide guidelines for choosing the normalization function $g$.  First, in Theorem \ref{thm:necessary_rel_CFM}, we provide conditions on $g$ such that $\rho$ satisfies Axioms~\ref{axiom:normalization_rel}, \ref{axiom:symmetry}, and \ref{axiom:scale_invariance}.

\begin{theorem} \label{thm:necessary_rel_CFM}
Let $\nu:\R^N_+\rightarrow\R$ be a convex fairness measure and $g:\R^N_+\rightarrow\R$ be a continuous function. Then, the relative convex fairness measure $\rho=\nu/g$ satisfies Axioms~\ref{axiom:normalization_rel}, \ref{axiom:symmetry}, and \ref{axiom:scale_invariance} if and only if $g$ is symmetric and positive homogeneous with $g\geq \nu$.
\end{theorem}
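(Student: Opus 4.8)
The plan is to prove the biconditional by treating the two directions separately: sufficiency reduces to pointwise algebra with the division conventions, while necessity requires recovering the properties of $g$ from the quotient $\rho$ and then extending them off the set where $\nu$ vanishes by a continuity/density argument.

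For sufficiency, assume $g$ is symmetric, positive homogeneous, and $g\geq\nu$. I would first verify Axioms~\ref{axiom:symmetry} and~\ref{axiom:scale_invariance} directly. Since $\nu$ satisfies Axiom~\ref{axiom:symmetry} and $g$ is assumed symmetric, we have $\nu(P\ub)/g(P\ub)=\nu(\ub)/g(\ub)$ for every permutation matrix $P$; the convention $0/0=0$ is harmless because equal numerators are divided by equal denominators. Scale invariance is the same computation: positive homogeneity of $\nu$ (Axiom~\ref{axiom:pos_homo}) and of $g$ gives $\nu(\alpha\ub)/g(\alpha\ub)=\alpha\nu(\ub)/(\alpha g(\ub))=\nu(\ub)/g(\ub)$ for $\alpha>0$, and each branch of the convention is itself scale invariant. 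For Axiom~\ref{axiom:normalization_rel}, the sandwich $0\leq\nu\leq g$ (using $\nu\geq0$ from Axiom~\ref{axiom:normalization} and the hypothesis $g\geq\nu$) yields $\rho(\ub)\in[0,1]$ whenever $g(\ub)>0$, and when $g(\ub)=0$ it forces $\nu(\ub)=0$, so $\rho(\ub)=0/0=0\in[0,1]$; in all cases $\rho(\ub)=0$ is equivalent to $\nu(\ub)=0$, which by Axiom~\ref{axiom:normalization} for $\nu$ is equivalent to $\ub=\alpha\one$.

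For necessity, assume $\rho=\nu/g$ satisfies Axioms~\ref{axiom:normalization_rel}, \ref{axiom:symmetry}, and~\ref{axiom:scale_invariance}. The organizing observation is that Axiom~\ref{axiom:normalization} for $\nu$ identifies $\{\ub\in\R_+^N:\nu(\ub)=0\}$ with the ray $\{\alpha\one:\alpha\geq0\}$, so $D:=\{\ub\in\R_+^N:\nu(\ub)>0\}$ is open and, for $N\geq2$, dense in $\R_+^N$. On $D$ I would first show $g>0$: if $g(\ub)=0$ while $\nu(\ub)>0$ then $\rho(\ub)=\infty$, violating the bound $\rho\in[0,1]$ of Axiom~\ref{axiom:normalization_rel}. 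With $\nu>0$ and $g>0$ on $D$, the quotient is a genuine positive real and can be cancelled: symmetry of $\rho$ plus symmetry of $\nu$ gives $g(P\ub)=g(\ub)$, scale invariance of $\rho$ plus $\nu(\alpha\ub)=\alpha\nu(\ub)$ gives $g(\alpha\ub)=\alpha g(\ub)$, and $\rho\leq1$ gives $g(\ub)\geq\nu(\ub)$, all on $D$.

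The main obstacle is the final step: upgrading these three identities from the dense set $D$ to all of $\R_+^N$, since on the ray $\{\alpha\one\}$ the quotient $\rho$ encodes no information about $g$. Here I would invoke continuity of $g$ (and of $\nu$): for a point $\ub_0=\alpha\one$, take $\ub_k\to\ub_0$ with $\ub_k\in D$, so that $g(P\ub_k)=g(\ub_k)$, $g(\beta\ub_k)=\beta g(\ub_k)$, and $g(\ub_k)\geq\nu(\ub_k)$ all pass to the limit (using $P\ub_k\to P\ub_0$ and $\beta\ub_k\to\beta\ub_0$), giving symmetry and positive homogeneity of $g$ everywhere together with $g(\ub_0)\geq\nu(\ub_0)=0$. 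This yields $g\geq\nu$ on all of $\R_+^N$ and completes the proof. The only points requiring care are the density of $D$ (hence $N\geq2$) and checking that the limiting arguments are applied to the continuous objects $g$ and $\nu$; every other step is algebraic cancellation licensed by the strict positivity of $\nu$ and $g$ on $D$.
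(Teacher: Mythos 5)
Your proof is correct and takes essentially the same route as the paper's: sufficiency by direct verification with the division conventions, and necessity by cancelling the quotient on the set where $\nu>0$ (deducing $g>0$ there from the bound $\rho\leq 1$), then extending symmetry, positive homogeneity, and $g\geq\nu$ to the ray $\{\alpha\one\mid\alpha\geq 0\}$ via a continuity/density limiting argument. The paper does exactly this, merely using the explicit approximating sequence $(\alpha+1/k,\alpha,\dots,\alpha)^\tp$ in place of your generic sequence $\ub_k\to\ub_0$ with $\ub_k$ off the ray.
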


Theorem~\ref{thm:necessary_rel_CFM} provides sufficient and necessary conditions on $g$ for which the relative convex fairness measure $\rho$ satisfies Axioms~\ref{axiom:normalization_rel}, \ref{axiom:symmetry}, and \ref{axiom:scale_invariance}. Specifically, $g$ belongs to the class of continuous and positive homogeneous functions with $g\geq\nu$. Note that although the convex fairness measure $\nu$ is Schur convex, the relative counterpart might not. Thus, in addition to Theorem~\ref{thm:necessary_rel_CFM}, we need to impose conditions on the normalization function $g$ to ensure that $\rho$ is Schur convex. Theorem~\ref{thm:sufficient_rel_CFM} provides a sufficient condition on $g$ such that $\rho$ is Schur convex, and hence, satisfies Definition~\ref{def:relative_convex_fairness_measure}.  

\begin{theorem} \label{thm:sufficient_rel_CFM}
 Let $\nu:\R^N_+\rightarrow\R$ be a convex fairness measure and $g:\R^N_+\rightarrow\R$ be a continuous function. If $g(\ub)=CN\ubar$ with $\ubar = N^{-1}\sum_{i=1}^N u_i$ for any $C\geq \sup_{\wb\in\calW_\nu}\norms{\wb_+}_\infty$, where $\wb_+=\max\{\wb,\zero\}$ with the maximum taken component-wisely, then the relative convex fairness measure $\rho=\nu/g$ satisfies Axioms~\ref{axiom:normalization_rel}, \ref{axiom:symmetry}, \ref{axiom:Schur_convex}, and \ref{axiom:scale_invariance}.
\end{theorem}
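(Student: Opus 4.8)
The plan is to split the four axioms into two groups: Axioms~\ref{axiom:normalization_rel}, \ref{axiom:symmetry}, and \ref{axiom:scale_invariance} will follow by reducing to the already-proved Theorem~\ref{thm:necessary_rel_CFM}, while Axiom~\ref{axiom:Schur_convex} will be handled directly by exploiting the linearity of the chosen normalizer. First I would record that $g(\ub)=CN\ubar=C\one^\tp\ub$ is linear, hence automatically continuous, symmetric (since $\one^\tp P\ub=\one^\tp\ub$ for every permutation matrix $P$), and positive homogeneous ($g(\alpha\ub)=\alpha g(\ub)$ for $\alpha>0$). By Theorem~\ref{thm:necessary_rel_CFM}, it then remains only to verify the domination condition $g\geq\nu$ on $\R^N_+$ in order to conclude that $\rho=\nu/g$ satisfies Axioms~\ref{axiom:normalization_rel}, \ref{axiom:symmetry}, and \ref{axiom:scale_invariance}.

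To establish $g\geq\nu$, I would invoke the dual representation $\nu(\ub)=\sup_{\wb\in\calW_\nu}\sum_{i=1}^N w_i u_{(i)}$ from Theorem~\ref{thm:convex_FM_dual}. Fixing $\ub\in\R^N_+$ so that each ordered entry satisfies $u_{(i)}\geq 0$, for any $\wb\in\calW_\nu$ I would bound each weight by its positive part and then by the largest positive part, giving $\sum_{i=1}^N w_i u_{(i)}\leq\sum_{i=1}^N (w_i)_+ u_{(i)}\leq\norms{\wb_+}_\infty\sum_{i=1}^N u_{(i)}=\norms{\wb_+}_\infty\,\one^\tp\ub$. Since $C\geq\sup_{\wb\in\calW_\nu}\norms{\wb_+}_\infty$, each term is at most $C\one^\tp\ub=g(\ub)$; taking the supremum over $\wb\in\calW_\nu$ then yields $\nu(\ub)\leq g(\ub)$, as required.

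For Schur convexity, I would use the key structural fact that majorization preserves the component sum: if $\ub^1\preceq\ub^2$ then $\one^\tp\ub^1=\one^\tp\ub^2$, whence $g(\ub^1)=g(\ub^2)$. Because $\nu$ is itself Schur convex (it is a convex fairness measure and so satisfies Axiom~\ref{axiom:Schur_convex}), we have $\nu(\ub^1)\leq\nu(\ub^2)$. When the common sum is positive, $g(\ub^1)=g(\ub^2)>0$, and dividing $\nu(\ub^1)\leq\nu(\ub^2)$ by this common positive value gives $\rho(\ub^1)\leq\rho(\ub^2)$; when the sum is zero, both vectors equal $\zero$ (the only nonnegative vector with zero sum), so $\rho(\ub^1)=\rho(\ub^2)=0$ under the convention $0/0=0$. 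In either case $\rho$ is Schur convex.

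I expect the domination bound $g\geq\nu$ to be the main technical step, as it is the one place where both the specific lower bound on $C$ and the dual representation of Theorem~\ref{thm:convex_FM_dual} are genuinely needed; by contrast, Schur convexity turns out to be almost immediate once one notices that the linear normalizer $g$ is constant along any majorization-comparable pair, so that the ratio inherits the monotonicity of $\nu$.
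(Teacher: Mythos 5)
Your proof is correct, and its skeleton matches the paper's: both note that $g(\ub)=CN\ubar=C\one^\tp\ub$ is symmetric and positively homogeneous, prove the domination $g\geq\nu$ on $\R^N_+$, invoke the sufficiency direction of Theorem~\ref{thm:necessary_rel_CFM} to obtain Axioms~\ref{axiom:normalization_rel}, \ref{axiom:symmetry}, and \ref{axiom:scale_invariance}, and then treat Axiom~\ref{axiom:Schur_convex} separately. The two technical steps, however, are carried out differently. For $g\geq\nu$, the paper frames the bound as the claim that $(0,\dots,0,\gamma)^\tp$ maximizes $\nu$ over $\{\ub\in\R^N_+\mid \one^\tp\ub=\gamma\}$ with value $\gamma\sup_{\wb\in\calW_\nu}\norms{\wb_+}_\infty$, a claim it declares ``easy to show''; your chain $\sum_{i=1}^N w_iu_{(i)}\leq\sum_{i=1}^N (w_i)_+u_{(i)}\leq\norms{\wb_+}_\infty\,\one^\tp\ub$ is precisely the verification the paper omits, so on this step you are filling in detail rather than diverging. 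For Schur convexity the routes genuinely differ: the paper works from the \emph{convexity} of $\nu$ together with the T-transform characterization of Schur convexity (symmetry plus $\rho\big(\lambda\ub+(1-\lambda)\ub_{12}\big)\leq\rho(\ub)$, citing Stepniak), exploiting that the swap-and-average operation preserves $\one^\tp\ub$; you instead use the \emph{Schur convexity} of $\nu$ itself, which is already guaranteed by the paper's remark that Axioms~\ref{axiom:convexity} and \ref{axiom:symmetry} imply Axiom~\ref{axiom:Schur_convex}, together with the observation that majorization preserves the component sum, so that $g$ is constant on any majorization-comparable pair and the ratio inherits the monotonicity of $\nu$. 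Your argument is shorter, works directly from the definition of Axiom~\ref{axiom:Schur_convex}, and avoids re-deriving a fact the framework already supplies; the paper's version is self-contained modulo the Stepniak citation but duplicates work. You also correctly dispose of the degenerate case $\one^\tp\ub=0$, which in $\R^N_+$ forces $\ub=\zero$ and is handled by the convention $0/0=0$ --- a corner case the paper's proof passes over silently.
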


Let us provide an intuitive justification for choosing $g(\ub)=CN\ubar$ as suggested by Theorem~\ref{thm:sufficient_rel_CFM}. Let $\ub\in\R^N_+$ with $\one^\tp\ub=\gamma>0$, i.e., the total impact on the $N$ subjects is $\gamma$. Suppose that we can re-distribute the $\gamma$ units of impact among the subjects. Then, $\ub_\text{min}=\gamma/N\cdot\one$ minimizes $\nu$ with $\nu_\text{min}=\nu(\ub_\text{min})=0$, and $\ub_\text{max}=(0,\dots,0,\gamma)^\tp$ maximizes $\nu$ with $\nu_\text{max}=\nu(\ub_\text{max})=\gamma \sup_{\wb\in\calW_\nu} \norms{\wb_+}_\infty$. Thus, an intuitive way to normalize $\nu(\ub)$ such that $0 \leq \rho(\ub) \leq 1$ is as follows:
\begin{equation} \label{eqn:rel_CFM_linear_normalization_intuition}
\frac{\nu(\ub)-\nu_\text{min}}{\nu_\text{max}-\nu_\text{min}}=\frac{\nu(\ub)}{\gamma \sup_{\wb\in\calW_\nu} \norms{\wb_+}_\infty}= \frac{\nu(\ub)}{\one^\tp\ub  \cdot\sup_{\wb\in\calW_\nu} \norms{\wb_+}_\infty}= \frac{\nu(\ub)}{(N\sup_{\wb\in\calW_\nu} \norms{\wb_+}_\infty)\cdot\ubar }.    
\end{equation}
The denominator in \eqref{eqn:rel_CFM_linear_normalization_intuition} takes the same form of $g$ suggested in Theorem~\ref{thm:sufficient_rel_CFM}. Indeed, it is also common in the existing literature to normalize a fairness measure by a function of the mean $\ubar$ of $\ub$ \citep{Chakravarty:1999, Mussard_Mornet:2019, Zheng:2007}.  

Note that there are no general necessary conditions on $g$ for the relative convex fairness measure $\rho$ to be Schur convex. In Example~\ref{eg:SCV_rho}, we demonstrate that $\rho$ can be Schur convex even for some non-convex and non-concave  $g$.

\begin{example} \label{eg:SCV_rho} 
Consider the two-dimensional inequality measure $\nu(u_1,u_2)=|u_1-u_2|$ and the normalization function $g(u_1,u_2)=2\min\{|u_1-u_2|,(u_1+u_2)/2\}$. By definition, $g$ is symmetric and positive homogeneous with $g\geq \nu$, and hence, $\rho$ is symmetric and scale invariant by Theorem~\ref{thm:necessary_rel_CFM}. Note that $g$ is neither convex nor concave, which can be observed in Figure~\ref{fig:eg_Schur_convex_rel_measure} (the left plot). However, we can show that the relative convex fairness measure $\rho(u_1,u_2)=\nu(u_1,u_2)/g(u_1,u_2)$ is Schur convex. Indeed, it is straightforward to show that the level sets of $\rho$, denoted as $\calL_\beta=\{(u_1,u_2)\in\R^2_+\mid\rho(u_1,u_2)\leq \beta\}$ for all $\beta\geq 0$, are convex. Specifically,
$$\calL_\beta = \begin{cases}
\big\{(u_1,u_2)\in\R^2_+\mid u_1=u_2\big\}, &\text{if } \beta\in[0,1/2), \\
\displaystyle\Bigg\{(u_1,u_2)\in\R^2_+\,\,\Bigg|\,\, \frac{1-\beta}{1+\beta}u_1\leq u_2\leq \frac{1+\beta}{1-\beta} u_2\Bigg\}, &\text{if } \beta\in[1/2,1), \\
\R^2_+, &\text{if } \beta\in[1,\infty).
\end{cases}$$
In Figure~\ref{fig:eg_Schur_convex_rel_measure} (the right plot), we show the contours of $\rho$, where the black dotted line represents the level set with $\beta\in[0,1/2)$. This shows that $\rho$ is quasi-convex, which implies that $\rho$ is Schur convex (see Chapter 3 of \citealp{Marshall_et_al:2011}).
\begin{figure}
    \centering 
    \includegraphics[scale=0.75]{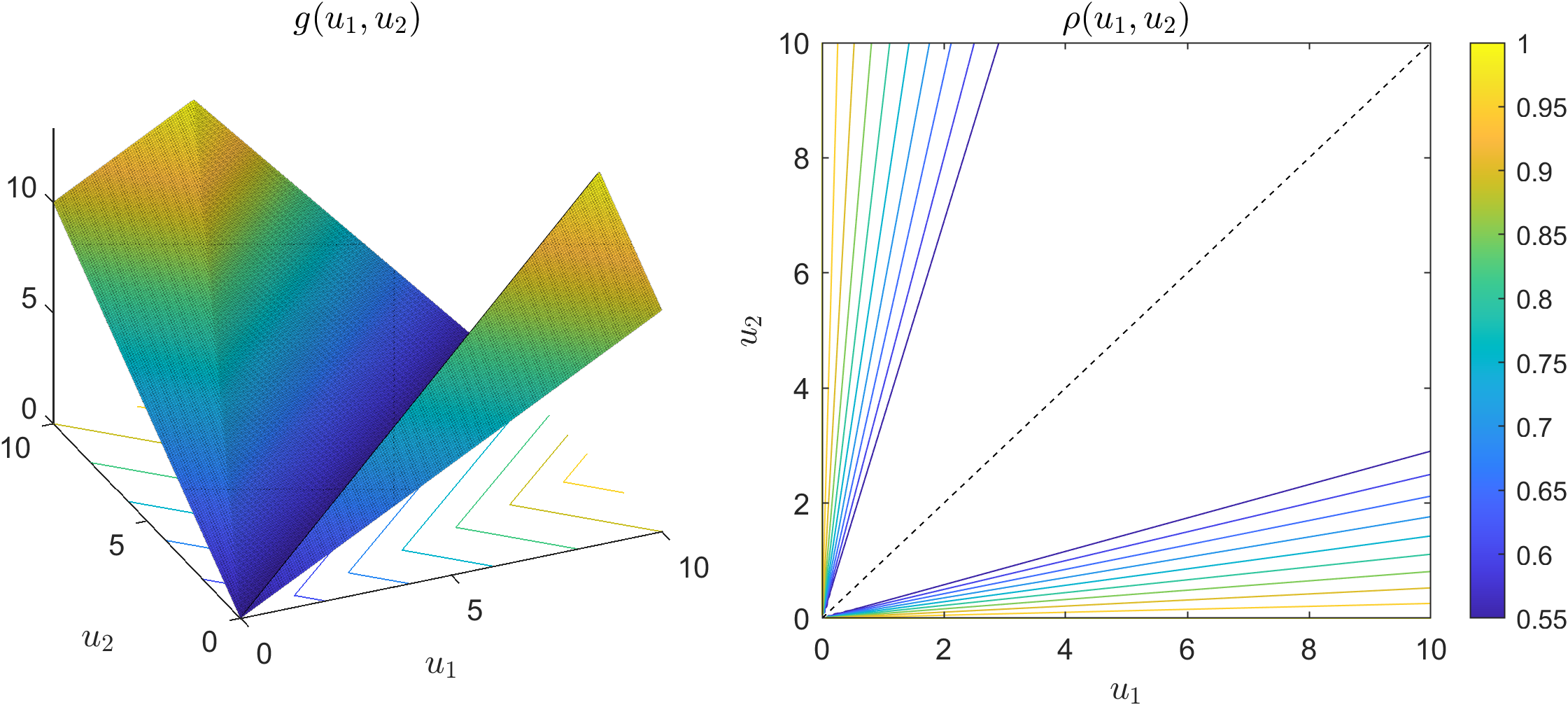}
    \caption{Example of the normalization function $g(u_1,u_2)=2\min\{|u_1-u_2|,(u_1+u_2)/2\}$ and the relative convex fairness measure $\rho(u_1,u_2)=|u_1-u_2|/g(u_1,u_2)$.}
    \label{fig:eg_Schur_convex_rel_measure}
\end{figure}
\end{example}

Next, in Theorem~\ref{thm:nceessary_sufficient_rel_CFM_mean}, we show that if we restrict the normalization function $g$ to be a function of the mean $\ubar$, then $g(\ub)$ can only take the following form $g(\ub)=CN\ubar$, i.e., a necessary condition on $g$ for $\rho$ to be a relative convex fairness measure satisfying Definition~\ref{def:relative_convex_fairness_measure}. 

\begin{theorem}\label{thm:nceessary_sufficient_rel_CFM_mean}
Let $\nu:\R^N_+\rightarrow\R$ be a convex fairness measure and $g:\R^N_+\rightarrow\R$ be a continuous function. Suppose that $g(\ub)=\gt(\ubar)$ is a function of mean $\ubar = N^{-1}\sum_{i=1}^N u_i$ of $\ub$. Then, the relative convex fairness measure $\rho=\nu/g$ satisfies Axioms~\ref{axiom:normalization_rel}, \ref{axiom:symmetry}, \ref{axiom:Schur_convex}, and \ref{axiom:scale_invariance} if and only if $\gt(\ubar)=CN\ubar$ for any $C\geq \sup_{\wb\in\calW_\nu}\norms{\wb_+}_\infty$.
\end{theorem}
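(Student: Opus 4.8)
The plan is to prove the two implications separately, leaning on the two preceding theorems. The sufficiency direction ($\Leftarrow$) is immediate: if $\gt(\ubar)=CN\ubar$ for some $C\geq\sup_{\wb\in\calW_\nu}\norms{\wb_+}_\infty$, then $g(\ub)=CN\ubar$ has exactly the form assumed in Theorem~\ref{thm:sufficient_rel_CFM}, which already guarantees that $\rho=\nu/g$ satisfies Axioms~\ref{axiom:normalization_rel}, \ref{axiom:symmetry}, \ref{axiom:Schur_convex}, and \ref{axiom:scale_invariance}. Nothing further is needed here, so all the work lies in the converse.

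For the necessity direction ($\Rightarrow$), I would first invoke Theorem~\ref{thm:necessary_rel_CFM}: since $\rho$ satisfies Axioms~\ref{axiom:normalization_rel}, \ref{axiom:symmetry}, and \ref{axiom:scale_invariance}, the normalization function $g$ must be symmetric and positive homogeneous with $g\geq\nu$ on $\R^N_+$ (note that Schur convexity is not even needed for pinning down the form of $g$; it comes for free once the form is established). I would then combine the hypothesis $g(\ub)=\gt(\ubar)$ with positive homogeneity. Because the mean of $\alpha\ub$ equals $\alpha\ubar$ for $\alpha>0$, positive homogeneity forces $\gt(\alpha\ubar)=\alpha\gt(\ubar)$; fixing a reference argument and letting the scale vary shows that $\gt(t)/t$ is constant on $(0,\infty)$, so $\gt(t)=ct$ for some constant $c$, with $\gt(0)=0$ following from continuity. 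Writing $c=CN$ yields $\gt(\ubar)=CN\ubar$, and $c\geq0$ is forced by $g\geq\nu\geq0$.

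The remaining, and genuinely quantitative, step is to identify the sharp constant implied by $g\geq\nu$. I would restrict the inequality $CN\ubar\geq\nu(\ub)$ to the slice $\{\ub\in\R^N_+\mid\one^\tp\ub=\gamma\}$, where it reads $C\gamma\geq\max_{\ub}\nu(\ub)$ since $N\ubar=\one^\tp\ub=\gamma$, and compute the worst case of $\nu$ over this slice. Using the dual representation $\nu(\ub)=\sup_{\wb\in\calW_\nu}\sum_{i=1}^N w_iu_{(i)}$ from Theorem~\ref{thm:convex_FM_dual}, the double supremum over $\ub$ and $\wb$ interchanges freely (this is simply Fubini for suprema and needs no minimax argument). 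For a fixed nonzero $\wb\in\calW_\nu\subseteq\calS^N$, whose entries are ascending with $\one^\tp\wb=0$ and hence $w_1<0<w_N$, a rearrangement argument shows that $\sum_{i=1}^N w_iu_{(i)}$ is maximized over the non-negative slice by placing all mass $\gamma$ in the top coordinate, giving $\gamma w_N=\gamma\norms{\wb_+}_\infty$. Taking the supremum over $\wb$ gives $\max_{\ub}\nu(\ub)=\gamma\sup_{\wb\in\calW_\nu}\norms{\wb_+}_\infty$, so $g\geq\nu$ holds if and only if $C\geq\sup_{\wb\in\calW_\nu}\norms{\wb_+}_\infty$. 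This is the stated bound, and it exactly recovers the normalization heuristic in~\eqref{eqn:rel_CFM_linear_normalization_intuition}.

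I expect the main obstacle to be the clean justification of this worst-case computation: checking that the concentration $\ub=(0,\dots,0,\gamma)^\tp$ is simultaneously optimal under the ordered weights $w_1\leq\dots\leq w_N$ and the constraints $u_{(i)}\geq0$, $\one^\tp\ub=\gamma$ (rearrangement together with non-negativity), and that $w_N=\norms{\wb_+}_\infty$ for every nonzero $\wb\in\calS^N$ because such $\wb$ necessarily has $w_N>0$. The structural step showing $\gt$ is linear is routine once positive homogeneity is available.
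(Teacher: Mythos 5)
Your proposal is correct and follows essentially the same route as the paper's proof: sufficiency is delegated to Theorem~\ref{thm:sufficient_rel_CFM}, and necessity combines Theorem~\ref{thm:necessary_rel_CFM} (symmetry, positive homogeneity, $g\geq\nu$), the linearity of a one-dimensional positive homogeneous function, and the worst-case value $\gamma\sup_{\wb\in\calW_\nu}\norms{\wb_+}_\infty$ of $\nu$ on the slice $\{\ub\in\R^N_+\mid\one^\tp\ub=\gamma\}$ to pin down the constant. The only cosmetic differences are that you prove the linearity of $\gt$ by a direct scaling argument where the paper cites Theorem~2.2.1 of \cite{Castillo_Ruiz-Cobo:1992}, and you spell out the rearrangement computation showing $(0,\dots,0,\gamma)^\tp$ maximizes $\nu$ on the slice, which the paper established in the proof of Theorem~\ref{thm:sufficient_rel_CFM} and simply recalls here.
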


Theorem \ref{thm:nceessary_sufficient_rel_CFM_mean} shows that if the normalization function $g$ is a function of the mean $\ubar$, then a linear function in $\ubar$ is the only possible choice such that $\rho$ satisfies Axioms~\ref{axiom:normalization_rel}, \ref{axiom:symmetry}, \ref{axiom:Schur_convex}, and \ref{axiom:scale_invariance}. We can also see from \eqref{eqn:rel_CFM_linear_normalization_intuition} that $g(\ub)=CN\ubar$ with $C=\sup_{\wb\in\calW_\nu}\norms{\wb_+}_\infty$ is an intuitive choice. 

Observe from \eqref{eqn:rel_CFM_linear_normalization_intuition}  that $\rho(\ub)=0$ when the convex fairness measure attains its minimum value of zero, i.e., $\nu(\ub)=\nu(\ub_{\text{min}})=0$, where $\ub_\text{min}\in\argmin\{\nu(\ub)\mid \one^\tp\ub=\gamma\}$ for any $\gamma>0$. Thus, like other relative fairness measures in the literature, $\rho(\ub)=0$ implies perfect equality (Axiom~\ref{axiom:normalization_rel}). On the other hand, $\rho(\ub)$ attains its maximum value of one when the convex fairness measure attains its maximum value, i.e., $\nu(\ub)=\nu(\ub_{\text{max}})$, where $\ub_\text{max}\in\argmax\{\nu(\ub)\mid \one^\tp\ub=\gamma\}$ for any $\gamma>0$. It follows that $\rho(\ub)=1$ indicates perfect inequality. We formalize the latter observation in Proposition~\ref{prop:perfect_inequality_rel_CFM}.

\begin{proposition}  \label{prop:perfect_inequality_rel_CFM}
Let $\nu$ be a convex fairness measure and $g(\ub)=\sup_{\wb\in\calW_\nu}\norms{\wb_+}_\infty N\ubar$ with $\ubar = N^{-1}\sum_{i=1}^N u_i$. Then, the relative convex fairness measure $\rho(\ub)=\nu(\ub)/g(\ub)=1$ if and only if $\ub\in\calU^\star=\bigcup_{\gamma>0} \calU^\star_\gamma$, where $\calU^\star_\gamma=\argmax_{\ub'\in\R^N_+}\{\nu(\ub')\mid \one^\tp\ub'=\gamma\}$.
\end{proposition}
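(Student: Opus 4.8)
The plan is to reduce the statement to a single maximization identity and then dispatch the claimed equivalence directly from the definition of $\rho$. Write $C:=\sup_{\wb\in\calW_\nu}\norms{\wb_+}_\infty$. First I would record that $C$ is finite and strictly positive: finiteness holds because $\calW_\nu$ is compact (Theorem~\ref{thm:convex_FM_dual}) and $\wb\mapsto\norms{\wb_+}_\infty$ is continuous, while positivity holds because $\calW_\nu\ne\{\zero\}$ contains a nonzero $\wb\in\calS^N$, which, being ascending with $\one^\tp\wb=0$, must satisfy $w_N>0$. Since $N\ubar=\one^\tp\ub$, the normalization function is simply $g(\ub)=C\,\one^\tp\ub$. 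Defining $M(\gamma):=\max_{\ub'\in\R^N_+,\,\one^\tp\ub'=\gamma}\nu(\ub')$ for $\gamma>0$, the crux of the proof is the identity $M(\gamma)=C\gamma$.

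To establish $M(\gamma)=C\gamma$ I would invoke the dual representation $\nu(\ub')=\sup_{\wb\in\calW_\nu}\sum_{i=1}^N w_i u'_{(i)}$ from Theorem~\ref{thm:convex_FM_dual}. Fix $\wb\in\calW_\nu\subseteq\Rup^N\cap\calS^N$. Because $\ub'\ge\zero$ forces $u'_{(i)}\ge0$ and $\wb$ ascending forces $w_i\le w_N$, one gets $\sum_{i=1}^N w_i u'_{(i)}\le w_N\sum_{i=1}^N u'_{(i)}=w_N\gamma$, with equality attained at $\ub'=\gamma\eb_N$; moreover $\norms{\wb_+}_\infty=\max\{w_N,0\}=w_N$ since the largest entry of a zero-sum ascending vector is nonnegative. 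Hence for each fixed $\wb$, $\max_{\ub'}\nu_{\wb}(\ub')=\gamma\norms{\wb_+}_\infty$. Since both the maximization over $\ub'$ (over a compact simplex) and the optimization over $\wb\in\calW_\nu$ are suprema, they commute with no minimax gap—their common value being the supremum over the product set—so
\[
M(\gamma)=\max_{\ub'}\,\sup_{\wb\in\calW_\nu}\nu_{\wb}(\ub')=\sup_{\wb\in\calW_\nu}\,\max_{\ub'}\nu_{\wb}(\ub')=\gamma\sup_{\wb\in\calW_\nu}\norms{\wb_+}_\infty=C\gamma,
\]
which is exactly the rigorous counterpart of the intuition sketched in \eqref{eqn:rel_CFM_linear_normalization_intuition}.

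Finally I would conclude the equivalence from $M(\gamma)=C\gamma$ together with $\rho(\ub)=\nu(\ub)/g(\ub)$, splitting on whether $\ub$ is the origin. If $\ub=\zero$, then $\one^\tp\ub=0$ gives $g(\ub)=0$ and $\nu(\ub)=0$ by Axiom~\ref{axiom:normalization}, so $\rho(\zero)=0/0=0\ne1$; and correspondingly $\zero\notin\calU^\star$, since every $\calU^\star_\gamma$ requires $\one^\tp\ub'=\gamma>0$. If $\ub\ne\zero$, set $\gamma:=\one^\tp\ub>0$, so $g(\ub)=C\gamma>0$ and $\rho(\ub)=\nu(\ub)/(C\gamma)$. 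Then $\rho(\ub)=1$ iff $\nu(\ub)=C\gamma=M(\gamma)$, which (by compactness of the simplex and continuity of $\nu$, so that the maximum is attained) holds iff $\ub\in\argmax_{\ub'\in\R^N_+,\,\one^\tp\ub'=\gamma}\nu(\ub')=\calU^\star_\gamma$. Because membership in $\calU^\star_{\gamma'}$ forces $\gamma'=\one^\tp\ub=\gamma$, this is equivalent to $\ub\in\calU^\star=\bigcup_{\gamma'>0}\calU^\star_{\gamma'}$, which finishes the proof.

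I expect the only genuinely delicate point to be the justification of $M(\gamma)=C\gamma$—namely the per-weight maximization bound $\sum_i w_iu'_{(i)}\le w_N\gamma$ and the interchange of the two suprema—along with the bookkeeping that $C>0$ so that division by $g$ is legitimate away from the origin; everything after that identity is a routine case split governed by the $0/0$ and $a/0$ conventions.
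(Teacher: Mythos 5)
Your proof is correct, but it takes a genuinely different route from the paper's, chiefly in the reverse direction. The paper proves the forward implication much as you do (comparing $\nu(\ub)\geq\nu(0,\dots,0,\gamma)=C\gamma=g(\ub)$, then invoking $\rho\leq 1$, i.e., Axiom~\ref{axiom:normalization_rel} as established in Theorem~\ref{thm:nceessary_sufficient_rel_CFM_mean}), but for the reverse implication it argues that $\{\rho=1\}$ is the set of maximizers of $\rho$ and then applies a fractional-programming equivalence for ratios of positively homogeneous functions (citing Stancu-Minasian) to identify $\argmax_{\ub'\in\R^N_+}\rho(\ub')$ with $\bigcup_{\gamma>0}\calU^\star_\gamma$. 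You instead isolate the single identity $M(\gamma)=C\gamma$, prove it rigorously from the dual representation of Theorem~\ref{thm:convex_FM_dual} (the per-weight bound $\sum_i w_iu'_{(i)}\leq w_N\gamma$ attained at $\gamma\eb_N$, plus the always-valid exchange of two suprema), and then obtain both directions by direct algebra at fixed $\gamma$, with an explicit treatment of $\ub=\zero$ via the $0/0$ convention. What your route buys: it is more self-contained --- it needs neither the fractional-programming citation nor Axiom~\ref{axiom:normalization_rel} as a prior input (indeed $\rho\leq 1$ falls out of $M(\gamma)=C\gamma$ for free), and it supplies a rigorous proof of the step the paper disposes of as ``easy to show'' inside the proof of Theorem~\ref{thm:sufficient_rel_CFM}. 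What the paper's route buys: given the surrounding development (Theorems~\ref{thm:sufficient_rel_CFM} and \ref{thm:nceessary_sufficient_rel_CFM_mean} already proved), its argument for this proposition is shorter and reuses established machinery rather than re-deriving the extremal value of $\nu$ on the simplex.
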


In Corollary~\ref{coro:perfect_inequity}, we  show that under some additional mild assumption on $\nu$, the relative convex fairness measure achieves its maximum $\rho(\ub)=1$ if and only if $\ub$ takes the form $P(0,\dots,0,\gamma)^\tp\in\R^N$ for any $\gamma>0$ and permutation matrix $P$. 

\begin{corollary} \label{coro:perfect_inequity}
Let $\nu$ be a convex fairness measure and $g(\ub)=\sup_{\wb\in\calW_\nu}\norms{\wb_+}_\infty N\ubar$ with $\ubar = N^{-1}\sum_{i=1}^N u_i$. Assume that $\nu(0,\dots,0,0,\gamma)>\nu(0,\dots,0,\gamma/2,\gamma/2)$ for any $\gamma>0$. Then, $\rho(\ub)=\nu(\ub)/g(\ub)=1$ if and only if $\ub=P(0,\dots,0,\gamma)^\tp$ for any $\gamma>0$ and permutation matrix $P$.
\end{corollary}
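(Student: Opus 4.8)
The plan is to reduce the statement to a characterization of the maximizers of $\nu$ over the scaled simplex and then invoke Proposition~\ref{prop:perfect_inequality_rel_CFM}. By that proposition, $\rho(\ub)=1$ if and only if $\ub\in\calU^\star=\bigcup_{\gamma>0}\calU^\star_\gamma$ with $\calU^\star_\gamma=\argmax_{\ub'\in\R^N_+}\{\nu(\ub')\mid\one^\tp\ub'=\gamma\}$. Since the set $\{P(0,\dots,0,\gamma)^\tp\mid P\text{ a permutation matrix}\}$ is exactly $\{\gamma\eb_j\}_{j\in[N]}$ and lies in the simplex $\Delta_\gamma:=\{\ub\in\R^N_+\mid\one^\tp\ub=\gamma\}$, it suffices to prove, for each fixed $\gamma>0$, that $\calU^\star_\gamma=\{\gamma\eb_j\}_{j\in[N]}$; that is, the maximizers of $\nu$ over $\Delta_\gamma$ are exactly the vertices of $\Delta_\gamma$.

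First I would establish the easy inclusion. Because $\nu$ is convex (Axiom~\ref{axiom:convexity}) and $\Delta_\gamma$ is a compact convex polytope whose vertices are $\{\gamma\eb_j\}_{j\in[N]}$, the maximum of $\nu$ over $\Delta_\gamma$ is attained at a vertex; by symmetry (Axiom~\ref{axiom:symmetry}) all vertices share a common value $M:=\nu(\gamma\eb_N)=\nu(0,\dots,0,\gamma)$, so every vertex is a maximizer and $\{\gamma\eb_j\}_{j\in[N]}\subseteq\calU^\star_\gamma$.

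The main work is the reverse inclusion: any $\ub\in\Delta_\gamma$ with at least two strictly positive entries satisfies $\nu(\ub)<M$. I would argue by induction on the number $k$ of positive entries of $\ub$. For the base case $k=2$, suppose the positive entries sit in coordinates $i\neq j$, so that $\ub=(\gamma-b)\eb_i+b\eb_j$ with $b\in(0,\gamma)$. Restricting $\nu$ to the edge joining $\gamma\eb_i$ and $\gamma\eb_j$ gives the one-dimensional convex function $\psi(t):=\nu\big((\gamma-t)\eb_i+t\eb_j\big)$ on $[0,\gamma]$, with $\psi(0)=\psi(\gamma)=M$. By symmetry $\psi(\gamma/2)=\nu(0,\dots,0,\gamma/2,\gamma/2)$, which is strictly below $M$ by the standing hypothesis of the corollary. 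Since a convex function on an interval that attains its maximum at an interior point must be constant, and $\psi(\gamma/2)<M$ rules out $\psi\equiv M$, we obtain $\psi(t)<M$ for every $t\in(0,\gamma)$, in particular $\nu(\ub)=\psi(b)<M$. For the inductive step $k\geq3$, I would pick two positive coordinates $i,j$ of $\ub$ and write $\ub=\lambda\ub^++(1-\lambda)\ub^-$, where $\ub^+$ (resp.\ $\ub^-$) is obtained from $\ub$ by moving all mass of coordinate $j$ into $i$ (resp.\ of $i$ into $j$) and $\lambda=u_i/(u_i+u_j)\in(0,1)$. Both $\ub^\pm$ lie in $\Delta_\gamma$ and have exactly $k-1\geq2$ positive entries, so the induction hypothesis gives $\nu(\ub^\pm)<M$; convexity then yields $\nu(\ub)\leq\lambda\nu(\ub^+)+(1-\lambda)\nu(\ub^-)<M$. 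This proves $\calU^\star_\gamma=\{\gamma\eb_j\}_{j\in[N]}$, and combining with Proposition~\ref{prop:perfect_inequality_rel_CFM} finishes the proof.

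I expect the delicate point to be the base case, which is exactly where the corollary's extra hypothesis $\nu(0,\dots,0,\gamma)>\nu(0,\dots,0,\gamma/2,\gamma/2)$ is needed: convexity and Schur convexity alone only give $\nu(\ub)\leq M$, and without this strict separation of the vertex from the balanced two-point split an entire edge of $\Delta_\gamma$ could consist of maximizers. The hypothesis eliminates the balanced split as a maximizer and, propagated through the convexity-on-a-line argument and the merging induction, forces every non-vertex point of $\Delta_\gamma$ to be strictly suboptimal.
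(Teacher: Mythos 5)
Your proof is correct, and while it starts from the same (essentially forced) reduction as the paper --- invoking Proposition~\ref{prop:perfect_inequality_rel_CFM} to turn the statement into a characterization of $\calU^\star_\gamma$ --- the way you show that non-vertex points of the simplex are strictly suboptimal is genuinely different. The paper works with the sorted vector and splits into two cases according to whether $u_{(N)}\leq\gamma/2$: in the first case it majorizes $\ub$ by $(0,\dots,0,\gamma/2,\gamma/2)$ and applies Schur convexity directly; in the second it merges all other positive mass into coordinate $N-1$ (again a majorization step), observes that the resulting two-point vector lies on the segment joining $(0,\dots,0,\gamma/2,\gamma/2)$ and $(0,\dots,0,\gamma)$, and applies convexity once along that segment together with the strict hypothesis. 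You instead avoid majorization and Schur convexity altogether: your pairwise-merging induction on the number of positive entries uses only convexity (each merge is an explicit convex-combination identity), and your base case uses the one-dimensional fact that a convex function on an interval attaining its maximum at an interior point is constant, with the hypothesis ruling out constancy via the midpoint $\psi(\gamma/2)<M$. What the paper's route buys is brevity --- one majorization step collapses the general case to at most two positive coordinates at once --- while your route buys elementarity and self-containment: it needs only Axioms~\ref{axiom:continuity}, \ref{axiom:symmetry}, and \ref{axiom:convexity} plus the hypothesis, never citing the majorization machinery, and your Bauer-type observation that the maximum is attained at a vertex makes the ``easy'' inclusion explicit, which the paper leaves implicit. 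Both arguments place the strict hypothesis at exactly the same pressure point: excluding the balanced two-point split as a maximizer, without which an entire edge of the simplex could consist of maximizers.
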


\begin{remark}
Note that the constant $w_\text{max}=\sup_{\wb\in\calW_\nu}\norms{\wb_+}_\infty$ can be computed directly without solving the supremum problem. Specifically, since $\nu(\ub)=\sup_{\wb\in\calW_\nu}\nu_{\wb}(\ub)$ (see Theorem~\ref{thm:convex_FM_dual}), we have $\nu(0,\dots,0,\gamma)=\gamma w_\text{max}$, which implies $w_\text{max} = \nu(0,\dots,0,\gamma)/\gamma$ for any $\gamma>0$.     
\end{remark}

We close this section with Example \ref{eg:rel_CFM}, where we derive the relative counterparts of the deviation-based fairness measures in Table~\ref{table:existing_FM} and make connections to existing relative fairness measures. 

\begin{example}[Relative counterparts of the fairness measures in Table~\ref{table:existing_FM}] \label{eg:rel_CFM}
\phantom{a}

\begin{enumerate}[topsep=1pt,itemsep=-5pt]
    \item [(i)] Recall that measure (i) is order-based with $\wb=(-1,0,\dots,0,1)$. Thus, we have $w_\text{max}=1$, and hence,
    $$\rho(\ub)=\frac{\nu(\ub)}{N\ubar}=\frac{u_{(N)}-u_{(1)}}{N\ubar},$$
    which is equivalent to the relative range \citep{Cowell:2011}. Note that (iii) is equivalent to (i) (see \ref{appdx:eg_inequity_measures}). Thus, the relative counterpart of (iii) takes the same form as (i).
    
    \item[(ii)] Recall that measure (ii) is order-based with $w_i=2(2i-N-1)$ for $i\in[N]$. Thus, we have $w_\text{max}=2(N-1)$, and hence,
    $$\rho(\ub)=\frac{\nu(\ub)}{2N(N-1)\ubar}=\frac{\sum_{i\in [N]}\sum_{j\in[N]} |u_i-u_j|}{2N(N-1)\ubar},$$
    which is equivalent to the Gini index \citep{Gini:1912}.
    
    \item[(iv)] For measure (iv), we have $w_\text{max}=\nu(0,\dots,0,\gamma)/\gamma=2[1-(1/N)]$, and hence,
    $$\rho(\ub)=\frac{\nu(\ub)}{2(N-1)\ubar}=\frac{\sum_{i\in [N]}|u_i-\ubar|}{2(N-1)\ubar},$$
    which is equivalent to the relative mean absolute deviation, also known as the Hoover index \citep{Hoover:1936}.

    \item[(v)] For measure (v), we have $w_\text{max}=\nu(0,\dots,0,\gamma)/\gamma=\sqrt{1-(1/N)}$, and hence,
    $$\rho(\ub)=\frac{\nu(\ub)}{\sqrt{N}\sqrt{N-1}\ubar}=\frac{\sqrt{(N-1)^{-1}\sum_{i\in[N]} (u_i-\ubar)^2}}{\sqrt{N}\ubar},$$
    which is equivalent to the relative standard deviation or the coefficient of variation \citep{Cowell:2011}.

    \item[(vi)] For measure (vi), we have $w_\text{max}=\nu(0,\dots,0,\gamma)/\gamma=1-(1/N)$, and hence,
    $$\rho(\ub)=\frac{\nu(\ub)}{(N-1)\ubar}=\frac{\max_{i\in [N]}|u_i-\ubar|}{(N-1)\ubar}.$$
    Note that measure (vii) is equal to $N$ times measure (vi) (see \ref{appdx:eg_inequity_measures}). Thus, the relative counterpart of (vii) takes the same form as (vi).

    \item[(viii)]  For measure (viii), we have $w_\text{max}=\nu(0,\dots,0,\gamma)/\gamma=N$, and hence,
    $$\rho(\ub)=\frac{\nu(\ub)}{N^2\ubar}=\frac{\sum_{i\in[N]} \max_{j\in[N]} |u_i-u_j|}{N^2\ubar}.$$
    
\end{enumerate}
\end{example}


\section{Mathematical Proofs} \label{appdx:proofs}

In this section, we present proofs of the theoretical results in the order of their appearance.

\subsection{Proof of Proposition \ref{prop:order_based_FM_is_FM}}

\begin{itemize}
    \item[(a)] \textit{Axiom~\ref{axiom:continuity}}. First, we claim that the sorting operator $S:\R^N\rightarrow\R^N$ that maps a vector $\ub$ to $\ub^\uparrow\in\Rup^N$ with entries in ascending order is continuous. Consider two vectors $\ub^1$ and $\ub^2$ with $\norms{\ub^1-\ub^2}_\infty =\varepsilon$, i.e., $|u_i^1-u_i^2|\leq\varepsilon$ for all $i\in[N]$. We claim that $|u^1_{(i)}-u^2_{(i)}|\leq \varepsilon$ for all $i\in[N]$. To show this, suppose, on the contrary, that $|u^1_{(i)}-u^2_{(i)}| > \varepsilon$ for some $i\in[N]$. Consider the following two cases. First, if $u^2_{(i)}<u^1_{(i)}-\varepsilon$, define $\calJ^-_i=\{\pi_2(1),\dots,\pi_2(i)\}$, where $\pi_2(k)$ is the index such that $\ub^2_{\pi_2(k)}=\ub^2_{(k)}$ for $k\in[N]$. For all $j\in\calJ^-_i$, we have
    \begin{equation}  \label{ref:pf_prop_order_based_FM_is_FM_1}
        u^1_j\leq u^2_j+\varepsilon\leq u^2_{(i)}+\varepsilon<u^1_{(i)},
    \end{equation}
    where the first inequality follows from $\varepsilon=\norms{\ub^1-\ub^2}_\infty$, and the second inequality follows from $j\in\calJ^-_i$. This contradicts that $u^1_{(i)}$ is the $i$th smallest entry in $\ub^1$. Similarly, if $u^2_{(i)} > u^1_{(i)}+\varepsilon$, define $\calJ^+_i=\{\pi_2(i),\dots,\pi_2(N)\}$. Then, following a similar argument in \eqref{ref:pf_prop_order_based_FM_is_FM_1}, for all $j\in\calJ^+_i$, we have
    $$u^1_j\geq u^2_j-\varepsilon\geq u^2_{(i)}-\varepsilon>u^1_{(i)},$$
    which leads to the contradiction that $u^1_{(i)}$ is the $i$th smallest entry in $\ub^1$. Therefore,
    $$  |\nu_{\wb}(\ub^1)-\nu_{\wb}(\ub^2)| = \Bigg|\sum_{i=1}^N w_i u^1_{(i)} -\sum_{i=1}^N w_i u^2_{(i)}\Bigg| \leq\sum_{i=1}^N |w_i|\, |u^1_{(i)} - u^2_{(i)}| \leq \norms{\wb}_1 \norms{\ub^1-\ub^2}_\infty. $$
    %
    This shows that $\nu_{\wb}$ is continuous.
    
    \item[(b)] \textit{Axiom~\ref{axiom:normalization}}. We first show that $\nu_{\wb}(\ub)\geq 0$ for any $\ub\in\R^N$. Letting $u_{(0)}=0$, we can write
    $$\nu_{\wb}(\ub)=\sum_{i=1}^N w_i u_{(i)}=\sum_{i=1}^N w_i \sum_{j=1}^i \big[ u_{(j)}-u_{(j-1)} \big]=\sum_{j=1}^N \Bigg(\sum_{i=j}^N w_i \Bigg) \big[ u_{(j)}-u_{(j-1)} \big].$$
    Since $\sum_{i=1}^N w_i = 0$, we have
    \begin{equation} \label{pf_eqn:order_based_FM_is_FM_1}
     \nu_{\wb}(\ub)= \sum_{j=2}^N \Bigg(\sum_{i=j}^N w_i \Bigg) \big[ u_{(j)}-u_{(j-1)} \big].
    \end{equation}
    Since $u_{(j)}-u_{(j-1)}\geq 0$, to show that $\nu_{\wb}(\ub)\geq0$, it suffices to show that $\sum_{i=j}^N w_i > 0$ for all $j\in[2,N]_\Z$. We show $\sum_{i=j}^N w_i > 0$ by induction. When $j=2$, since $\sum_{i=1}^N w_i = 0$ and $w_1< 0$, we have $\sum_{i=2}^N w_i = \sum_{i=1}^N w_i - w_1 > 0$. Next, suppose that $\sum_{i=j-1}^N w_i > 0$. If $w_j \geq 0$, then it is trivial that $\sum_{i=j}^N w_j > 0$ since $0\leq w_j\leq \dots\leq w_N$ and $w_N>0$. If $w_j \leq 0$, then $\sum_{i=j}^N w_i = \sum_{i=j-1}^N w_i - w_{j-1} > 0$ by induction hypothesis. This completes the induction step and shows that $\nu_{\wb}(\ub)\geq 0$. Finally, we show that $\nu_{\wb}(\ub)=0$ if and only if $\ub=\alpha\one$ for some $\alpha\in\R$. If $\ub=\alpha\one$, then it trivial that $\nu_{\wb}(\ub)=\alpha \sum_{i=1}^N w_i=0 $. If $\nu_{\wb}(\ub)=0$, by \eqref{pf_eqn:order_based_FM_is_FM_1} and $\sum_{i=j}^N w_i > 0$, we must have $u_{(j)}-u_{(j-1)}=0$ for all $j\in[2,N]_\Z$, which in turn implies $u_{(1)}=\dots=u_{(N)}$.
    
    \item[(c)] \textit{Axiom~\ref{axiom:symmetry}}. Symmetry follows directly from the definition of $\nu_{\wb}$ that depends only on the order of $\ub$.
    
    \item[(d)] \textit{Axiom~\ref{axiom:Schur_convex}}. Letting $w_0=0$, we can write
    \begin{align*}
        \nu_{\wb}(\ub)&=\sum_{i=1}^N w_i u_{(i)}=\sum_{i=1}^N \Bigg[\sum_{j=1}^i (w_j-w_{j-1})\Bigg] u_{(i)} = \sum_{j=1}^N (w_j-w_{j-1}) \Bigg[\sum_{i=j}^N u_{(i)}\Bigg] \\
        &= w_1  \Bigg[\sum_{i=1}^N u_{(i)}\Bigg] + \sum_{j=2}^N (w_j-w_{j-1}) \Bigg[\sum_{i=j}^N u_{(i)}\Bigg].
    \end{align*}
     By definition, $\ub^1 \preceq \ub^2$ implies $\sum_{i=1}^N u_i^1=\sum_{i=1}^N u^2_i$ and $\sum_{i=j}^N u^1_i \leq \sum_{i=j}^N u^2_i$ for all $j\in[N]$. Since $w_j-w_{j-1}\geq 0$, we have $\nu_{\wb}(\ub^1)\leq \nu_{\wb}(\ub^2)$.
     
    \item[(e)] \textit{Axiom~\ref{axiom:trans_invariance}}. It is straightforward to verify that
    $$\nu_{\wb}(\ub+\alpha\one)=\sum_{i=1}^N w_i \big[u_{(i)}+\alpha \big] = \sum_{i=1}^N w_i u_{(i)} + \alpha  \sum_{i=1}^N w_i = \nu_{\wb}(\ub). $$
    
    \item[(f)] \textit{Axiom~\ref{axiom:pos_homo}}. It is straightforward to verify that
    $$\nu_{\wb}(\alpha\ub)=\sum_{i=1}^N w_i \big[\alpha u_{(i)}\big] = \alpha \sum_{i=1}^N w_i u_{(i)} = \alpha \nu_{\wb}(\ub). $$

\end{itemize}
This completes the proof. 

\subsection{Proof of Theorem \ref{thm:order_based_FM_characterization_I}}
From the rearrangement inequality \citep{Marshall_et_al:2011}, for any $\pi\in\Pi$, we have 
$$\sum_{i=1}^N w_{\pi(i)} u_i \leq \sum_{i=1}^N w_{(i)} u_{(i)} = \sum_{i=1}^N w_i u_{(i)}= \nu_{\wb}(\ub),$$
where the first equality follows from $w_1\leq\dots\leq w_N$. 

\subsection{Proof of Theorem \ref{thm:axiom_order_based}}
Suppose that $\nu$ is an order-based fairness measure with weight $\wb$. Then, $\nu$ satisfies Axioms~\ref{axiom:normalization}, \ref{axiom:Schur_convex}, and \ref{axiom:pos_homo} by Proposition~\ref{prop:order_based_FM_is_FM}. In addition, for any $\ub\in\Rup^N$, $\varepsilon\in[0,u_{j+1}-u_j]$, and $j\in[N]$, since the order of the entries in $\ub+\varepsilon\eb_j$ are the same as that of $\ub$, we also have $\nu(\ub+\varepsilon\eb_j)=\nu(\ub)+\varepsilon w_j$ by definition of the order-based fairness measure. If follows that $\nu$ satisfies Axiom~\ref{axiom:order_based}.

Now, suppose that $\nu$ satisfies Axioms~\ref{axiom:normalization}, \ref{axiom:Schur_convex}, \ref{axiom:pos_homo}, and \ref{axiom:order_based}. Recall that Axiom~\ref{axiom:Schur_convex} implies Axiom~\ref{axiom:symmetry}, i.e., $\nu$ is symmetric. Thus, without loss of generality, we focus on the function $\nu$ in the space $\Rup^N$. We show that $\nu$ is an order-based fairness measure in two steps. First, we show that for any $\ub\in\Rup^N$,
\begin{equation}\label{eqn_pf:thm_axiom_order_based_1}
    \nu(\ub+\varepsilon\ubbar^j)=\nu(\ub)+\varepsilon\nu(\ubbar^j),
\end{equation}
for all $j\in[N]$ and $\varepsilon\geq 0$, where $\ubbar^j = \sum_{i=j}^N \eb_i$. To show \eqref{eqn_pf:thm_axiom_order_based_1}, applying Axiom \ref{axiom:order_based} iteratively, we obtain that for all $j\in[N]$ and $\ub\in\Rup^N$,
\allowdisplaybreaks
\begin{align}
    \nu(\ub + \varepsilon\ubbar^j)&=\nu\Big(\big[\ub + \varepsilon\ubbar^{j+1}\big]+\varepsilon\eb_j\Big) \nonumber \\
    &= \nu\Big(\ub + \varepsilon\ubbar^{j+1}\Big) + \varepsilon w_j \nonumber \\
    &= \nu\Big(\big[\ub + \varepsilon\ubbar^{j+2}\big]+\varepsilon\eb_{j+1}\Big) + \varepsilon w_j  \nonumber \\
    &= \nu\Big(\ub + \varepsilon\ubbar^{j+2}\Big) + \varepsilon (w_j + w_{j+1})  \nonumber\\
    &= \cdots  \nonumber\\
    &= \nu(\ub) + \varepsilon \sum_{i=j}^N w_j.\label{eqn_pf:thm_axiom_order_based_2}
\end{align}
In particular, if we set $\ub=\zero$ in \eqref{eqn_pf:thm_axiom_order_based_2}, this gives $\nu(\varepsilon \ubbar^j)=\varepsilon\sum_{i=j}^N w_j$ for all $j\in[N]$, which implies that $\nu(\ubbar^j)=\sum_{i=j}^N w_j$ for all $j\in[N]$ by Axiom~\ref{axiom:pos_homo}. Replacing $\sum_{i=j}^N w_j$ in \eqref{eqn_pf:thm_axiom_order_based_2} by $\nu(\varepsilon \ubbar^j)$, we obtain the desired equality in \eqref{eqn_pf:thm_axiom_order_based_1}.

Second, using \eqref{eqn_pf:thm_axiom_order_based_1}, we show that Axioms~\ref{axiom:normalization}, \ref{axiom:Schur_convex}, \ref{axiom:pos_homo}, and \ref{axiom:order_based} characterize order-based fairness measures. For any $\ub\in\Rup^N$, we can write $\ub$ as a linear combination of $\{\ubbar^j\}_{j=1}^N$:
$$\ub=u_1\cdot\ubbar^1 + (u_2-u_1)\cdot\ubbar^2+\cdots+(u_N-u_{N-1})\cdot\ubbar^N=u_1\cdot\ubbar^1 +\sum_{j=2}^N \Big[(u_j-u_{j-1})\cdot\ubbar^j\Big].$$
Applying \eqref{eqn_pf:thm_axiom_order_based_1} iteratively, we have $\nu(\ub)=\nu(u_1\cdot\ubbar^1)+\sum_{j=2}^N (u_j-u_{j-1})\nu(\ubbar^j)$. Note that $\nu(u_1\cdot\ubbar^1)=\nu(u_1\cdot\one)=0$ by Axiom~\ref{axiom:normalization}. Thus, we have $\nu(\ub)=\sum_{j=2}^N (u_j-u_{j-1})\nu(\ubbar^j)$. It follows that we can rewrite the function $\nu$ as $\nu(\ub)=\sum_{j=1}^N w'_ju_j$, where $w'_j=\nu(\ubbar^j)-\nu(\ubbar^{j+1})$ for $j\in[N-1]$ and $w'_N=\nu(\ubbar^N)$. Hence, to show that $\nu$ is an order-based fairness measure, it suffices to show that $\wb'=(w'_1,\dots,w'_N)^\tp\in\calW=\{\wb\in\R^N\mid \sum_{i=1}^N w_i = 0,\, w_1\leq\dots\leq w_N,\, w_1< 0,\, w_N> 0\}$.
\begin{enumerate}
    \item[(a)] First, we show that $\sum_{j=1}^N w'_j =0$. From the definition of $w_j'$, we have $\sum_{j=1}^N w'_j = \nu(\ubbar^N) + \sum_{j=1}^{N-1} [\nu(\ubbar^j)-\nu(\ubbar^{j+1})] = \nu(\ubbar^1)=\nu(\one)=0$ by Axiom~\ref{axiom:normalization}.
    \item[(b)] Second, we show that $w'_j\geq w'_{j-1}$ for all $j\in[2,N]$. Indeed, note that $(\ubbar^{j-1} +\ubbar^{j+1})/2 \preceq \ubbar^j$ since the vector $(\ubbar^{j-1} +\ubbar^{j+1})/2$ is obtained by transferring $1/2$ from the $j$th to the $(j-1)$th individual in $\ubbar^j$, where we let $\ubbar^{N+1}=\zero$. Axiom~\ref{axiom:Schur_convex} implies that $\nu\big((\ubbar^{j-1} +\ubbar^{j+1})/2\big) \leq \nu(\ubbar^j)$. Also, by \eqref{eqn_pf:thm_axiom_order_based_1}, we have $\nu\big((\ubbar^{j-1} +\ubbar^{j+1})/2\big)=\big[\nu(\ubbar^{j-1}) +\nu(\ubbar^{j+1})\big]/2$. Therefore, we obtain $\nu(\ubbar^{j-1})-\nu(\ubbar^j) \leq \nu(\ubbar^j)-\nu(\ubbar^{j+1})$, and thus, $w'_j\geq w'_{j-1}$.
    \item[(c)] Third, be definition, we have $w'_1=\nu(\ubbar^1)-\nu(\ubbar^2)=-\nu(\ubbar^2)<0$, where the inequality follows from Axiom~\ref{axiom:normalization}. Together with (a) and (b), we must have $w'_N>0$.
\end{enumerate}
Therefore, we have $\nu(\ub)=\sum_{j=1}^N w'_ju_j$ is an order-based fairness measure. Finally, note that we have $\nu(\ubbar^j)=\sum_{i=j}^N w_j$ for all $j\in[N]$ from the first step of the proof, which implies $\wb'=\wb$. This completes the proof. 

\subsection{Proof of Theorem \ref{thm:convex_FM_dual}}
It is easy to verify that (b) implies (c). We first prove that (c) implies (a). Suppose there exists a compact set $\calW_\nu\subseteq\{\wb\in\Rup^N\mid \one^\tp\wb=0\}$ such that $\nu(\ub)=\sup_{\wb\in\calW_\nu} \nu_{\wb}(\ub)$. We want to verify that $\nu(\ub)$ is an absolute convex fairness measure satisfying Axioms~\ref{axiom:continuity}, \ref{axiom:normalization}, \ref{axiom:symmetry}, \ref{axiom:trans_invariance}, \ref{axiom:pos_homo}, and \ref{axiom:convexity} (Definition~\ref{def:absolute_convex_fairness_measures}).

\begin{enumerate}
    \item [(I)] \textit{Axiom~\ref{axiom:continuity}}. From the proof of Proposition \ref{prop:order_based_FM_is_FM}, for any $\{\ub^1,\ub^2\}\subset\R^N$, we have $|\nu_{\wb}(\ub^1)-\nu_{\wb}(\ub^2)|\leq \norms{\wb}_1\norms{\ub^1-\ub^2}_\infty$. Hence,
    \begin{align*}
        |\nu(\ub^1)-\nu(\ub^2)| &= \bigg| \sup_{\wb\in\calW} \nu_{\wb}(\ub^1) - \sup_{\wb\in\calW} \nu_{\wb}(\ub^2) \bigg| \\
        &\leq \sup_{\wb\in\calW} |\nu_{\wb}(\ub^1)-\nu_{\wb}(\ub^2)| \leq \sup_{\wb\in\calW} \norms{\wb}_1\cdot \norms{\ub^1-\ub^2}_\infty,
    \end{align*}
    where $\sup_{\wb\in\calW} \norms{\wb}_1<\infty$ since $\calW$ is compact. Hence, $\nu$ is continuous in $\ub$.    
    
    \item [(II)] \textit{Axiom~\ref{axiom:normalization}}. Since $\nu_{\wb}(\ub)\geq 0$ for any $\wb\in\calW$ by Proposition \ref{prop:order_based_FM_is_FM}, we have $\nu(\ub)\geq 0$. Moreover, note that $\nu(\ub)=0$ is equivalent to $\nu_{\wb}(\ub)=0$ for all $\wb\in\calW$. By Proposition \ref{prop:order_based_FM_is_FM}, $\nu_{\wb}(\ub)=0$ if and only if $\ub=\alpha\one$ for some $\alpha\in\R$. 
    
    \item [(III)] \textit{Axiom~\ref{axiom:symmetry}}. Symmetry holds since each $\nu_{\wb}(\ub)$ is symmetric by Proposition \ref{prop:order_based_FM_is_FM}.

    \item [(IV)] \textit{Axiom~\ref{axiom:convexity}}. By Theorem \ref{thm:order_based_FM_characterization_I}, $\nu_{\wb}$ is a maximum of linear functions. It follows that $\nu_{\wb}$ is convex. Hence, since $\nu$ is a supremum of convex functions, $\nu$ is also convex.
        
    \item [(V)] \textit{Axiom~\ref{axiom:trans_invariance}}. By Proposition \ref{prop:order_based_FM_is_FM}, for any $\alpha\in\R$,
    $$\nu(\ub+\alpha\one) = \sup_{\wb\in\calW} \nu_{\wb}(\ub+\alpha\one) =  \sup_{\wb\in\calW} \nu_{\wb}(\ub) = \nu(\ub).$$
    
    \item [(VI)] \textit{Axiom~\ref{axiom:pos_homo}}. By Proposition \ref{prop:order_based_FM_is_FM}, for any $\alpha>0$,
    $$\nu(\alpha\ub) = \sup_{\wb\in\calW} \nu_{\wb}(\alpha\ub) =  \alpha \sup_{\wb\in\calW} \nu_{\wb}(\ub) = \alpha\nu(\ub).$$

\end{enumerate}

Next, we prove that (a) implies (b). Note that by definition of absolute convex fairness measures, $\nu$ is proper, continuous, and convex. By Fenchel–Moreau theorem, $\nu(\ub)=\sup_{\wb\in\R^N} \big\{\ub^\tp\wb - \nu^*(\wb)\big\}$, where $\nu^*(\wb)=\sup_{\ub\in\R^N} \{\ub^\tp\wb-\nu(\ub)\}$ is the convex conjugate of $\nu$ \citep{Bertsekas:2009}. We divide the proof into the following four steps.

\begin{itemize}
    \item \textit{Step 1.} By Axiom~\ref{axiom:pos_homo}, we have for any $\alpha > 0$,
    \begin{align*}
        \nu^*(\wb)&=\sup_{\ub\in\R^N}\big\{\ub^\tp\wb-\nu(\ub)\big\} \\
        &= \alpha \sup_{\ub\in\R^N} \Bigg\{ \bigg(\frac{\ub}{\alpha}\bigg)^\tp \wb - \nu\bigg(\frac{\ub}{\alpha}\bigg) \Bigg\} = \alpha \sup_{\ub'\in\R^N} \big\{ (\ub')^\tp \wb - \nu(\ub')\big\} = \alpha \nu^*(\wb),
    \end{align*}
    where we apply a change of variable $\ub'=\ub/\alpha$. Thus, $\nu^*(\wb)$ equals $0$ if $\wb\in\dom(\nu^*)$, and $\infty$ otherwise. Note that $\dom(\nu^*)= \{\wb\in\R^N\mid\nu^*(\wb)\leq 0\} = \big\{\wb\in\R^N\mid \ub^\tp\wb-\nu(\ub)\leq 0,\,\forall\ub\in\R^N\big\} = \partial \nu(\zero)$, where $\nu(\zero)$ is the subdifferential of $\nu$ at $\ub=\zero$. Therefore, the set $\dom(\nu^*)$ is closed, bounded, and convex (see Proposition 5.4.2 of \citealp{Bertsekas:2009}). Thus, we have     $\nu(\ub)=\sup_{\wb\in\dom(\nu^*)} \big\{\ub^\tp\wb\big\}$.
    
    \item \textit{Step 2.} By Axiom~\ref{axiom:trans_invariance}, we have for any $\ubt\in\R^N$,
    \begin{align*}
        \nu^*(\wb)=\sup_{\ub\in\R^N}\big\{\ub^\tp\wb-\nu(\ub)\big\} &\geq \sup_{\alpha\in\R} \big\{ (\ubt+\alpha\one)^\tp \wb - \nu(\ubt+\alpha\one)\big\} \\
        &= \sup_{\alpha\in\R} \big\{\ubt^\tp\wb - \nu(\ubt) + \alpha\one^\tp\wb\big\}.
    \end{align*}
    Therefore, if $\wb\in\dom(\nu^*)$, we must have $\one^\tp\wb=0$. 
    
    \item \textit{Step 3.} By Axiom~\ref{axiom:symmetry}, for any permutation matrix $P\in\calP$ (the set of all $N\times N$ permutation matrices),
    \begin{align*}
        \nu^*(P\wb)&=\sup_{\ub\in\R^N}\big\{ \ub^\tp(P\wb)-\nu(\ub)\big\} = \sup_{\ub\in\R^N}\big\{ (P^\tp\ub)^\tp\wb - \nu(\ub)\big\} \\
        &= \sup_{\ub'\in\R^N} \big\{(\ub')^\tp\wb-\nu(P\ub')\big\} = \sup_{\ub'\in\R^N}\big\{(\ub')^\tp\wb-\nu(\ub')\big\}=\nu^*(\wb),
    \end{align*}
    where we apply a change of variable $\ub'=P^\tp\ub$. Thus, $\nu^*$ is also symmetric. That is, if $\wb\in\dom(\nu^*)$, then $P\wb\in\dom(\nu^*)$ for any $P\in\calP$. As a result, $\dom(\nu^*)=\bigcup_{P\in\calP} \{\wbt\in\R^N\mid \wbt=P\wb,\, \wb\in\calW_\nu\}$ with $\calW_\nu=\dom(\nu^*)\cap\Rup^N$ still being compact and convex. Hence, we can write
    \begin{align*}
        \nu(\ub)=\sup_{\wb\in\dom(\nu^*)} \big\{\ub^\tp\wb\}=\sup_{\wb\in\calW_\nu} \sup_{P\in\calP} \big\{ \ub^\tp(P\wb)\big\} = \sup_{\wb\in\calW_\nu} \nu_{\wb}(\ub),
    \end{align*}
    where the last equality follows from Theorem \ref{thm:order_based_FM_characterization_I}. 
    
    \item \textit{Step 4.} We have shown that if $\nu$ satisfies Axioms~\ref{axiom:continuity}, \ref{axiom:symmetry}, \ref{axiom:convexity}, \ref{axiom:trans_invariance}, \ref{axiom:pos_homo}, then $\nu(\ub)=\sup_{\wb\in\calW_\nu} \nu_{\wb}(\ub)$. Therefore, we immediately have $\nu(\ub)\geq0$ since $\nu_{\wb}(\ub)\geq0$. Moreover, by Axiom~\ref{axiom:normalization}, since $\nu$ equals zero if and only if $\ub=\alpha\one$ for some $\alpha\in\R$. This implies that $\calW_\nu\ne\{\zero\}$ (otherwise, $\nu(\ub)=0$ for any $\ub\in\R^N$).
\end{itemize}

To conclude, if $\nu$ is an absolute convex fairness measure, then $\nu(\ub)=\sup_{\wb\in\calW_\nu} \nu_{\wb}(\ub)$, where $\calW_\nu=\dom(\nu^*)\cap\Rup^N\subseteq\{\wb\in\R^N\mid \one^\tp\wb=0\}$ and $\calW_\nu\ne\{\zero\}$. This completes the proof.  

\subsection{Proof of Proposition \ref{prop:covnex_FM_dual_set_ex_pt}}
Since $\calW_\nu$ is compact and convex, we have $\wb\in\conv(\calE(\calW_\nu))$. For any $\wb\in\conv(\calE(\calW_\nu))$, we have $\wb=\sum_{j=1}^{K} \alpha_j\wbt^j$ for some $K > 0$, $\wbt^j\in\calE(\calW_\nu)$, and $\alpha_j\geq 0$ with $\sum_{j=1}^K \alpha_j = 1$. Then,
$$\nu_{\wb}(\ub)=\sum_{i=1}^N w_i u_{(i)}=\sum_{i=1}^N \Bigg(\sum_{j=1}^K \alpha_j \wt^j_i\Bigg) u_{(i)}=\sum_{j=1}^K \alpha_j \Bigg(\sum_{i=1}^N \wt^j_i u_{(i)} \Bigg)=\sum_{j=1}^K \alpha_j \nu_{\wbt^j}(\ub).$$
That is, $\nu_{\wb}(\ub)$ is a convex combination of $\nu_{\wbt^j}(\ub)$ for $j\in[K]$. As a result, we have $\nu_{\wb}(\ub)\leq\max_{j\in[K]} \nu_{\wbt^j}(\ub)$. Hence, it suffices to consider the supremum over the set of extreme points $\calE(\calW_\nu)$. Finally, since $\nu_{\zero}\equiv 0$ is always dominated by $\nu_{\wb}$ for any $\wb\ne\zero$, we can consider the supremum over the set $\calE(\calW_\nu)\setminus\{\zero\}$. This completes the proof.

\subsection{Proof of Proposition \ref{prop:existing_FM_dual_set}}
\begin{itemize}
\item[(a)] From Proposition \ref{prop:equivalence_FM} (in \ref{appdx:eg_inequity_measures}), both (i) and (iii) are equivalent to $\max_{i\in[N]} u_i - \min_{i\in[N]} u_i = -u_{(1)} + u_{(N)}$. Therefore, a dual set $\calW_\nu$ is given by the singleton $\{(-1,0,\dots,0,1)\in\R^N\}$.

\item[(b)] From \cite{Mesa_et_al:2003}, we can write the Gini deviation (ii) as
$$\sum_{i=1}^N \sum_{j=1}^N |u_i-u_j| = \sum_{i=1}^N 2(2i-1-N) u_{(i)}=\sum_{i=1}^N w'_i u_{(i)},$$
where we let $w'_i=2(2i-1-N)$. Note that $w'_1=2(1-N)<0$, $w'_N=2(N-1)>0$ and $w'_i$ is increasing in $i$. Moreover,
$$\sum_{i=1}^N w'_i = 2\sum_{i=1}^N (2i-1-N) = 2[N(N+1) - N(N+1)] = 0.$$
Therefore, a dual set $\calW_\nu$ is given by the singleton $\{\wb'\}$.

\item[(c)] Consider $\norm{\ub-\ubar\one}_p$ for some $p\in[1,\infty]$. Let $q$ be such that $1/p+1/q=1$, where we let $q=\infty$ if $p=1$, and $q=1$ if $p=\infty$. Since $\ell_p$ norm is the dual of $\ell_q$ norm,
\begin{equation} \label{eqn:pf_existing_FM_dual_set_1}
    \norms{\ub-\ubar\one}_p = \sup_{\wb':\norms{\wb'}_q \leq 1} (\ub-\ubar\one)^\tp\wb' = \sup_{\wb':\norms{\wb'}_q \leq 1} \ub^\tp(\wb'-\wbar'\one),
\end{equation}
where $\wbar'=(1/N)\one^\tp\wb'$. Note that (iv)--(vi) can be written as $\norms{\ub-\ubar\one}_1$, $\norms{\ub-\ubar\one}_2$, and $\norms{\ub-\ubar\one}_\infty$ respectively.  Thus, from \eqref{eqn:pf_existing_FM_dual_set_1}, the desired dual set is
\begin{align*}
    \calW_\nu &=\calS^N\cap\Bigg\{\wb\in\R^N\,\,\Bigg|\,\, \wb=\wb'-\wbar'\one,\, \wbar'=\frac{1}{N}\sum_{j=1}^N w'_j,\,\norms{\wb'}_q \leq 1\Bigg\} \\
    &= \Bigg\{\wb\in\R^N\,\,\Bigg|\,\,\wb=\wb'-\wbar'\one,\, \wbar'=\frac{1}{N}\sum_{j=1}^N w'_j,\,\norms{\wb'}_q \leq 1,\, \wb'\in\Rup^N\Bigg\},
\end{align*}
where the last equality follows from the facts that $\one^\tp\wb=\one^\tp\wb'-\wbar'(\one^\tp\one)=0$, and $\wb\in\Rup^N$ if and only if $\wb'\in\Rup^N$.


\item[(d)] From Proposition \ref{prop:equivalence_FM} (in \ref{appdx:eg_inequity_measures}), (vii) is equivalent to $N\norms{\ub-\ubar\one}_\infty$. The same argument in \eqref{eqn:pf_existing_FM_dual_set_1} shows that 
$$N\norms{\ub-\ubar\one}_p = \sup_{\wb':\norms{\wb'}_q \leq 1} (\ub-\ubar\one)^\tp(N\wb') = \sup_{\wb':\norms{\wb'}_q \leq N} \ub^\tp(\wb'-\wbar'\one),$$
which gives the desired dual set $\calW_\nu$.

\item[(e)] Finally, for (viii), let $k=k(\ub)$ be the number of entries in $\ub$ that are closer to $u_{(1)}$, i.e., $u_{(i)}-u_{(1)} < u_{(N)}-u_{(i)}$ for $i\in[k]$. Then,
\allowdisplaybreaks
\begin{align*}
    \sum_{i=1}^N \max_{j\in[N]} |u_i-u_j|&=\sum_{i=1}^k \big[ u_{(N)} - u_{(i)} \big] + \sum_{i=k+1}^N \big[ u_{(i)} - u_{(1)} \big] \\
    &= \big[-(N-k)-1\big] u_{(1)} + \sum_{i=2}^k (-1) u_{(i)} + \sum_{i=k+1}^{N-1} u_{(i)} + (k+1) u_{(N)}.
\end{align*}
Note that $k$ takes value in $[N]$ only, it suffices to consider $\wb^k$ with entries $w^k_1 = -(N-k)-1$, $w^k_2=\dots=w^k_k=-1$, $w^k_{k+1}=\dots=w^k_{N-1}=1$, and $w^k_N=k+1$. Moreover, it is easy to verify that if there are $k$ entries in $\ub$ that are closer to $u_{(1)}$, then $\nu_{\wb^k}(\ub)\geq \nu_{\wb^h}(\ub)$ for $h \ne k$. Indeed, if $h<k$, then
\allowdisplaybreaks
\begin{align*}
   &\quad\,\,\nu_{\wb^k}(\ub)-\nu_{\wb^h}(\ub) \\
   &=\Bigg\{\sum_{i=1}^k \big[ u_{(N)} - u_{(i)} \big] + \sum_{i=k+1}^N \big[ u_{(i)} - u_{(1)} \big]\Bigg\} - \Bigg\{\sum_{i=1}^h \big[ u_{(N)} - u_{(i)} \big] + \sum_{i=h+1}^N \big[ u_{(i)} - u_{(1)} \big] \Bigg\} \\
   &= \sum_{i=h+1}^k \Big\{\big[ u_{(N)} - u_{(i)} \big] -  \big[ u_{(i)} - u_{(1)} \big]\Big\} > 0.
\end{align*}
Following a similar argument, if $h>k$, we also have $\nu_{\wb^k}(\ub)-\nu_{\wb^h}(\ub)>0$. Hence, a dual set $\calW_\nu$ is given by $\{\wb^k\}_{k=1}^N$. 
\end{itemize}
This completes the proof.

\subsection{Proof of Theorem \ref{thm:characterization_of_equivalence_CFM}}
First, if $\calW_1=\beta\calW_2$ for some $\beta>0$, then we have
$$\nu_1(\ub)=\sup_{\wb\in\calW_1} \nu_{\wb}(\ub) = \sup_{\wb\in\beta\calW_2} \nu_{\wb}(\ub)=\beta\sup_{\wb\in\calW_2} \nu_{\wb}(\ub)=\beta\nu_2(\ub),$$
implying the equivalence of $\nu_1$ and $\nu_2$. Next, if $\nu_1$ is equivalent to $\nu_2$, then $\nu_1(\ub)=\beta\nu_2(\ub)$ for some $\beta >0$. From the proof of Theorem \ref{thm:convex_FM_dual}, we have
\begin{equation} \label{eqn:pf_characterization_of_equivalence_CFM_1}
    \sup_{\wb\in\dom(\nu^*_1)} \ub^\tp\wb =\nu_1(\ub)=\beta\nu_2(\ub) = \beta \sup_{\wb\in\dom(\nu^*_2)} \ub^\tp\wb = \sup_{\wb\in\beta\dom(\nu^*_2)} \ub^\tp\wb.
\end{equation}
As a result,  \eqref{eqn:pf_characterization_of_equivalence_CFM_1} implies that the support functions of the convex compact sets $\dom(\nu^*_1)$ and $\beta\dom(\nu^*_2)$ are the same. Therefore, we have $\dom(\nu^*_1)=\beta\dom(\nu^*_2)$ by Theorem 13.2 of \cite{Rockafellar:1970}, and thus, $\calW_1=\dom(\nu^*_1)\cap\Rup^N=\beta\dom(\nu^*_2)\cap\Rup^N=\beta\calW_2$. 

\subsection{Proof of Theorem \ref{thm:FM_min_reformulation_order_based}}
From Theorem \ref{thm:order_based_FM_characterization_I}, we can write $\nu(\ub)=\max_{P\in\calP} \ub^\tp (P\wb)$, where $\calP$ is the set of all permutation matrices given by
\begin{equation} \label{eqn:permutation_matrix_set}
    \calP=\Bigg\{P\in\R^{N\times N} \,\Bigg|\, \sum_{i=1}^N P_{ij}=1,\, \sum_{j=1}^N P_{ij} = 1,\, P_{ij}\in\{0,1\},\,\forall i\in[N],\,j\in[N] \Bigg\}.
\end{equation}
Note that the objective $\ub^\tp (P\wb)$ is linear in $P$, and the constraint matrix formed by the assignment constraints in $\calP$ is totally unimodular \citep{Martello_Toth:1987}. Hence, $\max_{P\in\calP} \ub^\tp (P\wb)$ is a linear program in variables $P_{ij}$ and we can take its dual as
\begin{align}
\underset{P\in\calP}{\textup{max}} \,\,\ub^\tp (P\wb)=\underset{\lambdab\in\R^N,\, \thetab\in\R^N}{\textup{min}\,}  \Big\{
&  \one^\tp(\lambdab+\thetab) \,\Big|\,  \lambda_i + \theta_j \geq u_i w_j, \,\forall i\in[N],\, j\in[N] \Big\}. \label{eqn:order_based_FM_characterization_II}
\end{align}%
Combining \eqref{eqn:order_based_FM_characterization_II} with the outer minimization over $\xb$ and $\ub$ in problem \eqref{prob:FM_min}, we obtain the reformulation in \eqref{eqn:FM_min_reformulation_order_based}. 

\begin{remark}
We note that a similar reformulation technique also appears in studies that optimize the ordered-median function (see, e.g., \citealp{Blanco_et_al:2016}). However, as pointed out in Section~\ref{sec:order_based_fairness_measures}, our order-based fairness measure is different from the ordered-median function. In particular, the latter is a combination of the mean (an inefficiency measure) and an order-based fairness measure; see Section~\ref{sec:order_based_fairness_measures} for a thorough discussion on the differences.
\end{remark}

\subsection{Proof of Proposition \ref{prop:dual_variables_LB_UB}}
First, note that we can relax the set of permutation matrices $\calP$ in \eqref{eqn:permutation_matrix_set} as
\allowdisplaybreaks
\begin{equation*}
    \calP=\Bigg\{P\in\R^{N\times N} \,\Bigg|\, \sum_{i=1}^N P_{ij}=1,\, \sum_{j=1}^N P_{ij} \leq 1,\, P_{ij}\in\{0,1\},\,\forall i\in[N],\,j\in[N] \Bigg\}.
\end{equation*}
Indeed, the first constraint requires that every column of $P$ has exactly one entry with value $1$ and the second constraint requires that every row of $P$ has at most one entry with value $1$. Since $P$ is an $N$-by-$N$ matrix, this immediately ensures that $P$ has exactly one entry with value $1$ in every row and column. As a result, the dual variable $\lambda_i$ associated with the second constraint is non-negative. Next, for a given $\ub\in\R^N$, let $P^\star$ be an optimal solution to the primal problem $\max_{P\in\calP} \ub^\tp (P\wb)$ and $(\lambda^\star,\theta^\star)$ be an optimal dual solution. Let $\{(i,\pi(i))\}_{i\in[N]}$ be the set of pairs of indices such that $P^\star_{i,\pi(i)}=1$ (and is $0$ otherwise). Without loss of generality, we can assume that there exists $i'\in[N]$ such that $\lambda^\star_{i'}=0$. Indeed, if $\varepsilon:=\min_{i\in[N]}\lambda^\star_i >0$, then  $(\widetilde{\lambdab},\widetilde{\thetab})$ defined by $\widetilde{\lambda}_i = \lambda^\star_i - \varepsilon$ and $\widetilde{\theta}_i = \theta^\star_i + \varepsilon$ for all $i\in[N]$ is another optimal solution to \eqref{eqn:FM_min_reformulation_order_based}.

Now, we derive an upper bound on $\theta^\star_i$. By the complementary slackness condition, if $P^\star_{i,\pi(i)}=1$, then we have $\lambda^\star_i+\theta^\star_{\pi(i)} = u_i w_{\pi(i)}$ (i.e., with a zero slack variable), implying that $\theta^\star_{\pi(i)}=u_iw_{\pi(i)}-\lambda^\star_i$. Since $\lambda^\star_i\geq 0$, we immediately have $\theta^\star_{\pi(i)}\leq u_iw_{\pi(i)}\leq u_\text{max} w_{\pi(i)}$, where $u_\text{max}=\max_{i\in[N]} u_i$. Now, we derive an upper bound on $\lambda_i$. Using $\theta^\star_{\pi(i)}=u_iw_{\pi(i)}-\lambda^\star_i$ and letting $i=\pi^{-1}(j)$, constraints \eqref{eqn:FM_min_reformulation_direct_con1} implies that $\lambda^\star_i+ \big( u_{\pi^{-1}(j)} w_j - \lambda^\star_{\pi^{-1}(j)} \big) \geq u_i w_j$ for all $i\in[N]$ and $j\in[N]$, which is equivalent to $\lambda^\star_i+ \big( u_j w_{\pi(j)} - \lambda^\star_j \big) \geq u_i w_{\pi(j)}$ for all $i\in[N]$ and $j\in[N]$. Hence, we have $\lambda^\star_i - \lambda^\star _j  \geq (u_i -u_j) w_{\pi(j)}$ for all $i\in[N]$ and $j\in[N]$. Setting $i=i'$, since $\lambda^\star_{i'}=0$, the inequalities imply that $\lambda^\star_j\leq (u_j-u_{i'})w_{\pi(j)} \leq (u_\text{max}-u_\text{min}) \norms{\wb}_\infty$ for all $j\in[N]\setminus\{i'\}$, where $u_\text{min} = \min_{i\in[N]} u_i$. Finally, the lower bound of $\theta_i$ follows from the upper bound of $\lambda_i$ that $\theta^\star_{\pi(i)}=u_iw_{\pi(i)}-\lambda^\star_i \geq u_iw_{\pi(i)}- (u_\text{max}-u_\text{min}) \norms{\wb}_\infty$.

Note that the above lower and upper bounds on $\lambdab$ and $\thetab$ are obtained by fixing a vector $\ub\in\R^N$. Thus, the desired lower bound follows from taking the infimum over all feasible $\ub\in\calU := \{\ub\in\R^N\mid \ub=U(\xb),\,\xb\in\calX\}$ and the desired upper bounds follow from taking the supremum over all $\ub\in\calU$. Thus, we obtain the bounds on $\lambdab$
$$0\leq \lambda_i \leq \sup_{\ub\in\calU} \big\{u_\text{max}-u_\text{min}\big\}\cdot\norms{\wb}_\infty \leq (\Umax-\Umin)\norms{\wb}_\infty =: \lambdabar, $$
the lower bound on $\thetab$
$$ \theta_j \geq \inf_{\ub\in\calU} \Big\{u_{\pi^{-1}(j)} w_j - (u_\text{max}-u_\text{min}) \norms{\wb}_\infty\Big\} \geq \min\big\{\Umax w_j,\, \Umin w_j\big\} - \lambdabar, $$
and the upper bound on $\thetab$
$$ \theta_j \leq \sup_{\ub\in\calU} \big\{u_\text{max} w_j \big\} \leq \max\big\{\Umax w_j,\, \Umin w_j\big\}.$$
This completes the proof. 

\subsection{Proof of Proposition \ref{prop:FM_min_convex_FM_CCG_master}}

By the LP reformulation of $\nu_{\wb^k}(\ub)$ in \eqref{eqn:order_based_FM_characterization_II}, $\delta\geq\nu_{\wb^k}(\ub)$ if and only if there exist $\lambdab^k$ and $\thetab^k$ such that $\lambda^k_i+\theta^k_{i'}\geq u_iw^k_{i'}$ for all $i\in[N]$, $i'\in[N]$ and $\delta\geq \one^\tp(\lambdab^k+\thetab^k)$. Therefore, by introducing the variables $\lambdab^k$ and $\thetab^k$, we can reformulate \eqref{prob:C&CG_Master_original} into \eqref{prob:C&CG_Master}.

\subsection{Proof of Lemma \ref{lemma:CFM_dual_set_Hausdorff}}
First, note that
\begin{align*}
    \sup_{\wb_1\in\calW_1} \nu_{\wb_1}(\ub) - \sup_{\wb_2\in\calW_2} \nu_{\wb_2}(\ub) &= \sup_{\wb_1\in\calW_1} \inf_{\wb_2\in\calW_2} \Big\{\nu_{\wb_1}(\ub)-\nu_{\wb_2}(\ub) \Big\} \\
    &\leq \sup_{\wb_1\in\calW_1} \inf_{\wb_2\in\calW_2} \norms{\wb_1-\wb_2}\cdot\norms{\ub}_2 \leq d_H(\calW_1,\calW_2)\cdot\norms{\ub}_2,
\end{align*}
where the first inequality follows from Cauchy-Schwarz inequality, and the second inequality follows from the definition of $d_H$. Next, using the same argument, we have 
\begin{align*}
    \sup_{\wb_1\in\calW_1} \nu_{\wb_1}(\ub) - \sup_{\wb_2\in\calW_2} \nu_{\wb_2}(\ub) &= \inf_{\wb_2\in\calW_2} \sup_{\wb_1\in\calW_1} \Big\{\nu_{\wb_1}(\ub)-\nu_{\wb_2}(\ub) \Big\} \\
    &\geq -\sup_{\wb_2\in\calW_2} \inf_{\wb_1\in\calW_1} \norms{\wb_1-\wb_2}\cdot\norms{\ub}_2 \geq -d_H(\calW_1,\calW_2)\cdot\norms{\ub}_2.
\end{align*}
Hence, we obtain  $\big| \nu_1(\ub)-\nu_2(\ub) \big| \leq d_H(\calW_1,\calW_2) \cdot \norms{\ub}_2$. 

\subsection{Proof of Theorem \ref{thm:stability_analysis}}
First, we prove part (a). Note that
\allowdisplaybreaks
\begin{subequations}
    \begin{align}
    \upsilon^\star_2 - \upsilon^\star_1 &= \min_{\xb\in\calX} \Big\{f(U(\xb))+\gamma\nu_2(U(\xb))\Big\} - \min_{\xb\in\calX} \Big\{f(U(\xb))+\gamma\nu_1(U(\xb))\Big\}\nonumber \\
    &\leq \Big[f(U(\xb^\star_1)) + \gamma\sup_{\wb\in\calW_2} \nu_{\wb}(U(\xb^\star_1))\Big]-\Big[ f(U(\xb^\star_1)) + \gamma\sup_{\wb\in\calW_1} \nu_{\wb}(U(\xb^\star_1)) \Big] \label{eqn:pf_stability_optimal_sol_1}\\
    &\leq \gamma\Umax d_H(\calW_1,\calW_2), \label{eqn:pf_stability_optimal_sol_2}
    \end{align}
\end{subequations}
where \eqref{eqn:pf_stability_optimal_sol_1} follows from $\xb^\star_1\in\calX$ and the optimality of $\xb^\star_1$, and \eqref{eqn:pf_stability_optimal_sol_2} follows from Lemma \ref{lemma:CFM_dual_set_Hausdorff}. Using the same logic, it is easy to verify that $\upsilon^\star_1-\upsilon^\star_2 \leq \gamma\Umax d_H(\calW_1,\calW_2)$. This completes the proof showing that $\big|\upsilon^\star_1-\upsilon^\star_2\big| \leq \gamma\Umax d_H(\calW_1,\calW_2)$.

Next, we proceed to prove part (b). Note that
\begin{subequations}
\begin{align}
    \upsilon^\star_2 - \upsilon^\star_1 &= \min_{\xb\in\calX} \Big\{f(U(\xb))+\gamma\nu_2(U(\xb))\Big\} - \min_{\xb\in\calX} \Big\{f(U(\xb))+\gamma\nu_1(U(\xb))\Big\}\nonumber \\
    &= \Big[f(U(\xb^\star_2)) + \gamma\nu_{\wb^\star_2}(U(\xb^\star_2))\Big] -  \Big[f(U(\xb^\star_1)) + \gamma\nu_{\wb^\star_1}(U(\xb^\star_1))\Big] \nonumber\\
    &= \bigg\{\Big[f(U(\xb^\star_2)) + \gamma\nu_{\wb^\star_2}(U(\xb^\star_2))\Big] - \Big[f(U(\xb^\star_1)) + \gamma\nu_{\wb^\star_2}(U(\xb^\star_1))\Big]   \bigg\} \nonumber\\
    &\quad\quad+\bigg\{\Big[f(U(\xb^\star_1)) + \gamma\nu_{\wb^\star_2}(U(\xb^\star_1))\Big] - \Big[f(U(\xb^\star_1)) + \gamma\nu_{\wb^\star_1}(U(\xb^\star_1))\Big]   \bigg\} \nonumber \\
    &\leq -\tau_2 \norms{\xb^\star_2-\xb^\star_1}_2^2 + \gamma\bigg\{ \sup_{\wb\in\calW_2}\nu_{\wb}(U(\xb^\star_1)) - \sup_{\wb\in\calW_1}\nu_{\wb}(U(\xb^\star_1)) \bigg\} \label{eqn:pf_stability_optimal_sol_3}\\
    &\leq -\tau_2 \norms{\xb^\star_2-\xb^\star_1}_2^2 + \gamma\Umax d_H(\calW_1,\calW_2), \label{eqn:pf_stability_optimal_sol_4}
\end{align}
\end{subequations}
where \eqref{eqn:pf_stability_optimal_sol_3} follows from \eqref{eqn:local_quad_growth_cond}, $\wb^\star_2\in\calW_2$, and the optimality of $\wb^\star_1$, and \eqref{eqn:pf_stability_optimal_sol_4} follows from Lemma \ref{lemma:CFM_dual_set_Hausdorff}. Using the same logic, it is easy to verify that $\upsilon^\star_1-\upsilon^\star_2 \leq -\tau_1\norms{\xb^\star_1-\xb^\star_2}_2^2 + \gamma\Umax d_H(\calW_1,\calW_2)$. Thus, we have
$$0 \leq \big| \upsilon^\star_1 - \upsilon^\star_2 \big| \leq -\min\{\tau_1,\tau_2\} \norms{\xb^\star_1-\xb^\star_2}_2^2 + \gamma\Umax d_H(\calW_1,\calW_2),$$
which directly implies the desired inequality \eqref{eqn:statbility_bounds_on_optimal_solution}. 

\subsection{Proof of Theorem~\ref{thm:necessary_rel_CFM}}
Note that it is straightforward to verify the sufficient part using the definition of $\rho$. Thus, we focus on the necessary part. Assuming that $\rho=\nu/g$ satisfies Axioms~\ref{axiom:normalization_rel}, \ref{axiom:symmetry}, and \ref{axiom:scale_invariance}, we next show that $g$ is symmetric and positive homogeneous with $g \geq \nu$. For notational simplicity, we define $\calU_0=\{\ub\mid\ub=\alpha\one,\,\alpha\geq 0\}$ and its complement $\calU_0^c=\R^N_+\setminus\calU_0$. We divide the proof into the following three steps.

\begin{itemize}[topsep=2mm,itemsep=0mm]
    \item \textit{Step 1}. First, consider $\ub\in\calU_0^c$. By Axiom~\ref{axiom:normalization_rel}, we have $\rho(\ub)>0$. Also, by definition of absolute convex fairness measures (Axiom~\ref{axiom:normalization}), we have $\nu(\ub)>0$. Since $0<\rho(\ub)=\nu(\ub)/g(\ub)$, we have $g(\ub)>0$. Also, since $\rho(\ub)\leq 1$ by Axiom~\ref{axiom:normalization_rel}, it follows that $g(\ub)\geq \nu(\ub)\geq 0$ for any $\ub\in\calU_0^c$. Now, consider $\ub\in\calU_0$, i.e., $\ub=\ub_\alpha=\alpha\one$ for some $\alpha>0$. Consider the sequence $\{\ub_\alpha^k=[(\alpha+1)/k,\alpha,\dots,\alpha]^\tp\}_{k\in\N}\subset\R_+^N$ that converges to $\ub_\alpha$ as $k\rightarrow\infty$. Since $\ub_\alpha^k\in\calU_0^c$, we have $g(\ub_\alpha^k)\geq \nu(\ub_\alpha^k)$ for all $k\in\N$. Taking limit $k\rightarrow\infty$ on both sides of the inequality, we obtain $g(\ub_\alpha)\geq \nu(\ub_\alpha)$ by continuity of $g$ and $\nu$. This shows that $g\geq\nu$.

    \item \textit{Step 2}. Similar to step 1, consider $\ub\in\calU_0^c$. As shown in step 1, we have $\rho(\ub)>0$ and $g(\ub)>0$. Note that for any permutation matrix $P$, we have $g(P\ub)=\nu(P\ub)/\rho(P\ub)=\nu(\ub)/\rho(\ub)=g(\ub)$ since $\rho(P\ub)=\rho(\ub)$ by Axiom~\ref{axiom:symmetry} and $\nu(P\ub)=\nu(\ub)$ by symmetry of $\nu$. It follows that $g$ is symmetric. Finally, if $\ub\in\calU_0$, we can apply a similar limiting argument in step 1.

    \item \textit{Step 3}. Similar to step 1, consider $\ub\in\calU_0^c$. As shown in step 1, we have $\rho(\ub)>0$ and $g(\ub)>0$. Note that for any $\alpha>0$, we have $g(\alpha\ub)=\nu(\alpha\ub)/\rho(\alpha\ub)=\alpha\nu(\ub)/\rho(\ub)=\alpha g(\ub)$ since $\rho(\alpha\ub)=\rho(\ub)$ by Axiom~\ref{axiom:scale_invariance} and $\nu(\alpha\ub)=\alpha\nu(\ub)$ by positive homogeneity of $\nu$. It follows that $g$ is positive homogeneous. Finally, if $\ub\in\calU_0$, we can apply a similar limiting argument in step 1.

\end{itemize}

This completes the proof. 

\subsection{Proof of Theorem~\ref{thm:sufficient_rel_CFM}}
First, note that $g$ is symmetric and positive homogeneous. In addition, we claim that $g(\ub)\geq \nu(\ub)$ for all $\ub\in\R^N_+$. To prove this claim, consider the following optimization problem:
\begin{equation} \label{eqn_pf:thm_sufficient_rel_CFM_1}
    \max_{\ub\in\R^N_+} \Bigg\{ \nu(\ub)=\sup_{\wb\in\calW_\nu} \sum_{i=1}^N w_iu_{(i)} \,\Bigg|\, \one^\tp\ub=\gamma\Bigg\}
\end{equation}
for any $\gamma > 0$. It is easy to show that $\ub^\star_\gamma = (0,\dots,0,\gamma)^\tp$ is an optimal solution to \eqref{eqn_pf:thm_sufficient_rel_CFM_1} with objective value $\gamma \sup_{\wb\in\calW_\nu}\norms{\wb_+}_\infty$. Thus, for any $\gamma > 0$ and $\ub\in\R^N_+$ such that $\one^\tp\ub= \gamma$, we have
$$\nu(\ub)\leq \gamma \sup_{\wb\in\calW_\nu}\norms{\wb_+}_\infty=N\ubar \sup_{\wb\in\calW_\nu}\norms{\wb_+}_\infty\leq CN\ubar=g(\ub),$$
showing that $g(\ub)\geq \nu(\ub)$ for all $\ub\in\R^N_+$. Thus, by Theorem~\ref{thm:necessary_rel_CFM}, $\rho$ satisfies Axioms~\ref{axiom:normalization_rel}, \ref{axiom:symmetry}, and \ref{axiom:scale_invariance}.
    
Finally, we show that $g$ satisfies Axiom~\ref{axiom:Schur_convex}. Note that a function $h:\R_+^N\rightarrow\R$ is Schur convex if and only if $h$ is symmetric and $h$ satisfies the following condition: $h(\ub^\lambda_{12})\leq h(\ub)$ for any $\ub\in\R_+^N$ and $\lambda\in(0,1)$, where $\ub^\lambda_{12}=\lambda\ub + (1-\lambda)\ub_{12}$ and $\ub_{12}=(u_2,u_1,u_3,\dots,u_N)^\tp$ (see Theorem 2.4 of \citealp{Stepniak:2007}). Since $\rho$ is symmetric, it suffices to show that $\rho(\ub^\lambda_{12})\leq \rho(\ub)$ for any $\ub\in\R^N_+$. Indeed, since $\one^\tp\ub^\lambda_{12}=\one^\tp\ub=\one^\tp\ub_{12}$, letting $\mu=\one^\tp\ub/N$, we have
$$\rho(\ub^\lambda_{12})=\frac{\nu(\ub^\lambda_{12})}{CN\mu}\leq \frac{\lambda\nu(\ub)+(1-\lambda)\nu(\ub_{12})}{CN\mu}=\frac{\nu(\ub)}{CN\mu}=\frac{\nu(\ub)}{g(\ub)}=\rho(\ub),$$
where the inequality follows from the convexity of $\nu$, and the second equality follows from $\nu(\ub_{12})=\nu(\ub)$ (Axiom~\ref{axiom:symmetry}). Hence, $\rho$ is Schur convex.  

\subsection{Proof of Theorem~\ref{thm:nceessary_sufficient_rel_CFM_mean}}
Note that we have proved the sufficiency part in Theorem \ref{thm:sufficient_rel_CFM}, and hence, we only need to prove the necessity part. From Theorem \ref{thm:necessary_rel_CFM}, we know that $g$ is symmetric and positive homogeneous with $g\geq\nu$. This implies that $\gt:\R_+\rightarrow\R$ is also a (one-dimensional) positive homogeneous function. Theorem 2.2.1 of \cite{Castillo_Ruiz-Cobo:1992} shows that the only class of solutions of this homogeneous functional equation is of the form $\gt(\ubar)=c\ubar$ for some constant $c$. Finally, by Axiom~\ref{axiom:normalization_rel}, we have $\rho(\ub)\leq 1$, which implies $\nu(\ub)\leq c\ubar$. Recall, from the proof of Theorem~\ref{thm:sufficient_rel_CFM}, for any $\gamma>0$ and $\ub\in\R^N_+$ such that $\ub^\tp\one = \gamma$, we have $\nu(\ub)\leq \gamma \sup_{\wb\in\calW_\nu}\norms{\wb_+}_{\infty} =N\sup_{\wb\in\calW_\nu}\norms{\wb_+}_{\infty}\cdot\ubar$. Thus, to ensure that $\rho(\ub)\leq 1$ for all $\ub\in\R^N_+$, we must have $c\geq N\sup_{\wb\in\calW_\nu}\norms{\wb_+}_{\infty}$.

\subsection{Proof of Proposition~\ref{prop:perfect_inequality_rel_CFM}}
Assume that $\ub\in\calU^\star$. Let $\gamma=\one^\tp\ub$. Since $\ub\in\calU^\star$, we have $\ub\in\argmax\{\nu(\ub')\mid\one^\tp\ub'=\gamma\}$, which implies that
$$\nu(\ub) \geq \nu(0,\dots,0,\gamma)=\sup_{\wb\in\calW_\nu}\norms{\wb_+}_\infty \gamma = \sup_{\wb\in\calW_\nu}\norms{\wb_+}_\infty (\one^\tp\ub) = g(\ub) > 0. $$
Thus, it follows that $\rho(\ub)=\nu(\ub)/g(\ub)\geq 1$. This shows that $\rho(\ub)=1$.

Next, assume that $\rho(\ub)=1$. From Theorem~\ref{thm:nceessary_sufficient_rel_CFM_mean}, we know that $\rho(\ub)\leq 1$ (Axiom~\ref{axiom:normalization_rel}). This implies that the set of $\ub\in\R^N_+$ such that $\rho(\ub)=1$ is the set of the maximizers of $\rho(\ub)$, i.e., $\ub\in\argmax_{\ub'\in\R^N_+}\{\rho(\ub')\}$. Finally, note that
\begin{align}
\argmax_{\ub'\in\R^N_+}\Bigg\{\rho(\ub')=\frac{\nu(\ub')}{g(\ub')}\Bigg\}&=\argmax_{\ub'\in\R^N_+}\Bigg\{\frac{\nu(\ub')}{\sup_{\wb\in\calW_\nu}\norms{\wb_+}_\infty (\one^\tp\ub')}\Bigg\} \label{eqn_pf:them_perfect_inequality_rel_CFM_1}  \\
&=\bigcup_{\gamma>0}\argmax_{\ub'\in\R^N_+}\big\{\nu(\ub')\mid \one^\tp\ub'=\gamma\big\} \label{eqn_pf:them_perfect_inequality_rel_CFM_2} \\
&=\bigcup_{\gamma>0}\calU^\star_\gamma = \calU^\star. \nonumber
\end{align}
Here, note that the objective function in the maximization problem \eqref{eqn_pf:them_perfect_inequality_rel_CFM_1} is a fraction with both nominator and denominator being positive homogeneous. By classical fractional programming results, this is equivalent to maximizing the nominator with a fixed value of the denominator (see Chapter~4.3 of \citealp{Stancu-Minasian:1997} for details), which leads to \eqref{eqn_pf:them_perfect_inequality_rel_CFM_2}.  

\subsection{Proof of Corollary~\ref{coro:perfect_inequity}}
By Proposition~\ref{prop:perfect_inequality_rel_CFM}, it suffices to show that $\calU^\star_\gamma=\{P(0,\dots,0,\gamma)^\tp\mid P\in\calP\}$ for all $\gamma>0$. That is, $\ub=(0,\dots,0,\gamma)^\tp$ is the unique maximizer to the optimization problem $\max_{\ub'\in\R^N_+}\{\nu(\ub')\mid \one^\tp\ub'=\gamma,\,\ub'\in\Rup^N\}$ in which we seek to find the impact vector $\ub'$ that maximizes the value of the fairness measure $\nu$. Suppose, on the contrary, that there exists $\ub\in\Rup^N$ and $i\in[N-1]$ such that $\rho(\ub)=1$ with $u_j>0$ for all $j\in[i,N]_\Z$ and $u_j=0$ otherwise, i.e., $\ub$ does not take the form $(0,\dots,\gamma)^\tp$. Let $\gamma=\one^\tp\ub$. Define $\ub^{1}=(0,\dots,0,0,\gamma)^\tp$ and $\ub^{2}=(0,\dots,0,\gamma/2,\gamma/2)^\tp$. Consider the following two cases.
\begin{itemize}[topsep=2mm,itemsep=0mm]
    \item If $u_N \leq \gamma/2$, we must have $\ub\preceq\ub^{2}$. By Schur convexity of $\nu$ and our assumption that $\nu(\ub^{1})>\nu(\ub^{2})$, we have  $\nu(\ub)\leq\nu(\ub^{2})<\nu(\ub^{1})$. Since $\one^\tp\ub=\one^\tp\ub^{1}=\gamma$, we arrive at the contradiction that $1=\rho(\ub)<\rho(\ub^{1})=1$.

    \item If $u_N>\gamma/2$, we define the vector $\ubt\in\Rup^N$ by $\ut_j=0$ for $j\in[N-2]$, $\ut_{N-1}=\sum_{j=i}^{N-1} u_j$, and $\ut_N=u_N$. By construction, we have $\ub\preceq\ubt$ and $\ubt=\lambda\ub^{2}+(1-\lambda)\ub^{1}$, where $\lambda=2(1-u_N/\gamma)\in(0,1)$. Thus, by Schur convexity of $\nu$, convexity of $\nu$ and our assumption that $\nu(\ub^{1})>\nu(\ub^{2})$, we have $\nu(\ub)\leq \nu(\ubt)\leq \lambda \nu(\ub^{2}) + (1-\lambda)\nu(\ub^{1})<\nu(\ub^{1})$. Again, since $\one^\tp\ub=\one^\tp\ub^{1}=\gamma$, we arrive at the contradiction that $1=\rho(\ub)<\rho(\ub^{1})=1$.
\end{itemize}
Thus, this shows that $\rho(\ub)=1$ if and only if $\ub=P(0,\dots,0,\gamma)^\tp$ for any $\gamma>0$ and permutation matrix $P$.

\section{Solution Approaches with Convex Fairness Measures in Constraint Form} \label{appdx:sol_approach_CFM_constraint}

In this section, we propose solution approaches for optimization problem of the form \eqref{prob:FM_eff_min_constraint} with our proposed convex fairness measures.

\subsection{Absolute Convex Fairness Measure} \label{appdx:sol_approach_CFM_constraint_abs}

Using the dual representation of absolute convex fairness measures (see Section~\ref{sec:convex_fairness_measures}), we can reformulate \eqref{prob:FM_eff_min_constraint} into
\begin{equation} \label{eqn:FM_eff_min_constraint_general}
    \min_{\xb,\,\ub} \Big\{ f(\ub) \,\Big|\, \nu_{\wb}(\ub)\leq \eta,\,\forall \wb\in\calW_\nu,\, \ub = U(\xb),\, \xb\in\calX\Big\}.
\end{equation}
Note that decision-makers typically specify the upper bound $\eta$ on the absolute convex fairness measure in \eqref{eqn:FM_eff_min_constraint_general}. If they chose a very small $\eta$, then \eqref{eqn:FM_eff_min_constraint_general} could be infeasible.

Let us first consider the case when $\nu$ is an order-based fairness measure, i.e., $\calW_\nu=\{\wb\}$ is a singleton. Using the same proof techniques of Theorem~\ref{thm:FM_min_reformulation_order_based}, we can reformulate \eqref{eqn:FM_eff_min_constraint_general} as
\allowdisplaybreaks
\begin{subequations}  \label{eqn:FM_min_constraint_reformulation_order_based}
\begin{align}
\underset{\xb,\,\ub,\,\lambdab\in\R^N,\, \thetab\in\R^N}{\textup{minimize}\,} \quad
&  f(\ub)  \\   
\textup{subject to} \hspace{9mm}
& \one^\tp(\lambdab+\thetab) \leq \eta, \label{eqn:FM_min_constraint_reformulation_direct_con1} \\
&  \lambda_i + \theta_{i'} \geq u_i w_{i'}, \quad\forall i\in[N],\, i'\in[N],  \label{eqn:FM_min_constraint_reformulation_direct_con2}\\
& \ub = U(\xb),\, \xb\in\calX.
\end{align}%
\end{subequations}

Second, when $\nu$ is an absolute convex fairness measure, we propose a  C\&CG algorithm similar to Algorithm~\ref{algo:decomposition_method} to solve \eqref{eqn:FM_eff_min_constraint_general}. Algorithm~\ref{algo:decomposition_method_constraint} summarizes the steps of this algorithm. In this C\&CG, we solve a master problem and a subproblem at each iteration. Specifically, at iteration $j$, we solve the following master problem:
\begin{equation} \label{eqn:C&CG_constraint_master_original}
    \min_{\xb,\,\ub} \Big\{ f(\ub) \,\Big|\, \nu_{\wb}(\ub)\leq \eta,\,\forall \wb\in\{\wb^0,\dots,\wb^{j-1}\},\, \ub = U(\xb),\, \xb\in\calX\Big\},
\end{equation}
where $\{\wb^0,\dots,\wb^{j-1}\}\subseteq\calW_\nu$. Following the same techniques in the proof Proposition~\ref{prop:FM_min_convex_FM_CCG_master}, we derive the following equivalent solvable reformulation of \eqref{eqn:C&CG_constraint_master_original}:
\allowdisplaybreaks
\begin{subequations} \label{eqn:C&CG_constraint_master}
\begin{align}
\underset{\xb,\,\ub,\,\lambdab\in\R^N,\, \thetab\in\R^N}{\textup{minimize}\,} \quad
&  f(\ub)  \\   
\textup{subject to} \hspace{9mm}
& \one^\tp(\lambdab^k+\thetab^k) \leq \eta,\quad\forall k\in[0,j-1]_\Z, \\
&  \lambda^k_i + \theta^k_{i'} \geq u_i w^k_{i'}, \quad\forall i\in[N],\, i'\in[N],\, k\in[0,j-1]_\Z,  \\
& \ub = U(\xb),\, \xb\in\calX.
\end{align}%
\end{subequations} 
Since only a set of weight vectors in $\calW_\nu$ is considered, the master problem \eqref{eqn:C&CG_constraint_master} is a relaxation of the original problem \eqref{eqn:FM_eff_min_constraint_general}, i.e., the feasible region of \eqref{eqn:FM_eff_min_constraint_general} is a subset of the feasible region of \eqref{eqn:C&CG_constraint_master}. Thus, if \eqref{eqn:C&CG_constraint_master} is infeasible, then we can conclude that the original problem \eqref{eqn:FM_eff_min_constraint_general} is also infeasible. Otherwise, with the optimal solution $\ub^j$ from the master problem, we solve the same subproblem \eqref{prob:C&CG_Subproblem} and record the optimal solution $\wb^j\in\calW_\nu$ and optimal value $D^j$. Note that $D^j=\nu(\ub^j)$ is the value of the absolute fairness measure evaluated at $\ub^j$. Thus, if $D^j > \eta$, the current solution $(\xb^j,\ub^j)$ is infeasible to the original problem \eqref{eqn:FM_eff_min_constraint_general}, and thus we enlarge the set of weight vectors and proceed to the next iteration. Otherwise, we terminate and conclude that $(\xb^j,\ub^j)$ is an optimal solution. 
\IncMargin{0em}
\begin{algorithm}[t!] 
\SetKwInOut{Initialization}{Initialization}
\Initialization{Set $LB=0$, $UB=\infty$, $\varepsilon>0$, $j=1$, $\wb^0\in\calW_\nu$.}
\textbf{1. Master problem.} \\
\hspace{5.8mm}Solve master problem \eqref{eqn:C&CG_constraint_master} with weights $\{\wb^0,\dots,\wb^{j-1}\}$.\\
\hspace{5.8mm}If the master problem \eqref{eqn:C&CG_constraint_master} is infeasible, terminate and return the infeasibility of \eqref{eqn:FM_eff_min_constraint_general}.\\
\hspace{5.8mm}Otherwise, record the optimal solution $(\xb^j,\ub^j)$. \\
%
\textbf{2. Subproblem.} Solve subproblem \eqref{prob:C&CG_Subproblem} for fixed $\ub=\ub^j$.\\
\hspace{5.8mm}Record the optimal solution $\wb^j$ and value $D^j$. \\ 
\hspace{5.8mm}If $D^j\leq \eta$, terminate and return the optimal solution $(\xb^j,\ub^j)$. \\
\textbf{3. Scenario set enlargement.}\\
\hspace{5.8mm}Update $j \leftarrow j+1$ and go back to step 1.
\BlankLine
\caption{A column-and-constraint generation (C\&CG) method to solve \eqref{eqn:FM_eff_min_constraint_general}} \label{algo:decomposition_method_constraint}
\end{algorithm}\DecMargin{1em}

\subsection{Relative Convex Fairness Measure}  \label{appdx:sol_approach_CFM_constraint_rel}

Recall that, in Section~\ref{appdx:relative_convex_fairness_measures}, we define a relative convex fairness measure by $\rho(\ub)=\nu(\ub)/g(\ub)$ for some absolute convex fairness measure $\nu$ and continuous normalization function $g$. Thus, the corresponding equity-promoting optimization model is
\begin{equation} \label{eqn:FM_eff_min_constraint_general_rel}
    \min_{\xb,\,\ub} \Big\{ f(\ub) \,\Big|\, \rho(\ub)=\nu_{\wb}(\ub)/g(\ub)\leq \eta,\,\forall \wb\in\calW_\nu,\, \ub = U(\xb),\, \xb\in\calX\Big\}.
\end{equation}
for some $\eta>0$. From Theorem~\ref{thm:necessary_rel_CFM}, the normalization function $g$ satisfies $g\geq\nu\geq 0$. In particular, if $g(\ub)=0$, we must have $\nu(\ub)=0$. Therefore, we can reformulate \eqref{eqn:FM_eff_min_constraint_general_rel} into
\begin{equation} \label{eqn:FM_eff_min_constraint_general_rel_2}
    \min_{\xb,\,\ub} \Big\{ f(\ub) \,\Big|\, \nu_{\wb}(\ub)\leq \eta g(\ub),\,\forall \wb\in\calW_\nu,\, \ub = U(\xb),\, \xb\in\calX\Big\},
\end{equation}
which takes the same form of \eqref{eqn:FM_eff_min_constraint_general}. Thus, we can apply similar solution approaches as in \ref{appdx:sol_approach_CFM_constraint_abs} to solve \eqref{eqn:FM_eff_min_constraint_general_rel_2}.

\section{Additional Computational Results for the Fair Facility Location Problem} \label{appdx:add_comp_results}


In this section, we provide additional results comparing the computational performance of our proposed unified reformulations and solution methods and traditional techniques using a set of larger instances of the fair facility location problem introduced in Section~\ref{subsec:expt_facility_location}. Specifically, we follow the experimental settings discussed in Section~\ref{subsec:expt_facility_location} to generate five random instances of each of following combinations of problem parameters: (a) $(|I|,|J|)=(60,40)$ with $p\in\{13, 10, 8\}$, (b) $(|I|,|J|)=(80,20)$ with $p\in\{7, 5, 4\}$, (c) $(|I|,|J|)=(80,35)$ with $p\in\{7, 5, 4\}$. These instances reflect realistic scenarios where the number of customer locations is larger than that of potential facility locations, i.e., $|I| > |J|$.

Tables \ref{table:FLP_comp_time_MAD_add}--\ref{table:FLP_comp_time_Gini_deviation_add} present the minimum (min), average (avg), and maximum (max) solution time (in seconds) of the generated instances solved using our formulations and traditional ones for the problem with the MAD and Gini deviation, respectively. The observations are similar to those in Section~\ref{subsec:expt_facility_location}.  First, using our unified reformulation and the proposed  C\&CG, we can solve all the generated instances significantly faster than the classical reformulation techniques.   Second, solution time increases as $\gamma$ decreases, i.e., when more emphasis is placed on fairness. This suggests that the problem becomes more challenging when seeking fairer decisions. Notably, the solution times of traditional approaches are substantially longer than our solution times when $\gamma$ is smaller. Consider, for example, the largest instances with $(|I|,|J|)=(80,35)$. We can solve all the instances using our unified reformulations and C\&CG method within two hours. Specifically, the average solution using our approach ranges from $9$ minutes to $13$ minutes for the MAD with $\gamma=0.2$ and $12$ minutes to $62$ minutes for the Gini deviation and $\gamma=0.3$. In contrast, we are unable to solve a large number of these generated instances within two hours using traditional reformulations \eqref{eqn:FLP_direct_reformulation_MAD} and \eqref{eqn:FLP_direct_reformulation_Gini_deviation}.  For instances that were not solved by traditional formulations within two hours, the relative optimality gap reported by the solver at termination ranges from $2.4\%$ to $15.5\%$. Third, the gap in solution time between the two approaches generally widens as $p$ decreases, i.e., as the problem becomes more constrained. These results further underscore the computational efficiency of our approach compared with traditional methods.

\begin{table}[t]
\footnotesize
\center
\renewcommand{\arraystretch}{0.8}
\caption{Solution time (in seconds) over five randomly generated instances with $\gamma\in\{0.3,0.2\}$ using the absolute deviation from mean (MAD). \textit{Note:} Solution times with `$>$' indicate that one or more instances cannot be solved within the imposed two-hour time limit. } \label{table:FLP_comp_time_MAD_add} 
\begin{tabular}{clrrrrrrrrrrrrrr}
\Xhline{1.0pt}
$|I|=40$,   $|J|=60$ &  &  &  & \multicolumn{3}{c}{$\gamma = 0.3$} &  & \multicolumn{3}{c}{$\gamma = 0.2$} \\ \cline{5-11}
 &  &  &  & min & avg & max &  & min & avg & max \\ \cline{5-7} \cline{9-11}
$p = 13$ & \multicolumn{2}{l}{C\&CG Method} &  & 1& 7 & 11 &  & 2 & 567 & 2340 \\
 & \multicolumn{2}{l}{Traditional Reformulation \eqref{eqn:FLP_direct_reformulation_MAD}} &  & 1 & 40 & 63 &  & 6 & \textgreater{}5802 & \textgreater{}7200 \\ \Xhline{0.5pt}
 &  &  &  & min & avg & max &  & min & avg & max \\ \cline{5-7} \cline{9-11}
$p =    10$ & \multicolumn{2}{l}{C\&CG Method} &  & 4 & 24 & 50 &  & 86 & 867 & 3263 \\
 & \multicolumn{2}{l}{Traditional Reformulation \eqref{eqn:FLP_direct_reformulation_MAD}} &  & 6 & \textgreater{}1490 & \textgreater{}7200 &  & 405 & \textgreater{}4755 & \textgreater{}7200 \\ \Xhline{0.5pt}
 &  &  &  & min & avg & max &  & min & avg & max \\ \cline{5-7} \cline{9-11}
$p =    8$ & \multicolumn{2}{l}{C\&CG Method} &  & 4 & 18 & 35 &  & 15 & 853 & 3377 \\
 & \multicolumn{2}{l}{Traditional Reformulation \eqref{eqn:FLP_direct_reformulation_MAD}} &  & 4 & \textgreater{}1477 & \textgreater{}7200 &  & 153 & \textgreater{}4667 & \textgreater{}7200 \\ \Xhline{1.0pt}
$|I|=80$, $|J|=20$ &  &  &  & \multicolumn{3}{c}{$\gamma = 0.3$} &  & \multicolumn{3}{c}{$\gamma = 0.2$} \\
 &  &  &  & min & avg & max &  & min & avg & max \\ \cline{5-7} \cline{9-11}
$p = 7$ & \multicolumn{2}{l}{C\&CG Method} &  & 1 & 3 & 8 &  & 7 & 66 & 162 \\
 & \multicolumn{2}{l}{Traditional Reformulation \eqref{eqn:FLP_direct_reformulation_MAD}} &  & 1 & 5 & 15 &  & 4 & \textgreater{}1613 & \textgreater{}7200 \\ \Xhline{0.5pt}
 &  &  &  & min & avg & max &  & min & avg & max \\ \cline{5-7} \cline{9-11}
$p =    5$ & \multicolumn{2}{l}{C\&CG Method} &  & 1 & 2 & 5 &  & 16 & 30 & 50 \\
 & \multicolumn{2}{l}{Traditional Reformulation \eqref{eqn:FLP_direct_reformulation_MAD}} &  & 1 & 5 & 14 &  & 17 & 110 & 216 \\ \Xhline{0.5pt}
 &  &  &  & min & avg & max &  & min & avg & max \\ \cline{5-7} \cline{9-11}
$p =    4$ & \multicolumn{2}{l}{C\&CG Method} &  & 1 & 2 & 5 &  & 13 & 22 & 34 \\
 & \multicolumn{2}{l}{Traditional Reformulation \eqref{eqn:FLP_direct_reformulation_MAD}} &  & 2 & 4 & 12 &  & 15 & 114 & 251 \\ \Xhline{1.0pt}
$|I|=80$, $|J|=35$ &  &  &  & \multicolumn{3}{c}{$\gamma = 0.3$} &  & \multicolumn{3}{c}{$\gamma = 0.2$} \\  \cline{5-11}
 &  &  &  & min & avg & max &  & min & avg & max \\ \cline{5-7} \cline{9-11}
$p = 7$ & \multicolumn{2}{l}{C\&CG Method} &  & 2 & 9 & 30 &  & 28 & 756 & 2055 \\
 & \multicolumn{2}{l}{Traditional Reformulation \eqref{eqn:FLP_direct_reformulation_MAD}} &  & 1 & 14 & 44 &  & 15 & \textgreater{}3832 & \textgreater{}7200 \\ \Xhline{0.5pt}
 &  &  &  & min & avg & max &  & min & avg & max \\ \cline{5-7} \cline{9-11}
$p =    5$ & \multicolumn{2}{l}{C\&CG Method} &  & 1 & 5 & 14 &  & 14 & 551 & 1458 \\
 & \multicolumn{2}{l}{Traditional Reformulation \eqref{eqn:FLP_direct_reformulation_MAD}} &  & 1 & 45 & 144 &  & 218 & \textgreater{}3673 & \textgreater{}7200 \\ \Xhline{0.5pt}
 &  &  &  & min & avg & max &  & min & avg & max \\ \cline{5-7} \cline{9-11}
$p =    4$ & \multicolumn{2}{l}{C\&CG Method} &  & 1 & 9 & 25 &  & 15 & 654 & 2064 \\
 & \multicolumn{2}{l}{Traditional Reformulation \eqref{eqn:FLP_direct_reformulation_MAD}} &  & 1 & 11 & 36 &  & 182 & \textgreater{}3468 & \textgreater{}7200 \\             
\Xhline{1.0pt}
\end{tabular}
\end{table}

\begin{table}[t] 
\footnotesize
\center
\renewcommand{\arraystretch}{0.8}
\caption{Solution time (in seconds) over five randomly generated instances with $\gamma\in\{0.4,0.3\}$ using the Gini deviation. \textit{Note:} Solution times with `$>$' indicate that one or more instances cannot be solved within the imposed two-hour time limit.} \label{table:FLP_comp_time_Gini_deviation_add} 
\begin{tabular}{clrrrrrrrrrrrrrr}
\Xhline{1.0pt}
$|I|=40$,   $|J|=60$ &  &  &  & \multicolumn{3}{c}{$\gamma = 0.4$} &  & \multicolumn{3}{c}{$\gamma = 0.3$} \\ \cline{5-11}
 &  &  &  & min & avg & max &  & min & avg & max \\ \cline{5-7} \cline{9-11}
$p = 13$ & \multicolumn{2}{l}{Our Reformulation \eqref{eqn:FLP_dual_reformulation_Gini_deviation}} &  & 39 & 49 & 62 &  & 79 & 584 & 1461 \\
 & \multicolumn{2}{l}{Traditional Reformulation \eqref{eqn:FLP_direct_reformulation_Gini_deviation}} &  & 4 & 72 & 271 &  & 53 & 808 & 1804 \\ \Xhline{0.5pt}
 &  &  &  & min & avg & max &  & min & avg & max \\ \cline{5-7} \cline{9-11}
$p =    10$ & \multicolumn{2}{l}{Our Reformulation \eqref{eqn:FLP_dual_reformulation_Gini_deviation}} &  & 39 & 64 & 103 &  & 81 & 1569 & 4149 \\
 & \multicolumn{2}{l}{Traditional Reformulation \eqref{eqn:FLP_direct_reformulation_Gini_deviation}} &  & 14 & 137 & 269 &  & 292 & \multicolumn{1}{r}{\textgreater{}3020} & \multicolumn{1}{r}{\textgreater{}7200} \\ \Xhline{0.5pt}
 &  &  &  & min & avg & max &  & min & avg & max \\ \cline{5-7} \cline{9-11}
$p =    8$ & \multicolumn{2}{l}{Our Reformulation \eqref{eqn:FLP_dual_reformulation_Gini_deviation}} &  & 17 & 108 & 207 &  & 72 & 936 & 1829 \\
 & \multicolumn{2}{l}{Traditional Reformulation \eqref{eqn:FLP_direct_reformulation_Gini_deviation}} &  & 16 & 309 & 820 &  & 278 & \multicolumn{1}{r}{\textgreater{}4183} & \multicolumn{1}{r}{\textgreater{}7200} \\ \Xhline{1.0pt}
$|I|=80$, $|J|=20$ &  &  &  & \multicolumn{3}{c}{$\gamma = 0.4$} &  & \multicolumn{3}{c}{$\gamma = 0.3$} \\ \cline{5-11}
 &  &  &  & min & avg & max &  & min & avg & max \\ \cline{5-7} \cline{9-11}
$p = 7$ & \multicolumn{2}{l}{Our Reformulation \eqref{eqn:FLP_dual_reformulation_Gini_deviation}} &  & 59 & 130 & 269 &  & 795 & 1800 & 3557 \\
 & \multicolumn{2}{l}{Traditional Reformulation} &  & 14 & 189 & 464 &  & 784 & \multicolumn{1}{r}{\textgreater{}2860} & \multicolumn{1}{r}{\textgreater{}7200} \\ \Xhline{0.5pt}
 &  &  &  & min & avg & max &  & min & avg & max \\ \cline{5-7} \cline{9-11}
$p =    5$ & \multicolumn{2}{l}{Our Reformulation \eqref{eqn:FLP_dual_reformulation_Gini_deviation}} &  & 56 & 225 & 742 &  & 406 & 1259 & 1982 \\
 & \multicolumn{2}{l}{Traditional Reformulation \eqref{eqn:FLP_direct_reformulation_Gini_deviation}} &  & 38 & 162 & 559 &  & 1051 & 1858 & 2916 \\ \Xhline{0.5pt}
 &  &  &  & min & avg & max &  & min & avg & max \\ \cline{5-7} \cline{9-11}
$p =    4$ & \multicolumn{2}{l}{Our Reformulation \eqref{eqn:FLP_dual_reformulation_Gini_deviation}} &  & 52 & 87 & 134 &  & 127 & 336 & 745 \\
 & \multicolumn{2}{l}{Traditional Reformulation \eqref{eqn:FLP_direct_reformulation_Gini_deviation}} &  & 45 & 113 & 367 &  & 602 & 1076 & 2249 \\ \Xhline{1.0pt}
$|I|=80$, $|J|=35$ &  &  &  & \multicolumn{3}{c}{$\gamma = 0.4$} &  & \multicolumn{3}{c}{$\gamma = 0.3$} \\ \cline{5-11}
 &  &  &  & min & avg & max &  & min & avg & max \\ \cline{5-7} \cline{9-11}
$p = 7$ & \multicolumn{2}{l}{Our Reformulation \eqref{eqn:FLP_dual_reformulation_Gini_deviation}} &  & 78 & 353 & 1282 &  & 1988 & 3694 & 5918 \\
 & \multicolumn{2}{l}{Traditional Reformulation \eqref{eqn:FLP_direct_reformulation_Gini_deviation}} &  & 65 & 418 & 1790 &  & 5450 & \multicolumn{1}{r}{\textgreater{}6683} & \multicolumn{1}{r}{\textgreater{}7200} \\ \Xhline{0.5pt}
 &  &  &  & min & avg & max &  & min & avg & max \\ \cline{5-7} \cline{9-11}
$p =    5$ & \multicolumn{2}{l}{Our Reformulation  \eqref{eqn:FLP_dual_reformulation_Gini_deviation}} &  & 70 & 156 & 319 &  & 222 & 1863 & 5426 \\
 & \multicolumn{2}{l}{Traditional Reformulation \eqref{eqn:FLP_direct_reformulation_Gini_deviation}} &  & 44 & 648 & 1951 &  & 1107 & \multicolumn{1}{r}{\textgreater{}3970} & \multicolumn{1}{r}{\textgreater{}7200} \\ \Xhline{0.5pt}
 &  &  &  & min & avg & max &  & min & avg & max \\ \cline{5-7} \cline{9-11}
$p =    4$ & \multicolumn{2}{l}{Our Reformulation  \eqref{eqn:FLP_dual_reformulation_Gini_deviation}} &  & 70 & 126 & 217 &  & 143 & 740 & 1423 \\
 & \multicolumn{2}{l}{Traditional Reformulation} &  & 41 & 303 & 1106 &  & 490 & \multicolumn{1}{r}{\textgreater{}4558} & \multicolumn{1}{r}{\textgreater{}7200} \\  
\Xhline{1.0pt}
\end{tabular}
\end{table}

\section{Comparison with \texorpdfstring{\protect\cite{Lan_et_al:2010}}{ }} \label{appdx:comparison_with_Lan_et_al}

In this section, we compare our proposed axiomatic approach for the class of convex fairness measures (see Section~\ref{sec:convex_fairness_measures}) with \cite{Lan_et_al:2010}'s axiomatic approach for a class of fairness measures in resource allocation. Specifically, \cite{Lan_et_al:2010} considered resource allocation settings where one wants to quantify the degree of fairness associated with a given allocation vector $\ub\in\R_+^N$ ($u_i$ is the resource allocated to subject $i$). They construct a class of fairness measures  $\{h_N\}_{N\in\N}$ for this setting based on a set of five axioms and a generator function $g$, where $g$ must be an increasing and continuous function that leads to a well-defined mean function. To facilitate the discussion, we first summarize the axiomatic characterization of \cite{Lan_et_al:2010}'s class of fairness measures in Theorem~\ref{thm:Lan_et_al_rel_axiom} (we slightly modified \cite{Lan_et_al:2010}'s notation to align it with our notation for consistency).

\begin{theorem}[Theorems~1 and 2 of \cite{Lan_et_al:2010}] \label{thm:Lan_et_al_rel_axiom}
There exists a unique set of fairness measures $\{h_N:\R^N_+\rightarrow\R\}_{N\in\N}$ defined using a generator function $g$ such that $\{h_N\}_{N\in\N}$ satisfy the following axioms:
\begin{enumerate}[topsep=2mm,itemsep=0mm]
    \item Continuity: $h_N$ is continuous on $\R^N_+$;
    \item Homogeneity: $h_N(\alpha\ub)=h_N(\ub)$ for all $\alpha>0$ with $|h_1(u)|=1$ for all $u>0$;
    \item Asymptotic Saturation: $\lim_{N\rightarrow\infty} h_{N+1}(\one)/h_N(\one)=1$;
    \item Irrelevance of Partition: for any $\ub=(\ub^1,\ub^2)\in\R^N$ with $\ub^1\in\R^{N_1}$ and $\ub^2\in\R^{N_2}$, 
    $$h_N(\ub)=h_2(\one^\tp\ub^1,\,\one^\tp\ub^2)\cdot g^{-1} \Big( s_1\cdot g\big(h_{N_1}(\ub_1)\big) + s_2\cdot g\big(h_{N_2}(\ub_2)\big) \Big) $$
    for some $\{s_1,s_2\}\subset\R$ with $s_1+s_2=1$, and continuous and strictly monotonic function $g$ such that $m= g^{-1} \Big( s_1\cdot g\big(h_{N_1}(\ub_1)\big) + s_2\cdot g\big(h_{N_2}(\ub_2)\big) \Big) $ is a mean function of $\{h_{N_1}(\ub^1),h_{N_2}(\ub^2)\}$;
    \item Monotonicity: $h_2(u_1,u_2)$ is monotonically decreasing with $|u_1-u_2|$.
\end{enumerate}
\end{theorem}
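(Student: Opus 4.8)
The plan is to establish the two halves of the statement separately: \emph{existence} of a family $\{h_N\}$ built from a generator $g$, and \emph{uniqueness} of that family (these correspond to the two cited theorems of \cite{Lan_et_al:2010}). The conceptual linchpin in both directions is the Irrelevance of Partition axiom, which is a consistency (associativity-type) condition tying the $N$-subject measure $h_N$ to the two-subject measure $h_2$ through a quasi-arithmetic (Kolmogorov--Nagumo) mean with generator $g$. Recognizing that this axiom is precisely a bisymmetry requirement on a mean is what unlocks the whole argument.

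First I would handle existence by construction. Starting from a continuous, strictly monotonic generator $g$ and a bivariate base measure $h_2$ that already satisfies Homogeneity and Monotonicity, I would define $h_N$ for $N \geq 3$ recursively: split $[N]$ into two blocks, apply the partition formula with weights $s_1, s_2$ (with $s_1+s_2=1$, chosen consistently with the axiom, e.g.\ tied to the aggregate resource $\one^\tp\ub^k$ in each block), and iterate down to the base case $h_2$. The first obligation is well-definedness: the value must not depend on the order in which blocks are merged, which is exactly where the bisymmetry of the quasi-arithmetic mean $m = g^{-1}\big(s_1 g(\cdot)+s_2 g(\cdot)\big)$ is used. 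Once this is in place, Continuity of $h_N$ follows from continuity of $g$, $g^{-1}$, and $h_2$; Homogeneity propagates through the recursion because each $h_{N_k}$ is scale-invariant and the outer factor $h_2(\one^\tp\ub^1, \one^\tp\ub^2)$ is homogeneous of degree zero; and Asymptotic Saturation is checked by evaluating the recursion at $\ub = \one$ and controlling the ratio $h_{N+1}(\one)/h_N(\one) \to 1$.

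Next I would prove uniqueness. Suppose $\{h_N\}$ satisfies all five axioms. Homogeneity together with $|h_1(u)| = 1$ pins down $h_1$ immediately. Then, by induction on $N$ using the partition axiom, every $h_N$ is expressible through repeated application of $h_2$ and the generator $g$; hence $\{h_N\}$ is determined once $h_2$ and $g$ are. The residual freedom is the classical ambiguity of quasi-arithmetic mean generators: $g$ is determined only up to a positive affine transformation $g \mapsto ag+b$, which leaves the induced mean (and therefore every $h_N$) unchanged, so the family is genuinely unique. The Asymptotic Saturation axiom is what removes an otherwise-free normalization of the block weights, fixing the scaling as $N$ grows.

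The main obstacle I anticipate is the representation step: showing that Irrelevance of Partition \emph{forces} the quasi-arithmetic mean form, rather than merely being consistent with it. This is the technical core and requires invoking (and adapting to the domain $\R_+^N$) a Kolmogorov--Nagumo--de Finetti / Aczél-type theorem, which states that a continuous, strictly monotonic, reflexive, symmetric, and bisymmetric mean must be quasi-arithmetic. I would need to verify carefully that the partition formula, combined with Continuity and Monotonicity, supplies exactly these hypotheses for the merging operation---paying particular attention to the treatment of possibly unequal weights $s_1, s_2$, to reflexivity on the diagonal $\ub = \alpha\one$, and to boundary behavior at the edge of $\R_+^N$ where $g$ or $g^{-1}$ may degenerate.
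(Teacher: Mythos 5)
First, a contextual point: the paper does not prove this statement at all. It is quoted, with only notational adjustments, from Theorems~1 and~2 of Lan et al.\ (2010), and it appears in the appendix solely to set up a comparison between that axiomatic framework and the paper's class of convex fairness measures. There is therefore no proof in the paper to compare yours against; your attempt must be judged on its own terms, effectively as a reconstruction of Lan et al.'s argument.

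On those terms, your sketch has the right general shape (existence by construction from the generator, uniqueness via the partition axiom), but it has a genuine gap at the core of the uniqueness half. You argue that, by induction on $N$, Irrelevance of Partition expresses every $h_N$ through $h_2$ and $g$, ``hence $\{h_N\}$ is determined once $h_2$ and $g$ are.'' That reduces uniqueness of the family to uniqueness of $h_2$, which you never establish. The actual content of the uniqueness theorem is precisely that the axioms \emph{force} $h_2$ into a specific form: one applies the partition axiom to different partitions of the same vector, combines it with $|h_1|=1$, Continuity, Homogeneity, and Asymptotic Saturation to derive a functional equation for $h_2$ (equivalently, for the one-variable function of the ratio $u_1/(u_1+u_2)$ that Homogeneity produces), and then solves that equation. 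Without this step your induction only shows that the family is a function of $(h_2,g)$, which is not uniqueness. Relatedly, your appeal to the affine ambiguity $g \mapsto ag+b$ of quasi-arithmetic generators answers a different question; the theorem asserts uniqueness \emph{for a given} $g$, and distinct generators genuinely yield distinct families (e.g., different powers $\beta$ in Theorem~4 of Lan et al., also quoted in the paper), so no argument should attempt uniqueness across all $g$, and Asymptotic Saturation is not doing the job you assign to it. Finally, in the existence half, well-definedness of your recursive construction is harder than ``bisymmetry of the quasi-arithmetic mean'': the weights $s_1,s_2$ vary with the partition (they depend on the block aggregates $\one^\tp\ub^k$), so the merging operation is a weighted, partition-dependent mean, and the classical unweighted Kolmogorov--Nagumo/Acz\'el bisymmetry theorem does not apply off the shelf.
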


As discussed in \cite{Lan_et_al:2010}, from any function $g(\cdot)$ satisfying Axiom~4, Axioms 1--5 generate a unique set $\{h_N\}_{N\in\N}$. Such $h_N$ is a well-defined fairness measure if it also satisfies Axioms 1--5. Hence, to derive a closed-form mathematical expression of $h_N$ that can be used in optimization models, one has to choose a suitable generator function $g$ and specify the values of $\{s_1,s_2\}$ in Axiom 4 such that $h_N$ satisfies Axioms 1--5. \cite{Lan_et_al:2010} focused on the special case where $g(\cdot)$  is a power function and derived the mathematical expression of $h_N$ as shown in Theorem~\ref{thm:Lan_et_al_rel_axiom_power}.

\begin{theorem}[Theorem~4 of \cite{Lan_et_al:2010}] \label{thm:Lan_et_al_rel_axiom_power}
If the generation function $g$ is chosen as the power function $g(y)=|y|^\beta$ with parameter $\beta$ and coefficients $s_i=(\one^\tp\ub^i)^\rho/[(\one^\tp\ub^1)^\rho+(\one^\tp\ub^2)^\rho]$ for $\rho \ne 0$ and $i\in\{1,2\}$, then Axioms~1--5 define a unique family of fairness measures as follows:
\begin{equation*}
    h_N(\ub)=\sgn(1-\beta r) \Bigg[\sum_{i=1}^N \Bigg(\frac{u_i}{\sum_{j=1}^N u_j}\Bigg)^{1-\beta r}\Bigg]^{\frac{1}{\beta}},
\end{equation*}
where $r=(1-\rho)/\beta$ and $\sgn(\cdot)$ is the sign function, i.e., $\sgn(a)=1$ if $a>0$ and $\sgn(a)=-1$ if $a<0$. In particular, if $r$ is chosen to $1$, then
\begin{equation*}
    h_N(\ub)=\sgn(1-\beta) \Bigg[\sum_{i=1}^N \Bigg(\frac{u_i}{\sum_{j=1}^N u_j}\Bigg)^{1-\beta}\Bigg]^{\frac{1}{\beta}}.
\end{equation*}
\end{theorem}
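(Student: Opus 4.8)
The plan is to exploit the uniqueness already guaranteed by Theorem~\ref{thm:Lan_et_al_rel_axiom}: once the generator $g$ and the split coefficients $s_1,s_2$ are fixed, Axioms~1--5 pin down at most one family $\{h_N\}_{N\in\N}$. Hence it suffices to \emph{verify} that the proposed closed form satisfies all five axioms for $g(y)=|y|^\beta$ and $s_i=(\one^\tp\ub^i)^\rho/[(\one^\tp\ub^1)^\rho+(\one^\tp\ub^2)^\rho]$; existence then forces this to be the unique family. The single algebraic fact that drives the whole verification is the identity $1-\beta r=\rho$, which is merely a restatement of the definition $r=(1-\rho)/\beta$.

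First I would dispatch the four ``easy'' axioms. Writing $\Sigma=\sum_{j=1}^N u_j$, the proposed $h_N(\ub)=\sgn(1-\beta r)\,[\sum_i (u_i/\Sigma)^{1-\beta r}]^{1/\beta}$ depends on $\ub$ only through the normalized shares $u_i/\Sigma$, so it is continuous on the positive orthant and scale invariant, giving Continuity and Homogeneity; the normalization $h_1(u)=\sgn(1-\beta r)$ yields $|h_1|=1$. For Asymptotic Saturation a direct computation gives $h_N(\one)=\sgn(1-\beta r)\,N^{r}$, whence $h_{N+1}(\one)/h_N(\one)=(1+1/N)^{r}\to 1$. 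Monotonicity is the assertion that $h_2(u_1,u_2)$ is monotone in $|u_1-u_2|$, a one-variable calculus check on $t\mapsto [t^{1-\beta r}+(1-t)^{1-\beta r}]^{1/\beta}$ with $t=u_1/(u_1+u_2)$, the direction being fixed by the leading sign $\sgn(1-\beta r)$.

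The substantive step is Axiom~4 (Irrelevance of Partition). Splitting $\ub=(\ub^1,\ub^2)$ with subtotals $\Sigma_k=\one^\tp\ub^k$, I would first evaluate $g(h_{N_k}(\ub^k))=|h_{N_k}(\ub^k)|^\beta=\Sigma_k^{-(1-\beta r)}\sum_i (u_i^k)^{1-\beta r}$. The coefficients $s_k$ are engineered precisely so that the $\Sigma_k$-powers cancel: using $1-\beta r=\rho$ one obtains $s_k\,g(h_{N_k}(\ub^k))=\sum_i (u_i^k)^{\rho}/(\Sigma_1^\rho+\Sigma_2^\rho)$, so that $s_1 g(h_{N_1})+s_2 g(h_{N_2})=(\sum_{\text{all }i}u_i^{\rho})/(\Sigma_1^\rho+\Sigma_2^\rho)$ with $s_1+s_2=1$. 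Applying $g^{-1}$ and multiplying by $h_2(\Sigma_1,\Sigma_2)=\sgn(1-\beta r)\,\Sigma^{-\rho/\beta}(\Sigma_1^\rho+\Sigma_2^\rho)^{1/\beta}$, the factor $(\Sigma_1^\rho+\Sigma_2^\rho)^{1/\beta}$ cancels and one is left with exactly $\sgn(1-\beta r)\,[\sum_i (u_i/\Sigma)^{\rho}]^{1/\beta}=h_N(\ub)$, confirming the recursion. The specialization $r=1$ (equivalently $\rho=1-\beta$) is then immediate by substitution.

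I expect the main obstacle to be the sign bookkeeping rather than the algebra. Since $g(y)=|y|^\beta$ is even, it is not injective on $\R$, so $g^{-1}$ must be read as the inverse of $g$ restricted to the sign-branch matching the range of $h_N$, namely $\sgn(1-\beta r)$; one must check this branch choice is self-consistent through the recursion and, crucially, that the induced quantity $m=g^{-1}(s_1 g(h_{N_1})+s_2 g(h_{N_2}))$ is a genuine mean of $\{h_{N_1}(\ub^1),h_{N_2}(\ub^2)\}$, as the hypothesis demands. This forces a case split between $1-\beta r>0$ and $1-\beta r<0$ (equivalently $\rho>0$ and $\rho<0$) and constrains the admissible range of $\beta$ so that $g$ is strictly monotone on the relevant half-line; tracking these constraints carefully, rather than the power-law identities themselves, is where the real care is needed.
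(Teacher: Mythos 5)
First, a point of context that matters for this comparison: the paper does not prove this statement at all. It is reproduced, with attribution, as Theorem~4 of \cite{Lan_et_al:2010}, and it appears in \ref{appdx:comparison_with_Lan_et_al} solely to support the discussion contrasting that paper's axioms with the axioms defining convex fairness measures. So there is no in-paper proof to measure you against; the benchmark is the original derivation in \cite{Lan_et_al:2010}, which obtains the closed form constructively from the axioms, whereas you verify the closed form and then invoke the uniqueness supplied by Theorem~\ref{thm:Lan_et_al_rel_axiom}. That strategy is legitimate, and your central algebra is correct: with $1-\beta r=\rho$ one gets $g(h_{N_k}(\ub^k))=\Sigma_k^{-\rho}\sum_i (u_i^k)^\rho$ for $\Sigma_k=\one^\tp\ub^k$, the coefficients $s_k=\Sigma_k^\rho/(\Sigma_1^\rho+\Sigma_2^\rho)$ cancel the $\Sigma_k$ powers, and the factor $(\Sigma_1^\rho+\Sigma_2^\rho)^{1/\beta}$ cancels against $h_2(\Sigma_1,\Sigma_2)$, closing the recursion; the checks of Continuity, Homogeneity, and Asymptotic Saturation (via $h_N(\one)=\sgn(1-\beta r)\,N^r$) are also right.

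The genuine flaw is in the sign bookkeeping you flag, and specifically in your proposed resolution. You suggest reading $g^{-1}$ on the branch of sign $\sgn(1-\beta r)$, ``matching the range of $h_N$.'' That choice breaks the recursion exactly in the regime it is meant to handle: when $\sgn(1-\beta r)=-1$, both $h_2(\Sigma_1,\Sigma_2)$ and the $h_{N_k}(\ub^k)$ are negative, and the identity you need is $h_N(\ub)/h_2(\Sigma_1,\Sigma_2)=\bigl[s_1|h_{N_1}(\ub^1)|^\beta+s_2|h_{N_2}(\ub^2)|^\beta\bigr]^{1/\beta}>0$, so $g^{-1}$ must be the \emph{positive} branch; taking the negative branch turns $h_2\cdot g^{-1}(\cdot)$ into $+|h_N(\ub)|=-h_N(\ub)$. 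The price of the correct choice is that $m=g^{-1}\bigl(s_1 g(h_{N_1})+s_2 g(h_{N_2})\bigr)$ is then a weighted power mean of the magnitudes $|h_{N_k}|$, not of the signed values, so the ``mean function'' clause of Axiom~4 as quoted cannot be read literally as $\min_k h_{N_k}\le m\le \max_k h_{N_k}$ when the measures are negative --- your suspicion that this clause is problematic is well founded, but the resolution runs opposite to the one you propose. Relatedly, your Monotonicity check is not merely ``care'': Axiom~5 genuinely fails for some admissible-looking parameters (e.g.\ $\rho\in(0,1)$ with $\beta<0$, i.e.\ $r<0$, where $h_2$ is \emph{increasing} in $|u_1-u_2|$), so a complete verification-based proof must state and carry the parameter restrictions that the quoted theorem suppresses; without them, the assertion that the displayed formula satisfies Axioms~1--5 is simply false for part of the stated parameter range.
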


We are now ready to discuss the differences between our work and that of \cite{Lan_et_al:2010}. First, we emphasize that our proposed unified framework for convex fairness measures and \cite{Lan_et_al:2010}'s axiomatic approach to fairness measures consider different classes of fairness measures and different settings, and thus, there is no direct relationship between the two. As mentioned earlier, \cite{Lan_et_al:2010}'s axiomatic approach is based on the specific context of network resource allocation. Specifically, the outcome vector  $\ub\in\R_+^N$ in \cite{Lan_et_al:2010} can be interpreted as the non-negative resource allocation decision. In contrast, we propose a unified framework for a class of convex fairness measures suitable for various optimization contexts, and thus, it applies to a broader range of applications. Moreover, while \cite{Lan_et_al:2010}'s approach is limited to settings where the outcome vector is non-negative, our unified framework for convex fairness measures is applicable to general settings where the entries of the outcome vector can be negative. 

Second, \cite{Lan_et_al:2010} considers fairness in a relative sense, i.e., their class of fairness measures consists of relative measures (more on this below). In contrast, we consider convex fairness measures in an absolute sense and also introduce their relative counterpart. Third, we observe the following differences in the mathematical properties and interpretations between our convex fairness measure $\nu$ and the  fairness measure $h_N$:
\begin{itemize}[topsep=2mm,itemsep=2mm]
    \item The fairness measure $h_N$ may assume negative value depending on the choice of the generator function (see Theorem~\ref{thm:Lan_et_al_rel_axiom_power}).  In contrast, the convex fairness measure $\nu$ satisfies Axiom~\ref{axiom:normalization}, and thus, is always non-negative. As discussed in Section 3, Axiom~\ref{axiom:normalization} has the following intuitive interpretation: $\nu$ equals zero implies perfect fairness or equality.

    \item A larger value of $h_N$ corresponds to a fairer distribution of outcomes. In contrast, a smaller value of $\nu$ means a fairer distribution of outcomes. In particular, while the equal distribution $\ub=\one$ maximizes $h_N$, i.e., $h_N(\one)=\max_{\ub\in\R^N_+}h_N(\ub)$ (see Corollary~2 in \citealp{Lan_et_al:2010}), it minimizes $\nu$, i.e., $\nu(\one)=\min_{\ub\in\R^N}\nu(\ub)=0$, both implying perfect fairness. Hence, by construction, the fairness measure $h_N$ is Schur concave (see Theorem~3 in \citealp{Lan_et_al:2010}), and our convex fairness measure $\nu$ is Schur convex (Axiom~\ref{axiom:Schur_convex}).
\end{itemize}

Finally, \cite{Lan_et_al:2010} considers a slightly different set of axioms to define their class of relative fairness measures. In the following, we discuss similarities and differences between \cite{Lan_et_al:2010}'s axioms and some of those we use to define the class of convex fairness measures. (Recall that the class of convex fairness measures is defined by Axioms~\ref{axiom:continuity}, \ref{axiom:normalization}, \ref{axiom:symmetry}, \ref{axiom:trans_invariance}, \ref{axiom:pos_homo}, and \ref{axiom:convexity}, and its relative counterpart is defined by Axioms~\ref{axiom:normalization_rel}, \ref{axiom:symmetry}, \ref{axiom:Schur_convex}, and \ref{axiom:scale_invariance}.)
\begin{enumerate}[topsep=2mm,itemsep=0mm]
    \item \textit{Continuity.} This axiom is the same as Axiom~\ref{axiom:continuity}.
    
    \item \textit{Homogeneity.} This axiom is equivalent to scale invariance, i.e., Axiom~\ref{axiom:scale_invariance}, used in the definition of the relative convex fairness measure (see Definition~\ref{def:relative_convex_fairness_measure}).  As discussed in \ref{appdx:relative_convex_fairness_measures}, this axiom characterizes relative fairness measures and thus implies that \cite{Lan_et_al:2010}'s class of fairness measures $\{h_N\}_{N\in\N}$ is defined in the relative sense (see, e.g., Theorem~\ref{thm:Lan_et_al_rel_axiom_power}). In contrast, we define the class of convex fairness measures in the absolute sense and also derive its relative counterpart (which satisfies Axiom~\ref{axiom:scale_invariance}) in \ref{appdx:relative_convex_fairness_measures}. 
    
    \item \textit{Asymptotic Saturation.} As pointed out in \cite{Lan_et_al:2010}, this is a technical condition and not a known axiom for fairness measures. They adopt this condition to guarantee the uniqueness of the set of fairness measures defined in Theorem~\ref{thm:Lan_et_al_rel_axiom}. 
    
    \item \textit{Irrelevance of Partition.} This is another axiom specific for \cite{Lan_et_al:2010}'s class of fairness measures. It allows one to compute the value of the fairness measure of a high-dimensional outcome vector (i.e., with a large number of subjects) recursively from that of low-dimensional outcome vectors (i.e., with smaller numbers of subjects). As \cite{Chen_Hooker:2023} pointed out, it is still unclear ``how one might assess whether the rather abstract axiom of partition is appropriate for a particular practical application.''

    \item \textit{Monotonicity.} This axiom applies exclusively to the case where there are two subjects, i.e., $N=2$. Specifically, it indicates that $h_2$ decreases as the absolute difference $|u_1 - u_2|$ increases. In other words, a higher value of $|u_1 - u_2|$ implies a more unfair distribution of outcomes. Note that when there are two subjects, our convex fairness measure $\nu$ can be represented as $c|u_1 - u_2|$, where $c$ is a positive constant (see discussions after Theorem~\ref{thm:characterization_of_equivalence_CFM}). Hence, $\nu:\R^2\rightarrow\R_+$ is monotonoically increasing with $|u_1-u_2|$. It follows that $\nu(\ub)=c|u_1-u_2|$ satisfies the notion of monotonicity. 
\end{enumerate}

\section{Envy-Based Fairness Measures} \label{appdx:envy_based}

In this section, we analyze a general class of envy-based fairness measures and derive conditions under which a subset of them belongs to the class of convex fairness measures. Without loss of generality, we consider $u_i$ as the positive impact on subject $i \in [N]$, i.e., a larger value of $u_i$ is preferred. 

We first introduce a general and classical class of envy-based fairness measures widely adopted in the literature \citep{Aleksandrov_et_al:2019, Boiney:1995, Feldman_Kirman:1974, Tan_et_al:2023}.  Let $V_i:\R\rightarrow\R$ be the utility function of individual $i\in[N]$. That is, $V_i(y)\in\R$ is the utility or value that individual $i$ receives from a given impact $y\in\R$. We define the envy that  $i$ has towards $j$ for a given impact vector $\ub \in \R^N$ as
$$e_{ij}(\ub):=\big(V_i(u_j)-V_i(u_i)\big)_+=\begin{cases} 0, &\text{ if } V_i(u_i)\geq V_i(u_j), \\ V_i(u_j)-V_i(u_i), &\text{ if } V_i(u_i) < V_i(u_j). \end{cases}$$
Then, we define the envy-based fairness measure $E:\R^N\rightarrow\R$ as the total envy:
\begin{equation} \label{eqn:total_envy}
    E(\ub)=\sum_{i=1}^N \sum_{j=1}^N e_{ij}(\ub)=\sum_{i=1}^N \sum_{j=1}^N \big(V_i(u_j)-V_i(u_i)\big)_+.
\end{equation}

In Theorem~\ref{thm:envy_CFM}, we identify a set of envy-based fairness measures of the form \eqref{eqn:total_envy} that belong to the class of convex fairness measures.

\begin{theorem} \label{thm:envy_CFM}
Assume that the utility function $V_i:\R\rightarrow\R$ is continuous and non-decreasing for all $i\in[N]$. The total envy $E:\R^N\rightarrow\R$ defined in \eqref{eqn:total_envy} is a convex fairness measure if and only if $V_i(y)=cy+V_i(0)$ for some $c>0$ and for all  $i\in[N]$, i.e., 
\begin{equation} \label{eqn:total_envy_CFM}
E(\ub)=c\sum_{i=1}^N \sum_{j=1}^N \big(u_j-u_i\big)_+ = c\sum_{i=1}^N \sum_{j=i+1}^N \big|u_j-u_i\big|.
\end{equation}
\end{theorem}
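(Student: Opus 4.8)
The plan is to prove both implications; the ``if'' direction is short and the ``only if'' direction carries the work. For sufficiency, suppose $V_i(y)=cy+V(0)$ with $c>0$ for every $i$. Then the intercept cancels inside each positive part, so $(V_i(u_j)-V_i(u_i))_+=c(u_j-u_i)_+$, and summing over the ordered pairs I would combine the terms for $\{i,j\}$ via $(u_j-u_i)_++(u_i-u_j)_+=|u_i-u_j|$ to obtain $E(\ub)=c\sum_{i<j}|u_i-u_j|$, which is exactly \eqref{eqn:total_envy_CFM} and equals $\tfrac{c}{2}$ times the Gini deviation (measure (ii) of Table~\ref{table:existing_FM}). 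Since Proposition~\ref{prop:existing_FM} shows measure (ii) is a convex fairness measure and all six axioms in Definition~\ref{def:absolute_convex_fairness_measures} are preserved under multiplication by a positive constant, $E$ is a convex fairness measure.

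For necessity the idea is to probe $E$ with single-spike test vectors and extract the form of the $V_i$ from Axioms~\ref{axiom:normalization}, \ref{axiom:symmetry}, and \ref{axiom:trans_invariance} together with continuity. Fix an index $a$ and $t>0$, and let $\ub$ have $u_a=t$ and $u_i=0$ for $i\ne a$; using only that each $V_i$ is non-decreasing, a direct evaluation shows $E(\ub)=\sum_{i\ne a}\bigl(V_i(t)-V_i(0)\bigr)$, since only the ``envy toward slot $a$'' terms survive. Applying Axiom~\ref{axiom:symmetry} with the transposition swapping slots $a$ and $b$ forces $V_a(t)-V_a(0)=V_b(t)-V_b(0)$ for all $t>0$, so there is a common centered utility $\psi(t):=V_i(t)-V_i(0)$ independent of $i$, with $\psi(0)=0$ and $\psi$ non-decreasing. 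Feeding the translated vector $\ub+\alpha\one$ through the same computation gives $E(\ub+\alpha\one)=(N-1)\bigl(\psi(t+\alpha)-\psi(\alpha)\bigr)$, so Axiom~\ref{axiom:trans_invariance} yields $\psi(t+\alpha)-\psi(\alpha)=\psi(t)$ for all $t>0$ and $\alpha\in\R$. This is Cauchy's functional equation; with $\psi$ continuous (inherited from the $V_i$) its only solutions are $\psi(t)=ct$, and monotonicity gives $c\ge0$. Finally, Axiom~\ref{axiom:normalization} rules out $c=0$ (which would make $E\equiv0$, contradicting that $E$ vanishes only on multiples of $\one$), leaving $c>0$; hence $V_i(y)=cy+V_i(0)$ for all $i$, and since the intercepts do not affect $E$ they may be taken equal to a common $V(0)$.

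The main obstacle is this necessity direction, and within it the functional-equation extraction: one must evaluate $E$ on the test vectors carefully, respecting the positive part and the non-decreasing assumption so that the correct terms vanish, and then solve Cauchy's equation on all of $\R$ from data supplied only for $t>0$, which I would handle by first deriving additivity on $(0,\infty)$ and oddness of $\psi$ before invoking continuity. It is worth noting that neither convexity (Axiom~\ref{axiom:convexity}) nor positive homogeneity (Axiom~\ref{axiom:pos_homo}) is needed here: Axioms~\ref{axiom:normalization}, \ref{axiom:symmetry}, \ref{axiom:trans_invariance} and continuity already pin down the affine form, and we implicitly use $N\ge2$ so that the sums $\sum_{i\ne a}$ are nonempty.
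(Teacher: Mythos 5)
Your sufficiency argument coincides with the paper's (both reduce $E$ to $\tfrac{c}{2}$ times the Gini deviation via the identity $(a)_++(-a)_+=|a|$), and your necessity strategy---symmetry first, then translation invariance, then a Cauchy-type functional equation---follows the same skeleton as the paper's, but on different test vectors: you use single spikes $(0,\dots,t,\dots,0)$, while the paper uses the two-parameter family $(a,b,b,\dots,b)$. That choice creates a genuine gap in your write-up. The spike-plus-symmetry step establishes that $V_i(t)-V_i(0)$ is independent of $i$ only for $t>0$, so your $\psi$ is well defined only on $[0,\infty)$; yet the very next display, $E(\ub+\alpha\one)=(N-1)\bigl(\psi(t+\alpha)-\psi(\alpha)\bigr)$, evaluates $\psi$ at $\alpha$ and $t+\alpha$, which can be negative. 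At that point in the argument there is no single function there to speak of---only the uncoupled increments $V_i(t+\alpha)-V_i(\alpha)$---so the equation $\psi(t+\alpha)-\psi(\alpha)=\psi(t)$ is not yet meaningful for $\alpha<0$. Your proposed repair (``additivity on $(0,\infty)$ and oddness of $\psi$'') does not close this, because even stating oddness presupposes that $\psi$ is defined on the negatives, which is exactly what is missing. And the theorem genuinely requires control of each $V_i$ on all of $\R$: a utility equal to $cy$ for $y\ge 0$ and $2cy$ for $y<0$ is continuous, non-decreasing, and agrees with your conclusion on $[0,\infty)$, so determining $\psi$ only on the half-line cannot finish the proof.

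The gap is repairable inside your own framework, in either of two ways. First, run the spike computation for $t<0$ as well: there the surviving envy terms flip, giving $E(\ub)=(N-1)\bigl(V_a(0)-V_a(t)\bigr)$, and symmetry across spike positions again yields independence of $i$, so $\psi$ extends to all of $\R$; oddness then follows from TI by taking $\alpha=-t$. Second, apply TI to translated spikes at two different positions $a$ and $b$ and subtract the two resulting identities; this gives $V_a(t+\alpha)-V_a(\alpha)=V_b(t+\alpha)-V_b(\alpha)$ for all $\alpha\in\R$ and $t>0$, which is the needed independence of all increments. The paper sidesteps the issue entirely: its Step 1 runs the symmetry case analysis for every $(a,b)\in\R^2$ at once, producing a common $V$ (up to additive constants) on all of $\R$, and its Step 2 derives the generalized Cauchy equation $V(a-b)=V(a)-V(b)+V(0)$ on all of $\R^2$, solved by continuity. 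Your closing observation---that neither Axiom~\ref{axiom:convexity} nor Axiom~\ref{axiom:pos_homo} is needed in the necessity direction---is correct and matches the paper, whose proof likewise uses only Axioms~\ref{axiom:normalization}, \ref{axiom:symmetry}, \ref{axiom:trans_invariance}, continuity, and monotonicity of the $V_i$.
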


\begin{proof}
First, suppose that $V_i(y)=cy+V(0)$. We show that $E(\ub)$ defined in \eqref{eqn:total_envy_CFM} is equivalent to the Gini deviation $\phi^{\mbox{\tiny Gini}}(\ub)=\sum_{i=1}^N \sum_{j=1}^N \big|u_j-u_i\big|$ and thus is a convex fairness measure. From the definition of $E(\ub)$, we have
\begin{align}
  E(\ub)=c\sum_{i=1}^N \sum_{j=1}^N \big(u_j-u_i\big)_+ &=c\sum_{i=1}^N \sum_{j=1}^{i-1} \big(u_j-u_i\big)_+  + c\sum_{i=1}^N \sum_{j=i+1}^{N} \big(u_j-u_i\big)_+  \nonumber \\
  &=c\sum_{j=1}^N \sum_{i=j+1}^{N} \big(u_j-u_i\big)_+  + c\sum_{i=1}^N \sum_{j=i+1}^{N} \big(u_j-u_i\big)_+  \nonumber \\
  &= c\sum_{i=1}^N \sum_{j=i+1}^N  \Big[\big(u_i-u_j\big)_+ + \big(u_j-u_i\big)_+ \Big] \nonumber\\
  &= c\sum_{i=1}^N \sum_{j=i+1}^N \big|u_j-u_i\big|  \label{eqn_pf:thm:envy_CFM_0} \\
  &= \frac{c}{2} \sum_{i=1}^N \sum_{j=1}^N \big|u_j-u_i\big| = \frac{c}{2}\phi^{\mbox{\tiny Gini}}(\ub), \nonumber 
\end{align}
where \eqref{eqn_pf:thm:envy_CFM_0} follows from the equality $(a)_++(-a)_+=|a|$ for any $a\in\R$. It follows that $E(\ub)$ in \eqref{eqn:total_envy_CFM} is a convex fairness measure.

Now, suppose that $E$ defined in \eqref{eqn:total_envy} is a convex fairness measure. We will show that $V_i(y)=cy+V(0)$ for some $c\in\R$ and for all $i\in[N]$ in two steps.

\textit{Step 1.} Consider any $(a,b)\in\R^2$. Let $\ub=(a,b,b,\dots,b)^\tp\in\R^N$ and its permutation $\ub'=(b,a,b,\dots,b)^\tp\in\R^N$. By Axiom~\ref{axiom:symmetry}, we have $E(\ub)=E(\ub')$. This implies that
\begin{equation} \label{eqn_pf:thm:envy_CFM_1}
    (N-1)\cdot \Big(V_1(b)-V_1(a)\Big)_+ +  \Big(V_2(a)-V_2(b)\Big)_+ = (N-1)\cdot \Big(V_2(b)-V_2(a)\Big)_+ +  \Big(V_1(a)-V_1(b)\Big)_+ 
\end{equation}
for any $(a,b)\in\R^2$. Consider the following three cases.
\begin{itemize}
    \item Suppose that $V_1(b)-V_1(a)>0$. Note that if $V_2(a)-V_2(b)>0$, the left-hand-side of \eqref{eqn_pf:thm:envy_CFM_1} is strictly greater than zero while the right-hand side of \eqref{eqn_pf:thm:envy_CFM_1} equals zero, which is a contradiction. Therefore, we must have $V_2(a)-V_2(b)\leq 0$. It then follows from \eqref{eqn_pf:thm:envy_CFM_1} that $V_1(b)-V_1(a)=V_2(b)-V_2(a)$.

    \item Suppose that $V_1(b)-V_1(a)<0$. Note that if $V_2(a)-V_2(b)<0$, the left-hand-side of \eqref{eqn_pf:thm:envy_CFM_1} equals zero while the right-hand side of \eqref{eqn_pf:thm:envy_CFM_1} is strictly greater than zero, which is a contradiction. Therefore, we must have $V_2(a)-V_2(b)\geq 0$. It then follows from \eqref{eqn_pf:thm:envy_CFM_1} that $V_2(a)-V_2(b)=V_1(a)-V_1(b)$.

    \item Suppose that $V_1(b)-V_1(a)=0$. It then follows from \eqref{eqn_pf:thm:envy_CFM_1} that $ \big(V_2(a)-V_2(b)\big)_+ = (N-1)\cdot \big(V_2(b)-V_2(a)\big)_+$, which implies $V_2(a)-V_2(b)=0$. Thus, we also have $V_2(a)-V_2(b)=V_1(a)-V_1(b)$.
\end{itemize}
Combining the three cases, we have that \eqref{eqn_pf:thm:envy_CFM_1} implies $V_1(b)-V_1(a)=V_2(b)-V_2(a)$, or equivalently, $\big(V_1-V_2\big)(b)=\big(V_1-V_2\big)(a)$ for any $(a,b)\in\R^2$. This shows that $V_1-V_2$ is a constant function, i.e., $V_1-V_2\equiv \tau_{12}$ for some $\tau_{12}\in\R$. Repeating the same argument, one can show that for any $i\in[N]$ and $j\in[N]\setminus\{i\}$, we have $V_i-V_j\equiv\tau_{ij}$ for some $\tau_{ij}\in\R$. That is, we have $V_i\equiv V + \tau_i$ for some function $V:\R\rightarrow\R$ and $\tau_i\in\R$ for all $i\in[N]$. Note that since $V_i$ is non-decreasing by assumption, $V$ is also non-decreasing. We can now write the fairness measure $E$ in \eqref{eqn:total_envy} as
\begin{align}
    E(\ub)&=\sum_{i=1}^N\sum_{j=1}^N\big(V(u_j)-V(u_i)\big)_+ \nonumber\\
    &=\sum_{i=1}^N\sum_{j=1}^{i-1}\big(V(u_j)-V(u_i)\big)_+ + \sum_{i=1}^N\sum_{j=i+1}^N\big(V(u_j)-V(u_i)\big)_+ \label{eqn_pf:thm:envy_CFM_2}\\
    &=\sum_{i=1}^N\sum_{j=i+1}^N \big|V(u_i)-V(u_j)\big|, \label{eqn_pf:thm:envy_CFM_3}
\end{align}
where \eqref{eqn_pf:thm:envy_CFM_2} follows from the equality $(a)_++(-a)_+=|a|$ for any $a\in\R$.

\textit{Step 2.} Consider any $(a,b)\in\R^2$. Let $\ub=(a,b,b,\dots,b)^\tp\in\R^N$. From Axiom~\ref{axiom:trans_invariance}, we have $E(\ub+\alpha\one)=E(\ub)$ for any $\alpha\in\R$. It follows from \eqref{eqn_pf:thm:envy_CFM_3} that
\begin{equation} \label{eqn_pf:thm:envy_CFM_5}
    \big|V(a+\alpha)-V(b+\alpha)\big| = \big|V(a)-V(b)\big| 
\end{equation}
for any $(a,b)\in\R^2$ and $\alpha\in\R$. By setting $\alpha=-b$, \eqref{eqn_pf:thm:envy_CFM_5} implies that
\begin{equation} \label{eqn_pf:thm:envy_CFM_6}
    \big|V(a-b)-V(0)\big| = \big|V(a)-V(b)\big| 
\end{equation}
for any $(a,b)\in\R^2$. Consider the following two cases.
\begin{itemize}
    \item Suppose that $a\leq b$, or equivalently, $a-b\leq 0$. Since $V$ is non-decreasing, we have $V(a)\leq V(b)$ and $V(a-b)\leq V(0)$. Therefore, \eqref{eqn_pf:thm:envy_CFM_6} reduces to $V(0)-V(a-b)=V(b)-V(a)$.

    \item Suppose that $a>b$, or equivalently, $a-b>0$. Since $V$ is non-decreasing, we have $V(a)\geq V(b)$ and $V(a-b)\geq V(0)$. Therefore, \eqref{eqn_pf:thm:envy_CFM_6} reduces to $V(a-b)-V(0)=V(a)-V(b)$.
\end{itemize}
Combining the two cases, we have that \eqref{eqn_pf:thm:envy_CFM_6} implies $V(a-b)=V(a)-V(b)+V(0)$ for any $(a,b)\in\R^2$. Note that this is a one-dimensional generalized Cauchy functional equation. Since $V$ is continuous, it follows from Corollary~2.4.1 of \cite{Castillo_Ruiz-Cobo:1992} that $V(y)=cy+V(0)$ for some $c\in\R$, and thus $V_i(y)=V(y)+\tau_i=cy+V(0)+\tau_i$ for all $i\in[N]$. This shows that $V_i(y)=cy+V_i(0)$ for all $i\in[N]$. Finally, since $V_i$ is non-decreasing, we have $c\geq 0$. Plugging $V_i(y)=cy+V_i(0)$ for all $i\in[N]$ in \eqref{eqn:total_envy}, we obtain $E(\ub)=c\sum_{i=1}^N \sum_{j=i+1}^N \big|u_j-u_i\big|$ as in \eqref{eqn:total_envy_CFM}. By Axiom~\ref{axiom:normalization}, $E(\ub)=0$ if and only if $\ub=\alpha\one$ for any $\alpha\in\R$, which implies that the parameter $c$ should be strictly greater than zero. This completes the proof.
\end{proof}

We remark the following on the utility function $V_i$. First, the assumption that $V_i$ is non-decreasing is mild. In particular, it is reasonable and common to expect that the utility individual $i$ receives, denoted as $V_i(y)$, is non-decreasing with the (positive) impact $y$, i.e., $V_i(y_1)\geq V_i(y_2)$ if $y_1\geq y_2$. Second, several studies have adopted affine utility functions in optimization models of the form~\eqref{prob:eff_min} (see, e.g., \citealp{Blanco_et_al:2024, Chanta_et_al:2011, Chanta_et_al:2014, Espejo_et_al:2009}). In such settings, the function $U$ can be interpreted as the utility function, i.e., $\ub=U(\xb)$ computes the utility of the subjects. 
Finally, we note that for the envy-based fairness measure, $E(\ub)$, to be a convex fairness measure and satisfy Axiom~\ref{axiom:symmetry}---which stipulates that the perceived degree of fairness should not depend on the identity of individuals---the proportional constant $c$ in the utility function $V_i(y)=cy+V_i(0)$ must be identical for all $i\in[N]$. That is, the additional utility or value an individual receives due to a unit increase in $y$ is the same for all individuals.

\section{Fairness-Promoting Optimization Models Based on the Rawlsian Principle} \label{appdx:Rawlsian}

In this section, we discuss how our unified framework can be leveraged to analyze fairness-promoting problems incorporating the Rawlsian principle \citep{Chen_Hooker:2023, Karsu_Morton:2015, Shehadeh_Snyder:2021}. Let us consider the following Rawlsian-based fairness-promoting optimization model:     
\begin{equation} \label{eqn:Rawlsian}
    \max_{\xb,\,\ub}\bigg\{ \min_{i\in[N]} \{u_i\} \,\,\bigg|\,\, \ub=U(\xb),\,\xb\in\calX\bigg\}.
\end{equation}%
Formulation \eqref{eqn:Rawlsian} finds $\xb$ and $\ub$ that maximize the minimum (positive) outcome across the $N$ subjects. Note that we can equivalently rewrite \eqref{eqn:Rawlsian} as follows:
\begin{equation} \label{eqn:Rawlsian_min}
    \min_{\xb,\,\ub}\bigg\{ -\min_{i\in[N]} \{u_i\} \,\,\bigg|\,\, \ub=U(\xb),\,\xb\in\calX\bigg\}=\min_{\xb,\,\ub}\bigg\{ \max_{i\in[N]} \{-u_i\} \,\,\bigg|\,\, \ub=U(\xb),\,\xb\in\calX\bigg\}.
\end{equation}%
Letting $\ubar=N^{-1} (\one^\tp\ub)$ be the mean of $\ub$, we can rewrite the objective function in \eqref{eqn:Rawlsian_min} as
\begin{equation*} 
    \max_{i\in[N]} \{-u_i\} =  - \ubar + \max_{i\in[N]} \{-u_i+\ubar\} = f(\ub) + \nu(\ub),
\end{equation*}%
where $f(\ub)=-\ubar$ and 
\begin{equation} \label{eqn:Rawlsian_fairness_measure}
     \nu(\ub)=\max_{i\in[N]} \{-u_i+\ubar\}=\ubar - \min_{i\in[N]}\{u_i\}.
\end{equation}%
Note that minimizing $f(\ub)=-\ubar$ is equivalent to maximizing the average outcome. Thus, $f(\ub)$ represents an efficiency measure. It is straightforward to verify that $\nu$ in \eqref{eqn:Rawlsian_fairness_measure} satisfies Axioms C, N, S, TI, PH, and CV.  Thus, $\nu$ is a convex fairness measure (see Definition~2). Therefore, we can rewrite the objective of the Rawlsian optimization model in the form \eqref{eqn:Rawlsian_min} as the sum of an efficiency measure $f$ and a convex fairness measure $\nu$. Hence, one can use our proposed unified framework for solving and analyzing the Rawlsian optimization problem of the form \eqref{eqn:Rawlsian}.

\newpage
\bibliographystyle{elsarticle-harv}
\bibliography{references}


\end{document}